\newcommand\psef{\operatorname{pseudo-effective}}
\begin{document}

\title{Mori fibre spaces for $3$-folds over imperfect fields}
\author{Joe Waldron}
\address{Department of Mathematics, Michigan State University, East Lansing, MI 48824, USA}
\email{waldro51@msu.edu}

\begin{abstract}
We prove that the LMMP works for projective threefolds over function fields of characteristic $p>5$ when the canonical divisor is not pseudo-effective.  In the process we show that ACC for log canonical thresholds holds in complete generality for threefolds in residue characteristic $p>5$. 
\end{abstract}

\maketitle
\tableofcontents

\section{Introduction}

Over the last decade, the Log Minimal Model Program (LMMP) has been nearly completed in great generality for klt pairs of dimension three in positive and mixed characteristic $p>5$.   Firstly it was shown to hold for three dimensional varieties which are projective over an algebraically closed field \cite{hacon_three_2015, cascini_base_2015, birkar_existence_2016, birkar_existence_2017}, then extended to perfect ground fields \cite{gongyo_rational_2015-1}, and to varieties over $F$-finite ground fields for which $K_X+B$ is pseudo-effective \cite{das_waldron_imperfect}.  Most recently it was shown to hold over any finite dimensional excellent scheme with dualising complex, so long as the image of $X$ in the base has positive dimension \cite{bhatt_globally, takamatsu_yoshikawa}.   This paper aims to complete the picture: to deal with the case of klt pairs $(X,B)$ which are projective over an imperfect field and such that $K_X+B$ is not pseudo-effective.  Following this paper, the LMMP is essentially complete for klt schemes of dimension $3$ in residue characteristic $p>5$.

Most of our effort will be directed towards proving:

\begin{theorem}[Base point free theorem]\label{thm:bpf}
Let $(X,B)$ be a $\mathbb{Q}$-factorial three dimensional klt pair, projective over a field of characteristic $p>5$, and let $L$ be a nef $\mathbb{R}$-divisor such that $L-(K_X+B)$ is nef and big.  Then $L$ is semi-ample.
\end{theorem}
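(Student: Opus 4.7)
The plan is to follow the standard characteristic-$p$ strategy built on Keel's base point free theorem, together with an Iitaka fibration argument in the non-big case. First I would reduce from $\mathbb{R}$-divisors to $\mathbb{Q}$-divisors via a standard perturbation, using nefness and bigness of $L-(K_X+B)$ to absorb small errors, so that one may assume $L$ is $\mathbb{Q}$-Cartier.

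I would then split into cases according to whether $L$ is big. If $L$ is big, Keel's theorem reduces semi-ampleness of $L$ to semi-ampleness of $L|_{\mathbb{E}(L)}$, and since $L$ is big every component $Z$ of the exceptional locus has $\dim Z\le 2$. Choosing a general $D\sim_{\mathbb{Q}} L-(K_X+B)$ and rescaling so that $Z$ becomes an lc centre of $(X,B+D)$, adjunction equips $Z$ with a klt log structure inheriting the numerical positivity, and the surface base point free theorem over imperfect fields concludes this case.

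When $L$ is not big, $\mathbb{E}(L)=X$ and Keel's reduction is vacuous, so one must instead produce a fibration intrinsically. After passing to a sufficiently divisible multiple I would take the Iitaka fibration $f\colon X\dashrightarrow Y$ of $L$; the hypothesis that $L-(K_X+B)$ is big forces $-(K_X+B)|_F$ to be big on a general fibre $F$, so the MMP results already available in the excerpt apply on $F$ and yield semi-ampleness of $L|_F$. A canonical bundle formula then pushes $L$ down to a nef $\mathbb{R}$-divisor $L_Y$ on a resolution of $Y$ with $L_Y-(K_Y+B_Y)$ nef and big, reducing to lower dimension and iterating to surfaces or curves over imperfect fields where the statement is known.

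The main obstacle is that the entire argument must be carried out over imperfect fields, where components of $\mathbb{E}(L)$ are only regular rather than smooth and the generic fibres of the Iitaka fibration can be non-normal, non-reduced, or display wild inseparable behaviour. Establishing subadjunction, the canonical bundle formula and an MMP in this setting that is robust enough to control singularities under these reductions is precisely why the paper must first prove the ACC for log canonical thresholds in complete generality in residue characteristic $p>5$.
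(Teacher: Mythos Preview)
Your outline has a genuine gap in the non-big case, and it also misidentifies where ACC enters.

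When $L$ is not big you propose to take the Iitaka fibration of $L$ and descend via a canonical bundle formula. But the Iitaka fibration is only useful if $\kappa(L)>0$, and a priori a nef $\mathbb{Q}$-divisor can have $\kappa(L)\in\{-\infty,0\}$; nothing in your argument rules this out. Establishing that $\kappa(L)$ is as large as it should be (namely equal to the nef dimension $n(L)$) is precisely the hard content of the paper, not a preliminary step one can assume. The paper therefore does \emph{not} start from the Iitaka fibration. Instead it takes the nef reduction map $f:X\dashrightarrow Z$ (which always exists for a nef divisor), shows via Kawamata's lemma that $\phi^*L\sim_{\mathbb{Q}} h^*D$ for some nef $D$ on $Z$, and then proves the ``goodness'' theorem: $D$ is \emph{big}. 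Only after this can one apply Keel's theorem on $Z$ (now a surface) and finish. Your proposed canonical bundle formula reduction, even if one granted $\kappa(L)>0$, is also not available in this generality over imperfect fields.

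Relatedly, you locate the role of ACC in making subadjunction and the canonical bundle formula robust over imperfect fields. That is not how the paper uses it. ACC for log canonical thresholds and global ACC for surfaces are used inside the proof of goodness: one runs an MMP with perturbed boundaries $B-\epsilon H+\delta A$, passes to the generic fibre of a new nef reduction map, and derives a contradiction by producing an infinite strictly increasing sequence of thresholds (respectively, coefficients) as $\epsilon\to 0$. So ACC is the mechanism that forces $\kappa(L)=n(L)$, not a technical tool for adjunction. Your big case via Keel on $X$ is essentially the known argument from \cite{das_waldron_imperfect} and is fine, but the heart of the theorem is the non-big case, and there your plan does not supply the missing step.
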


This is the main ingredient needed for the existence of Mori fibre spaces for $3$-folds over imperfect fields of characteristic $p>5$:

\begin{theorem}[Existence of Mori fibre spaces]\label{thm:mfs}
Let $(X,B)$ be a three dimensional $\mathbb{Q}$-factorial klt pair, projective over a field of characteristic $p>5$.  Then we can run a $(K_X+B)$-MMP which terminates with either Mori fibre space $f:Y\to Z$ or a log minimal model.
\end{theorem}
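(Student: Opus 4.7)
The plan is to run a $(K_X+B)$-MMP with scaling of a sufficiently general ample $\mathbb{R}$-divisor $A$, using Theorem \ref{thm:bpf} as the engine for producing extremal contractions, and to conclude that the process must terminate with a fibration because $K_X+B$ is not pseudo-effective.

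To start, I would choose an ample $\mathbb{R}$-divisor $A$ such that $(X,B+A)$ is klt and $K_X+B+A$ is nef. At each stage of the MMP set $\lambda_i=\inf\{t\ge 0\mid K_{X_i}+B_i+tA_i\text{ is nef}\}$ and consider $L_i:=K_{X_i}+B_i+\lambda_i A_i$, which is nef with $L_i-(K_{X_i}+B_i)=\lambda_i A_i$ nef and big (note $\lambda_i>0$, since $K_{X_i}+B_i$ is not pseudo-effective throughout the MMP). Theorem \ref{thm:bpf} then makes $L_i$ semi-ample, and after a small perturbation of $A$ arranging that the face of the Mori cone cut out by $L_i$ is a single ray $R_i$, the Stein factorisation of the associated morphism contracts exactly $R_i$.

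I would then iterate in the standard way: if the contraction has target of strictly smaller dimension, it is (up to a further extremal factorisation) a Mori fibre space and we stop; if it is divisorial, replace $(X,B)$ by its image and continue; if it is flipping, invoke the existence of $3$-fold flips over $F$-finite fields of characteristic $p>5$, available via \cite{das_waldron_imperfect} and the references therein, and continue. Termination follows from special termination combined with the ACC for log canonical thresholds established in this paper. Because $K_X+B$ is not pseudo-effective, no step in this sequence can make it nef, so the process must end with a Mori fibre contraction.

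The main obstacle is deriving full cone and contraction theorems for klt $3$-folds over imperfect fields from Theorem \ref{thm:bpf}: one must know that $(K_X+B)$-negative extremal rays are locally discrete with bounded length (so that the perturbation of $A$ isolating a single ray is legitimate), and that the contraction produced by BPF has the expected local structure. Over imperfect fields the usual bend-and-break and length bounds are delicate because of non-reduced geometric fibres and the behaviour of the canonical divisor under base change to the perfect closure, and this is where most of the technical work will sit; termination with scaling, once contractions and flips are in place, should then be comparatively routine.
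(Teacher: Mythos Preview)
The overall shape of your argument matches the paper: run an MMP with scaling of an ample divisor, use \autoref{thm:bpf} (packaged as a contraction theorem) to produce each extremal contraction, and use non-pseudo-effectivity to conclude that the output is a Mori fibre space. The cone-theoretic concerns you raise at the end (discreteness of rays, length bounds over imperfect fields) are already handled by \autoref{thm:cone}, and the contraction of extremal rays is \autoref{thm:contraction}; these are not where the difficulty lies.

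The genuine gap is in your termination step. Your assertion that ``termination follows from special termination combined with the ACC for log canonical thresholds'' does not apply here. That combination (\autoref{cor:lc_termination}, following \cite{birkar_acc_2005}) proves termination of sequences of flips which are simultaneously $M$-flips for some fixed \emph{effective} divisor $M\geq 0$. In a $(K_X+B)$-MMP with scaling of $A$, each flipping ray $R$ satisfies $(K_X+B)\cdot R<0$ and $A\cdot R>0$, so the steps are $A$-positive rather than $M$-negative for any effective $M$; the ACC argument simply does not engage. Since $K_X+B$ is not pseudo-effective, there is also no way to produce such an $M$ from $K_X+B$ itself.

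The paper closes this gap by a different route. It first replaces $B$ by $B+\epsilon A$ for a small $\epsilon>0$ chosen so that $(X,B+\epsilon A)$ is still klt and $K_X+B+\epsilon A$ is still not pseudo-effective, but now the boundary is \emph{big}. Termination with scaling for klt threefold pairs with big boundary is then supplied by \autoref{thm:termination}, whose proof goes through finiteness of log minimal models (\autoref{thm:finiteness_of_models}) and the Shokurov polytope results of \autoref{prop:polytopes}, not through ACC. The resulting $(K_X+B+\epsilon A)$-Mori fibre space is automatically a $(K_X+B)$-Mori fibre space. So the idea you are missing is the ``make the boundary big'' trick together with the finiteness-of-models route to termination; once you insert that, your outline becomes essentially the paper's proof.
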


These results were proved over algebraically closed fields in \cite{birkar_existence_2017}.  Note that our results are not stated in a form which includes the relative situation (as in \cite{birkar_existence_2017}), since that is already covered by \cite{bhatt_globally}, and indeed will be used here in several places.  Note that in order to obtain a log minimal model or Mori fiber space for a non-$\mathbb{Q}$-factorial pair, one can take a small $\mathbb{Q}$-factorialization using \cite[Corollary 9.21]{bhatt_globally} and run the MMP from there using the above statement. 

These results are motivated by the possibility of running a terminating LMMP on the generic fibres of contractions of relative dimension $3$, which has potential applications to the LMMP in higher dimensions.  The following is an immediate consequence in dimension $4$ which follows from the work of Xie and Xue \cite{xie_xue}:

\begin{corollary}\cite[Theorem 1.1]{xie_xue}
	Let $R$ be an excellent discrete valuation ring of characteristic $p>5$ with perfect residue field, and let $s\in\Spec(R)$ be the closed point.  		Assume also that log resolutions given by a sequence of blowups in the non-snc locus exist for schemes of dimension $4$ which are of finite type over $R$.
	
	Let $(X,\Delta)$ be a $\mathbb{Q}$-factorial dlt pair of dimension $4$, projective over $\Spec(R)$ such that $\Supp(X_s)\subseteq \rddown{\Delta}$.  
	Then we can run a $(K_X+\Delta)$-MMP over $\Spec(R)$ with scaling of an ample divisor $A$, and it terminates. 
\end{corollary}

As stated by Xie-Xue (\cite[Comment before Theorem 1.2]{xie_xue}), the only reason that this case of their theorem was excluded from \cite{xie_xue} was the lack of the threefold LMMP over the generic point of $\Spec(R)$.

In the course of our argument we prove several ascending chain condition (ACC) results which are interesting in their own right.  These results are tightly linked via the local-global principle, as was the case in Hacon-M\textsuperscript{c}Kernan-Xu's proof of ACC for log canonical thresholds in characteristic zero \cite{hacon_acc_2014}.
Note that the corresponding result for $X$ of dimension $1$ is \cite[Proposition 5.4]{das_boundedness}.

\begin{theorem}[Global ACC for surfaces]\label{thm:global_acc}
Let $\Lambda\subset [0,1]$ be a DCC set of real numbers.  Then there is a finite subset $\Gamma\subset\Lambda$ with the following property:
Let $(X,B)$ be a pair projective over a field $K$ of 
 characteristic not equal to $2$, $3$ or $5$ such that 
\begin{itemize}
\item $\dim(X)=2$
\item $(X,B)$ is dlt
\item The coefficients of $B$ are in $\Lambda$ 
\item $K_X+B\equiv_Z 0$ 
\end{itemize}
Then the coefficient of each component of $B$ is in $\Gamma$.
\end{theorem}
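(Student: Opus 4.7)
The plan is to adapt the Alexeev--Shokurov strategy for numerically trivial surface log pairs: reduce the problem to a bounded family via the MMP, then combine boundedness with the DCC hypothesis to extract a finite set $\Gamma$ of possible coefficients.

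First I would simplify $(X,B)$ using the surface MMP. Since $K_X+B \equiv_Z 0$ and $B$ is big, $-K_X$ is numerically equivalent to a big $\mathbb{R}$-divisor, so $K_X$ is not pseudo-effective over $Z$; a $K_X$-MMP can then be run, which for a surface is available in this generality by the classical extremal ray machinery (intersection theory on normal surfaces suffices, independently of the ground field). It terminates with a Mori fibre space $\pi\colon X'\to S$, where either $S$ is zero-dimensional over $K$ and $X'$ is a log del Pezzo of Picard number one, or $\pi$ is a conic bundle over a regular curve. Any divisor contracted by the MMP is a component of $B$ with coefficient in $\Lambda$, so once ACC is established for $(X',B')$, the original statement follows by applying the same (trivial) finiteness to the finitely many exceptional divisors. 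Thus we may assume $(X,B)=(X',B')$.

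Next I would invoke boundedness of log Fano-type surfaces in residue characteristic $p>5$. Since $(X,B)$ is dlt with $B$ big and $-K_X \equiv_Z B$ also big, $X$ is of Fano type, and dltness gives a uniform lower bound on log discrepancies. Standard surface arguments (going back to Alexeev in characteristic zero, with their characteristic $p>5$ analogues that avoid pathologies in residue characteristic $2,3,5$) then place $X$ in a bounded family. Moreover, within each component of this bounded family the components $D_j$ of $B$ vary in finitely many numerical classes in $N^1(X)$, since their intersection numbers against a fixed relatively ample divisor are bounded in terms of $-K_X$ and the DCC hypothesis.

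With boundedness in hand, the ACC conclusion becomes a rigidity argument. The relation $K_X + \sum_j b_j D_j \equiv_Z 0$ is a linear equation on the $b_j$ whose coefficients are intersection numbers determined by the (finitely many) choices of bounded data. A strictly increasing sequence of solutions with entries in $\Lambda$ would, after passing to a subsequence stabilising the bounded data, produce a non-zero real solution to the associated homogeneous system; this is impossible once the $D_j$ are numerically independent, and reducible to that case by passing to a basis of their span.

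The main obstacle I foresee is the imperfect residue field hypothesis. Base change $X\times_K \overline{K}$ may fail to be normal, and even on its normalisation a conductor divisor appears which interacts with $B$ and can damage the dlt condition. The assumption that $p\notin\{2,3,5\}$ is precisely what rules out the wild pathologies (non-reduced conic bundles, failure of Kodaira-type vanishing, breakdown of del Pezzo boundedness) that would otherwise obstruct the above plan; combined with the imperfect-field MMP and vanishing tools from \cite{das_waldron_imperfect} and \cite{bhatt_globally}, the geometric-to-arithmetic descent between $X$, $X\times_K \overline{K}$, and its normalisation can be carried out to complete the proof.
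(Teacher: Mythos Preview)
Your approach follows the classical Alexeev--Shokurov template, which works over algebraically closed fields, but there is a genuine gap: the boundedness step does not survive passage to imperfect ground fields in the form you need. You invoke ``boundedness of log Fano-type surfaces in residue characteristic $p>5$'' as though it were available uniformly over all fields, but the existing results (Alexeev, Birkar) are over algebraically closed fields. The natural reduction---base change to $\overline{K}$ and normalise---introduces a conductor divisor $C$ with coefficients divisible by $p-1$ that are \emph{not} drawn from $\Lambda$, and the resulting pair on the normalisation need not be dlt, so you cannot simply transport the problem upstairs. You acknowledge this obstacle in your final paragraph but do not actually resolve it; the tools you cite from \cite{das_waldron_imperfect} and \cite{bhatt_globally} do not supply the required uniform boundedness statement over imperfect fields.

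The paper sidesteps this entirely by going in the opposite direction: rather than enlarging the field, it \emph{spreads out} the surface $(X,B)$ over a variety $\mathcal{V}$ with perfect residue fields (so that $K$ is the function field of $\mathcal{V}$), obtaining a relative threefold pair $(\mathcal{X},\mathcal{B})\to\mathcal{V}$ to which the relative three-dimensional LMMP of \cite{bhatt_globally} applies. The argument then proceeds by induction on $\dim\mathcal{V}$ (equivalently, the transcendence degree of $K$), slicing by a general hyperplane, taking a dlt model over the resulting DVR, and applying adjunction to descend one dimension. The base case is the known result over perfect fields. This is a genuinely different architecture from yours: it trades two-dimensional boundedness over bad fields (which is unavailable) for three-dimensional relative MMP over good bases (which is available). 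A secondary issue: your claim that the coefficients of curves contracted by the $K_X$-MMP are handled by ``trivial finiteness'' is not justified---finitely many exceptional curves on a fixed $X$ says nothing uniform across all $X$.
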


\begin{theorem}[ACC for log canonical thresholds]\label{thm:acc_for_lct}
Suppose that $\Lambda\subset[0,1]$ and $\Gamma\subset \mathbb{R}$ are DCC sets.  
Let $\mathfrak{L}_3(\Lambda)$ denote the set of excellent log canonical pairs of dimension $3$ with residue characteristic not equal to $2$, $3$ or $5$, whose coefficients lie in $\Lambda$.  Let $\mathfrak{G}(\Gamma,X)$ denote the set of $\mathbb{Q}$-Cartier divisors on a fixed $X\in \mathfrak{L}_3(\Lambda)$ whose coefficients lie in $\Gamma$.

Then the set 
\[\{\mathrm{lct}(M,X,B)\mid (X,B)\in\mathfrak{L}_3(\Lambda),\ M\in\mathfrak{G}(\Gamma,X)\}\]
satisfies the ACC.
\end{theorem}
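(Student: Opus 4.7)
The plan is to follow the local-to-global strategy pioneered by Hacon--M\textsuperscript{c}Kernan--Xu and reduce Theorem~\ref{thm:acc_for_lct} to the two-dimensional Global ACC statement of Theorem~\ref{thm:global_acc}. Suppose for contradiction that there is a strictly increasing sequence $t_i = \mathrm{lct}(M_i, X_i, B_i)$ with $(X_i, B_i) \in \mathfrak{L}_3(\Lambda)$ and $M_i \in \mathfrak{G}(\Gamma, X_i)$. By definition of the log canonical threshold, for each $i$ the pair $(X_i, B_i + t_i M_i)$ is lc but not klt, so there is a prime divisor $E_i$ over some closed point $x_i \in X_i$ whose log discrepancy with respect to $(X_i, B_i + t_i M_i)$ is zero.

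The first step is to extract $E_i$ by a $\mathbb{Q}$-factorial dlt modification $\pi_i: Y_i \to X_i$ of $(X_i, B_i + t_i M_i)$; such modifications are available in residue characteristic $p > 5$ from the three-dimensional MMP machinery discussed in the introduction, combined with the results of this paper. Writing $K_{Y_i} + E_i + B_{Y_i} + t_i M_{Y_i} = \pi_i^*(K_{X_i} + B_i + t_i M_i)$, divisorial adjunction to the distinguished component $E_i$ produces a dlt pair $(E_i, B_{E_i})$ on the surface $E_i$. By Shokurov's formula for the different, the coefficients of $B_{E_i}$ lie in a DCC set determined by $\Lambda$, $\Gamma$ and the numbers $\{t_i\}$, and at least one coefficient tracks $t_i$ itself. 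Moreover, since the right-hand side of the equation above is $\pi_i$-trivial, $K_{E_i} + B_{E_i}$ is numerically trivial over $\pi_i(E_i)$.

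Next I would reduce to the case where $\pi_i(E_i)$ is a point, so that $E_i$ is projective over the residue field $\kappa(x_i)$. If $\dim \pi_i(E_i) > 0$, cutting by general hyperplane sections through $\pi_i(E_i)$ and adding the resulting components to the boundary with coefficient $1$ preserves the numerical triviality and the DCC property of the coefficients. After this reduction the pair $(E_i, B_{E_i})$ is a two-dimensional projective dlt pair over a field of residue characteristic $p > 5$ with $K_{E_i} + B_{E_i} \equiv 0$. Provided we can arrange that $B_{E_i}$ is big, Theorem~\ref{thm:global_acc} forces the coefficients of $B_{E_i}$ into a finite set; since these coefficients contain the values $t_i$ through the different formula, this contradicts the strict monotonicity of the sequence.

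The step I expect to be the main obstacle is arranging \emph{bigness} of the adjoint boundary $B_{E_i}$, which is a hypothesis of Theorem~\ref{thm:global_acc} and is not automatic from the construction. A natural approach is to exploit that $(X_i, B_i)$ is already lc, so the divisor $t_i M_{Y_i}$ contributes genuine positivity that restricts nontrivially to $E_i$; one may need a perturbation or further blow-up argument to convert this positivity into bigness of $B_{E_i}$. A secondary technical issue is ensuring that Shokurov's adjunction formula, and in particular the DCC-preservation property of coefficients of the different, is available in the imperfect residue field setting of characteristic $p > 5$; this likely requires invoking the adjunction results established earlier in this paper or in \cite{das_waldron_imperfect}.
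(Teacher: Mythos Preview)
Your overall strategy matches the paper's: take a $\mathbb{Q}$-factorial dlt modification, apply adjunction to an exceptional component dominating the non-klt centre, and feed the resulting lower-dimensional pair into Global ACC. Two points of execution differ. First, the log canonical place need not lie over a closed point; the paper first disposes of the case where a component of $M$ is itself a log canonical centre (then the equation $b+\lambda m=1$ with $b\in\Lambda$, $m\in\Gamma$ forces $\lambda=\frac{1-b}{m}$ into an ACC set directly), after which the remaining centre $Z$ has dimension at most $1$. Second, rather than cutting by hyperplanes---which would require Bertini-type input that is delicate in this generality, and in any case the singularities in $\mathfrak{L}_3(\Lambda)$ may live over arithmetic bases where hyperplane sections are unavailable---the paper simply \emph{restricts to the generic point of $Z$}. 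This produces a projective surface over the residue field (if $\dim Z=0$) or a projective curve over the function field $k(Z)$ (if $\dim Z=1$), to which Theorem~\ref{thm:global_acc} or its one-dimensional analogue \cite[Proposition~5.4]{das_boundedness} applies; this is precisely where Global ACC over imperfect fields is used, and it replaces your hyperplane step entirely. The paper also arranges that some component of $M_Y$ meets $F$ horizontally over $Z$, so that after restriction at least one adjunction coefficient genuinely involves $\lambda$; the finiteness conclusion then comes via \cite[Lemma~5.2]{hacon_acc_2014}. Your secondary concern about the adjunction formula is handled by Proposition~\ref{prop:adjunction_coefficients}, which holds in the required generality. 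As for bigness of the adjoint boundary, the paper simply asserts that the hypotheses of Theorem~\ref{thm:global_acc} hold after restriction and does not elaborate, so your instinct that this point deserves scrutiny is reasonable even if the paper does not treat it as the main obstacle.
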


Note that the singularities in question could be over a field or arithmetic base, and the sets in the statement do not depend on the base.  The analogues of these statements for threefolds over algebraically closed fields of characteristic $p>5$ were proved in \cite{birkar_existence_2016}.

ACC for log canonical thresholds has an immediate corollary to termination of log canonical threefold flips, using the argument of \cite{birkar_acc_2005}.  Here we include the relative situation since this is also new.

\begin{corollary}[Termination of effective lc flips]\label{cor:lc_termination}
Let $T$ be a scheme which is quasi-projective over a finite dimensional excellent ring admitting a dualising complex and no residue characteristics $2$, $3$ or $5$.  Suppose that $(X,B)$ is a log canonical threefold pair with $\mathbb{R}$-boundary which is projective over $T$.  

  Then any sequence of $(K_X+B)$-flips which are also $M$ flips for some $M\geq 0$ terminates.  
\end{corollary}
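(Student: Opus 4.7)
I would follow the strategy of Birkar \cite{birkar_acc_2005}, with Theorem \ref{thm:acc_for_lct} providing the key new input in our setting. Assume for contradiction that we have an infinite sequence of flips $\phi_i\colon X_i \dashrightarrow X_{i+1}$, each simultaneously a $(K_X+B)$-flip and an $M$-flip, and write $B_i, M_i$ for the birational transforms on $X_i$. Define
\[
t_i := \mathrm{lct}(M_i;\, X_i, B_i) = \sup\{t \geq 0 \mid (X_i, B_i + t M_i) \text{ is lc}\}.
\]
The sequence $\{t_i\}$ is non-decreasing: since $-(K_{X_i}+B_i)$ and $-M_i$ are both ample over the flipping contraction $X_i \to Z_i$, so is $-(K_{X_i}+B_i+tM_i)$ for every $t \geq 0$, whence $\phi_i$ is also a flip for $K_X + B + tM$, and the negativity lemma applied to a common log resolution shows that log canonicity is preserved across $\phi_i$.

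Next, the coefficients of every $B_i$ and $M_i$ lie in the fixed finite (hence DCC) set $\mathrm{coeff}(B) \cup \mathrm{coeff}(M)$, and each $(X_i, B_i)$ is an lc threefold in residue characteristic not equal to $2, 3$, or $5$. Theorem \ref{thm:acc_for_lct} therefore implies that $\{t_i\}$ satisfies ACC and, being non-decreasing, stabilizes at some $t$ for $i \geq N$.

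It remains to derive a contradiction from an infinite sequence of flips for the lc pair $(X_i, B_i + tM_i)$, which is not klt since $t$ is exactly the log canonical threshold. Adjunction to an lc centre gives a lower-dimensional MMP which terminates (surface MMP in our characteristic assumption). Standard special termination then forces the flipping loci eventually to avoid the non-klt locus, whereupon a small perturbation $t \mapsto t - \varepsilon$ produces a tail of klt $(K_X + B + (t-\varepsilon)M)$-flips; termination of such klt threefold flips is available via \cite{bhatt_globally, takamatsu_yoshikawa} when the image of $X$ in $T$ has positive dimension, and via the preceding material of the present paper combined with those works when $T$ is an $F$-finite field. The main obstacle is thus this final step: checking that special termination, the adjunction-induced surface MMP, and termination of klt threefold flips all hold in the exact generality demanded by the corollary's hypotheses (imperfect or mixed-characteristic residue fields, possibly arithmetic base). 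Granted that infrastructure and Theorem \ref{thm:acc_for_lct}, the remainder is a direct adaptation of Birkar's original argument.
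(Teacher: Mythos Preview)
Your proposal follows the same overall strategy as the paper---Birkar's argument from \cite{birkar_acc_2005}, with \autoref{thm:acc_for_lct} supplying the ACC input---and you correctly isolate special termination as a key ingredient. However, the endgame you sketch has a genuine gap.

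The problem is the perturbation step. You claim that once special termination forces the flipping loci to avoid the non-klt locus of $(X_i, B_i + tM_i)$, replacing $t$ by $t-\varepsilon$ yields a tail of \emph{klt} flips whose termination is known. But the hypothesis of the corollary only gives $(X,B)$ log canonical, not klt; the non-klt locus of $(X_i, B_i + (t-\varepsilon)M_i)$ may come entirely from $(X_i,B_i)$ itself and will persist for every $\varepsilon>0$. So the reduction to klt flip termination does not go through. There is also a prior issue: special termination as in \cite[Theorem 6.6]{das_waldron_imperfect} is formulated for \emph{dlt} pairs, so it cannot be applied directly to the merely lc pair $(X_i, B_i + tM_i)$ as you suggest.

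The paper's proof replaces this step by different machinery. One takes a crepant $\mathbb{Q}$-factorial dlt modification of $(X_i, B_i + tM_i)$ via \cite[Corollary 9.21]{bhatt_globally}, lifts each flip to the dlt model by running the (not necessarily terminating) $\mathbb{Q}$-factorial dlt LMMP over the flipped variety---this is the ``ability to run an LMMP for a $\mathbb{Q}$-factorial dlt pair'' ingredient, available from \cite{das_waldron_imperfect} or \cite{bhatt_globally}---and then applies special termination on the dlt model to obtain the contradiction. No appeal to termination of klt flips is needed; the full details are written out in \cite[Theorem 1.6]{waldron_lmmp_2017}. So your list of required infrastructure should replace ``termination of klt threefold flips'' by ``crepant $\mathbb{Q}$-factorial dlt modification'' together with ``existence of the dlt LMMP,'' and special termination should be invoked on the dlt model rather than on $(X_i,B_i+tM_i)$ directly.
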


\subsection{Outline of the proof}

The main result of the paper is the base point free theorem \autoref{thm:bpf} in the case where $L$ is not big, the other case having already been dealt with in \cite{das_waldron_imperfect}.  We first prove the ACC results by induction on the transcendence degree of the ground field in \autoref{sec:acc}, before moving on the base point free theorem.   As in \cite{birkar_existence_2017}, we first approximate the semi-ample fibration of $L$ using the nef reduction map $f:X\dashrightarrow Z$, which we construct in \autoref{sec:nef_red}.  We must then prove that $L$ pulls from some divisor $D$ on $Z$.  The most crucial part of the proof of the base point free theorem is to show that $D$ is big, and this is where we require ACC, and it is carried out in \autoref{sec:goodness}.  Given this, the rest of the proof in \autoref{sec:bpf} is as in the relative case \cite[Section 9]{bhatt_globally}, where we show that the big divisor $D$ is semi-ample as an application of Keel's theorem.  Finally in \autoref{sec:non_F-finite} we remove all conditions on $F$-finiteness of the ground field.

\subsection{Acknowledgements}
The author would like to thank Shikha Bhutani for pointing out errors in an earlier version and the referee for their valuable comments and suggestions.  The author was supported by NSF CAREER Grant DMS \#2440240, NSF Grant DMS \#2401279 and the Simons Foundation Gift ID \#850684
\section{Preliminaries}

We work over an arbitrary field $K$ of characteristic $p>5$, which we sometimes assume to be $F$-finite. When we require the ground field to be perfect, we will denote it by lower case $k$.  
A \emph{variety} over a field $K$ means an integral scheme which is separated and of finite type over $K$.  In particular, we do not assume varieties to be geometrically integral.

For definitions of singularities of the LMMP and for concepts such as Mori fibre space, log minimal model etc., we refer to \cite[Subsection 2.5]{bhatt_globally}.  Note that klt, dlt etc are defined without reference to a log resolution.  Boundaries will be $\mathbb{R}$-boundaries unless otherwise specified. 

\subsection{Resolution of singularities and Bertini theorems}

We use resolution of singularities in the following form:

\begin{theorem}[\cite{cossart_resolution_2014}, \cite{CossartJannsenSaito}, \cite{kollar_witaszek}]\label{thm:resolution}
Let $X$ be a reduced scheme of dimension $3$, quasi-projective over a Noetherian quasi-excellent affine scheme $\Spec(R)$.  Let $T$ be a subscheme of $X$.  Then there exist a birational morphism $g:Y\to X$ from a regular scheme $Y$ such that both $g^{-1}(T)$ and $\Ex(g)$ are divisors, $\Supp(g^{-1}(T)\cup\Ex(g))$ is simple normal crossing and $Y$ supports a $g$-ample $g$-exceptional divisor.
\end{theorem}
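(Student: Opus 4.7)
The plan is to combine the strong resolution of singularities results cited in the statement with standard follow-up blow-ups to achieve the simple normal crossing condition and to produce a $g$-ample $g$-exceptional divisor.  The analytic content -- existence of a regular birational model for a reduced quasi-projective three-fold over a Noetherian quasi-excellent base -- is supplied by \cite{cossart_resolution_2014, CossartJannsenSaito, kollar_witaszek}, and the role of this proof is to upgrade their output to the form required for the rest of the paper.

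First I would invoke the cited resolution applied to $X$ to obtain a projective birational $g_0:Y_0\to X$ with $Y_0$ regular and $\Ex(g_0)$ a divisor.  Next I would principalize the ideal of $g_0^{-1}(T)$ on the regular three-fold $Y_0$ via a further embedded resolution (again in the cited generality, or by a direct sequence of blow-ups of regular centers on a regular three-dimensional scheme), producing $g_1:Y_1\to X$ on which both $\Ex(g_1)$ and $g_1^{-1}(T)$ are divisors.  I would then continue to blow up regular centers contained in the non-SNC locus of $\Ex(g_1)\cup g_1^{-1}(T)$ until its support becomes simple normal crossing; termination in dimension three is standard, using a combinatorial invariant on a regular $3$-fold.

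For the $g$-ample $g$-exceptional divisor on the resulting $g:Y\to X$, I would exploit the fact that the entire construction is a tower $Y=Y_n\to Y_{n-1}\to\cdots\to Y_0=X$ of projective blow-ups of regular centers, each having an exceptional Cartier divisor $E_i$ with $-E_i$ ample relative to $Y_{i-1}$.  Denoting by $F_i$ the total transform of $E_i$ on $Y$, a standard composability argument for relative ampleness yields coefficients $a_i>0$ such that $-\sum a_i F_i$ is ample relative to $X$; this divisor is $g$-exceptional by construction.  An alternative is to appeal to Raynaud--Gruson to factor $g$ as the blow-up of some coherent ideal sheaf on $X$ cosupported in $g(\Ex(g))$, and take the associated $\mathcal{O}(-1)$-divisor.

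The main obstacle -- resolution itself in positive and mixed characteristic over merely quasi-excellent bases -- is dispensed with by the cited works; the argument above is pure bookkeeping.  The only subtlety is ensuring that the auxiliary blow-ups performed at each stage preserve the conclusions of the earlier stages, so that regularity of $Y$, the divisorial character of $g^{-1}(T)$ and $\Ex(g)$, the SNC condition on their union, and the relative ampleness of the exceptional combination all hold simultaneously on the final $Y$.
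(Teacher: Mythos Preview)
The paper does not give an argument here at all: its entire proof is the single sentence ``The theorem is stated and proved in this form as \cite[Proposition 2.14]{bhatt_globally}.''  So you have gone considerably further than the paper by attempting to reconstruct the content behind that citation.

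Your sketch is a reasonable outline of how such a statement is typically packaged, and in broad strokes it is what the reference does: start from the resolution theorems of Cossart--Piltant and Cossart--Jannsen--Saito, arrange for the preimage of $T$ and the exceptional locus to be SNC divisors via further blow-ups in regular centers, and build a $g$-ample $g$-exceptional divisor from the tower of blow-ups.  One caution: the step you describe as ``principalize the ideal of $g_0^{-1}(T)$ \ldots\ or by a direct sequence of blow-ups of regular centers'' and the subsequent SNC step are not quite as routine in positive and mixed characteristic as your phrasing suggests; this is precisely the content of the embedded resolution results in the cited works (and is why \cite{kollar_witaszek} appears alongside the Cossart references), rather than ``standard'' bookkeeping on a regular three-fold.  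Similarly, obtaining $\Ex(g_0)$ as a divisor from the raw output of those theorems requires some care.  None of this is a gap in the sense of a wrong idea, but if you were writing this as an actual proof you would need to point to the embedded/log versions of the cited theorems rather than assert termination of an ad hoc SNC-ification.  For the purposes of this paper, the one-line citation to \cite{bhatt_globally} is exactly the right move, since that reference has already done this packaging.
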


\begin{proof}
The theorem is stated and proved in this form as \cite[Proposition 2.14]{bhatt_globally}.
\end{proof}

The Bertini theorem in \cite{bhatt_globally} is sufficient to obtain:

\begin{lemma}[{\cite[Lemma 2.34]{bhatt_globally}}]\label{lem:add_ample}
Let $(X,B)$ be a klt pair which is projective over a field, and let $A$ be an ample divisor on $A$.  Then there exists $0\leq A'\sim_{\mathbb{Q}}A$ such that $(X,B+A')$ is klt.
\end{lemma}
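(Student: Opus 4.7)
The plan is to take $A' = \tfrac{1}{m}H$ for a sufficiently general member $H \in |mA|$ with $m$ large. First, apply \autoref{thm:resolution} to obtain a log resolution $\pi \colon Y \to X$ of $(X,B)$, so that $K_Y + B_Y = \pi^*(K_X + B)$ with every coefficient of $B_Y$ strictly less than $1$ by the klt hypothesis on $(X,B)$. For $m$ sufficiently large, $mA$ is very ample and $|mA|$ is base-point-free on $X$.

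Invoke the Bertini theorem of \cite{bhatt_globally} to extract a sufficiently general $H \in |mA|$ such that $H$ is a prime divisor, shares no component with $B$, does not contain the image $\pi(E_i)$ (which has codimension $\geq 2$ in $X$) for any $\pi$-exceptional divisor $E_i$, and such that there is a further birational modification $\sigma \colon Z \to Y$ making $\tau := \pi \circ \sigma$ a log resolution of $(X, B + H)$. Setting $A' = \tfrac{1}{m}H \sim_{\mathbb{Q}} A$, one verifies that $(X,B+A')$ is klt by checking on $Z$ that every prime divisor appearing in the boundary part of $\tau^*(K_X + B + A')$ has coefficient strictly less than $1$: the strict transforms of components of $B$ retain their coefficients $b_i < 1$; the strict transform of $H$ has coefficient $1/m < 1$; for each $\pi$-exceptional divisor $E_i$, the generic choice of $H$ gives $\operatorname{mult}_{E_i}(\tau^* H) = 0$, so the coefficient stays at $e_i < 1$; and for each new $\sigma$-exceptional divisor $F$, the coefficient is $(1 - a(F, X, B)) + (1/m) \operatorname{mult}_F(\tau^* H)$, which is $< 1$ for $m$ large because $a(F, X, B) > 0$ by the klt hypothesis on $(X,B)$ and only finitely many such $F$ occur on the fixed resolution $Z$.

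The main obstacle is the Bertini step: producing a general member $H$ with the listed transversality properties in positive characteristic over a possibly imperfect ground field, where classical Bertini smoothness arguments can fail. This is precisely what the Bertini theorem cited in \cite{bhatt_globally} is designed to deliver, which is why the present paper remarks that the lemma is immediate from it. Once $H$ with those generic properties is in hand, the discrepancy computation above is routine and the choice of $m$ large enough depends only on the finitely many log discrepancies and bounded multiplicities appearing on $Z$.
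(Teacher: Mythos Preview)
The paper does not give its own proof here; the lemma is quoted verbatim from \cite[Lemma~2.34]{bhatt_globally}, preceded only by the remark that the Bertini theorem in that reference suffices.  So there is no in-paper argument to compare against beyond the implicit one in the cited source.

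Your sketch is broadly the right shape, but the treatment of the $\sigma$-exceptional divisors contains a circularity.  You want $m$ large enough that $\tfrac{1}{m}\operatorname{mult}_F(\tau^*H) < a(F,X,B)$ for every $\sigma$-exceptional $F$ on $Z$.  But $Z$, the divisors $F$, and the multiplicities $\operatorname{mult}_F(\tau^*H)$ all depend on $H$, which was chosen in $|mA|$ and hence depends on $m$; as $m$ varies, so does the ``fixed resolution $Z$'' you appeal to, and nothing in your argument bounds $\operatorname{mult}_F(\tau^*H)$ uniformly in $m$.  The phrase ``only finitely many such $F$ occur on the fixed resolution $Z$'' therefore hides a moving target, and the final inequality is not justified.

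The standard remedy---and almost certainly what underlies the proof in \cite{bhatt_globally}---is to apply the Bertini theorem on the fixed log resolution $Y$ rather than on $X$.  For $m\gg 0$ the system $|m\pi^*A|$ is base-point-free on $Y$, and the relevant Bertini statement yields a general member $H_Y$ which is regular and simple normal crossing with the existing exceptional and boundary divisors.  Setting $H=\pi_*H_Y$ one has $\pi^*H=H_Y$ (no exceptional components, since the system is base-point-free), so $Y$ is \emph{already} a log resolution of $(X,B+\tfrac{1}{m}H)$.  No further modification $\sigma$ is needed, the only new coefficient is $\tfrac{1}{m}<1$ along $H_Y$, the coefficients along the $\pi$-exceptional divisors are unchanged, and the circular dependence on $m$ disappears entirely.
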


This usually has a variant for dlt pairs, however due to the limitations of resolution available in our situation, we are not able to use this unless we additionally assume that the ground field is $F$-finite.

\subsection{Semi-ampleness in  positive characteristic}

In this subsection, we give some criteria for semi-ampleness which are unique to fields of positive characteristic, and which will be heavily used throughout. The first is due to Keel.

\begin{definition}
Let $L$ be a nef line bundle on a scheme $X$ which is proper over a field.  A irreducible subvariety $Z\subset X$ is \emph{exceptional} for $L$ if $L|_Z$ is not big (i.e. $L^{\dim(Z)}\cdot Z=0$).  We denote the union of all exceptional subvarieties by $\mathbb{E}(L)$.
\end{definition}

\begin{definition}
We say that a nef divisor $D$ on a proper variety $X$ is \emph{endowed with map (EWM)} if there exists a proper map $f:X\to Y$ to a proper algebraic space such that $f$ contracts a subvariety $Z$ if and only if $Z$ is exceptional for $L$.  
\end{definition}

\begin{theorem}\cite[Theorem 1.9]{keel_basepoint_1999}\label{thm:keel}
Let $L$ be a nef line bundle on a scheme $X$ which is projective over a field of characteristic $p>0$. 
Then $L$ is semi-ample (resp. EWM) if and only if $L|_{\mathbb{E}(L)}$ is semi-ample (resp. EWM).
\end{theorem}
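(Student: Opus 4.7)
The ``only if'' direction is immediate: if $f \colon X \to Y$ realises $L$ as semi-ample (or EWM), then the restriction $f|_{\mathbb{E}(L)}$ witnesses the analogous conclusion for $L|_{\mathbb{E}(L)}$, because $f$ contracts precisely the exceptional subvarieties.  So the content lies in the converse, and I will concentrate on semi-ampleness, the EWM case being formally similar with algebraic spaces replacing schemes throughout.

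Write $E = \mathbb{E}(L)_{\mathrm{red}}$ with ideal sheaf $\mathcal{I}$, and assume $L|_E$ is semi-ample.  Choose $m$ with $mL|_E$ basepoint free, defining a morphism $\psi \colon E \to Y_0$.  I will try to show that, after replacing $m$ by a large multiple, $mL$ is basepoint free on all of $X$.  The key restriction sequence is
\[
0 \to \mathcal{I} \otimes \mathcal{O}_X(mL) \to \mathcal{O}_X(mL) \to \mathcal{O}_E(mL|_E) \to 0,
\]
and the basic question is whether $H^0(X, mL) \to H^0(E, mL|_E)$ is surjective.  Granting this for a moment, the base locus of $|mL|$ on $X$ becomes disjoint from $E$; but every subvariety meeting $X \setminus E$ is non-exceptional, hence $L$ is big on it, which should allow a Noetherian induction on the base locus to collapse it further, ultimately giving basepoint freeness on all of $X$.

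The substantive step is controlling the obstruction to lifting sections, which lives in $H^1(X, \mathcal{I}^r \otimes \mathcal{O}_X(mL))$ and its iterates for the $\mathcal{I}$-adic filtration.  This is where characteristic $p$ is essential.  The absolute Frobenius $F \colon X \to X$ satisfies $F^* L \cong L^{\otimes p}$, so iterating Frobenius pullback multiplies the twist by powers of $p$.  The core technical lemma I would aim for is that, for any coherent sheaf $\mathcal{F}$ supported on $E$, the natural maps
\[
H^i(X, \mathcal{F} \otimes L^{\otimes m}) \to H^i(X, (F^n)_* \mathcal{F} \otimes L^{\otimes m p^n})
\]
factor through zero for $n \gg 0$, using semi-ampleness of $L$ on $\mathrm{Supp}(\mathcal{F}) \subset E$.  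Applied to the graded pieces $\mathcal{I}^r/\mathcal{I}^{r+1}$ and dévissed through the adic filtration, this should kill the obstruction after sufficient Frobenius twisting, producing the lifted sections.

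The main obstacle is this Frobenius vanishing statement: one needs cohomological decay for sheaves supported on the exceptional locus using only semi-ampleness of $L|_E$ and nefness of $L$, with no global positivity available.  Managing the non-reduced scheme structure of $\mathbb{E}(L)$, and converting cleanly between Frobenius pullback (which multiplies line bundles) and Frobenius pushforward (which preserves coherence), is where I anticipate the most delicate bookkeeping; it is also the step that has no counterpart in characteristic zero, so intuition from that setting is of limited help.
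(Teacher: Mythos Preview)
The paper does not prove this theorem at all: it is stated purely as a citation to Keel's original result \cite[Theorem 1.9]{keel_basepoint_1999} and used as a black box throughout. There is therefore no ``paper's own proof'' to compare your proposal against.

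For what it is worth, your sketch is a recognisable outline of Keel's actual argument --- the restriction sequence, lifting sections from $\mathbb{E}(L)$ to its thickenings, and the use of Frobenius to annihilate the cohomological obstructions --- and you correctly identify the Frobenius-kills-obstruction step as the heart of the matter and the place where characteristic $p$ is essential. But as you yourself flag, the proposal is a strategy rather than a proof: the Frobenius vanishing lemma is stated as a target rather than established, and the d\'evissage through the $\mathcal{I}$-adic filtration together with the passage from ``base locus disjoint from $E$'' to ``base locus empty'' are both left at the level of ``should''. If you were required to supply a full proof here, those are the places needing real work; but since the paper treats the result as input, no proof was expected.
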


Keel's theorem is only useful if the line bundle $L$ is big.  However, the next criterion due to Cascini and Tanaka can sometimes be used if it is not.

\begin{theorem}\cite[Theorem 1.1]{cascini_relative_2017}\label{thm:cascini_tanaka}
Let $f:X\to S$ be a projective morphism of Noetherian $\mathbb{F}_p$-schemes.  Let $L$ be an invertible sheaf on $X$ such that $L|_{X_s}$ is semi-ample for every point $s\in S$.  Then $L$ is $f$-semi-ample.
\end{theorem}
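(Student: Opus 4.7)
The plan is Noetherian induction on the base $S$, combined with a secondary induction on the relative dimension of $f$ and with fibrewise application of Keel's theorem \autoref{thm:keel}. Keel's theorem is the essential characteristic $p$ input, since it is what makes fibrewise semi-ampleness propagate in ways that fail in characteristic zero (e.g.\ in families of surfaces of general type).

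First I would make the standard reductions: by Noetherian induction on $S$, assume the result for every proper closed subscheme, and take $S$ reduced, irreducible, and affine. The base case $\dim S = 0$ is tautological. For positive-dimensional $S$, let $\eta$ be the generic point. Since $L|_{X_\eta}$ is semi-ample, for some $n\geq 1$ there is a morphism $\phi_\eta\colon X_\eta \to Y_\eta$ and an ample invertible sheaf $A_\eta$ on $Y_\eta$ with $L^{\otimes n}|_{X_\eta}\simeq \phi_\eta^*A_\eta$. Standard spreading-out of schemes and morphisms (EGA IV) yields an open $U\subset S$, a projective morphism $Y_U\to U$, a $U$-ample sheaf $A_U$ on $Y_U$, and a morphism $\phi_U\colon X_U\to Y_U$ over $U$ with $L^{\otimes n}|_{X_U}\simeq \phi_U^*A_U$; in particular $L$ is $f$-semi-ample after base change to $U$. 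Applying the Noetherian hypothesis to the closed complement $Z=S\setminus U$ yields that $L|_{X_Z}$ is $f|_{X_Z}$-semi-ample.

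The main obstacle is gluing these two pieces of semi-ample data into $f$-semi-ampleness over all of $S$. Here I would apply Keel's theorem \autoref{thm:keel} fibrewise by passing to the relative exceptional locus $\mathbb{E}(L/S)$, namely the closed subscheme of $X$ whose geometric fibres are $\mathbb{E}(L|_{X_s})$. Induct on the relative dimension of $f$: if $L$ is $f$-big, $\mathbb{E}(L/S)$ has strictly smaller fibre dimension, the restriction $L|_{\mathbb{E}(L/S)}$ still satisfies the fibrewise hypothesis, and so the inductive hypothesis gives that it is $f$-semi-ample, which then lifts to $L$ itself by Keel. If instead $L$ is not $f$-big, replace $X$ by the Stein factorisation of $\phi_U$ extended across $S$, which reduces to the $f$-big case relative to a smaller base. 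The hardest step is making this relative Keel argument rigorous across the degeneration locus $Z$: one must verify that $\mathbb{E}(L/S)$ behaves sensibly in the family (it may have to be defined scheme-theoretically via the vanishing of Chern-class calculations on fibres) and that the Frobenius-based construction of sections in the proof of \autoref{thm:keel} can be carried out uniformly over $S$ rather than only pointwise on fibres.
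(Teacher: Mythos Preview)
The paper does not give a proof of this statement: it is quoted verbatim as a preliminary result from \cite{cascini_relative_2017} and used as a black box (in Step~4 of \autoref{prop:relative_ACC}). So there is nothing in the paper to compare your argument against.

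As for the sketch itself: the overall architecture (Noetherian induction on $S$, spreading out the semi-ample fibration from the generic fibre, then gluing across the complement) is indeed the shape of the Cascini--Tanaka argument, and Keel's theorem is the right characteristic-$p$ tool. But the part you flag as ``hardest'' is exactly where the real content lies, and your description of it is not quite right. The crucial lemma is not a relative version of $\mathbb{E}(L/S)$ or a fibrewise Chern-class construction; it is the statement that if $T\hookrightarrow T'$ is a nilpotent thickening of Noetherian $\mathbb{F}_p$-schemes and $L|_T$ is semi-ample, then $L|_{T'}$ is semi-ample. This is where Frobenius enters: a high Frobenius power kills the nilpotent ideal sheaf, so sections of $L^{p^e}|_T$ extend to $T'$. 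Once one has this thickening lemma, the gluing of the open piece $U$ and the closed piece $Z$ is done by applying it to the infinitesimal neighbourhoods of $X_Z$ in $X$ and invoking the formal-functions/Mittag-Leffler style argument, not by a second induction on relative dimension via $\mathbb{E}(L/S)$. Your proposed secondary induction would run into trouble because $\mathbb{E}(L/S)$ need not be flat or well-behaved in families, whereas the thickening approach sidesteps this entirely.
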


Note that it is not sufficient to check only closed points in \autoref{thm:cascini_tanaka}.

\subsection{Adjunction coefficients}

For our proof of the ACC results, we need to have careful control of the coefficients which appear under adjunction. 

\begin{proposition}\label{prop:adjunction_coefficients}
Let $(X,B)$ be a log canonical pair, $S$ a component of $\rddown{B}$, $S^{\eta}\to S$ be the normalisation and $B_{S^\eta}$ the boundary defined by adjunction \cite[Section 4.2]{kollar_singularities_2013}.  Write $B=S+\sum b_i B_i$ with $B_i$ prime divisors.  Let $V$ be a prime divisor on $S^{\eta}$.  Then there exists $m\in\mathbb{N}\cup\{\infty\}$ and $l_i\in\mathbb{N}\cup\{0\}$ such that the coefficient of $V$ in $B_{S^\eta}$ is 
\[\frac{m-1}{m}+\sum\frac{l_ib_i}{m}\]

Hence if $B$ has coefficients from a DCC set $\Lambda$, then $B_{S^\eta}$ has coefficients from a DCC set $\mathfrak{G}(\Lambda)$ depending only on $\Lambda$.
\end{proposition}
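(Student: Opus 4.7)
The plan is to reduce the computation of each coefficient of $B_{S^\eta}$ to a codimension-two local problem on $X$, derive the claimed formula there using the standard adjunction formalism, and deduce the DCC assertion by a splitting argument on the value of $m$.

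Concretely, I would fix a prime divisor $V\subset S^\eta$ and localize $X$ at the image of the generic point of $V$, a codimension-two point $p$. The coefficient of $V$ in $B_{S^\eta}$ depends only on the behaviour of $(X,S+B)$ at $p$, so after localization we may assume $X$ is essentially two-dimensional at $p$. If $(X,S+B)$ fails to be plt at $p$, then $V$ is a log canonical centre on $S^\eta$ and its coefficient in $B_{S^\eta}$ equals $1$, matching $m=\infty$ with the convention $\tfrac{l_ib_i}{m}=0$. Otherwise $(X,S+B)$ is plt at $p$, and $K_X+S$ is $\mathbb{Q}$-Cartier at $p$ of some finite index $m$. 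Taking a local index-one cover $\pi\colon X'\to X$ on which $\pi^*(K_X+S)$ becomes Cartier and reading off multiplicities of $\pi^*B_i$ and of the ramification divisor recovers the classical formula \cite[\S3.9, \S4.2]{kollar_singularities_2013}: the coefficient of $V$ equals $\tfrac{m-1}{m}+\sum \tfrac{l_ib_i}{m}$ with $l_i\in\mathbb{N}\cup\{0\}$ the appropriate pullback orders.

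For the DCC step, observe first that the coefficients all lie in $[0,1]$ by log canonicity of $(X,S+B)$. Suppose for contradiction that $c_n=\tfrac{m_n-1}{m_n}+\sum_i\tfrac{l_{i,n}b_{i,n}}{m_n}$ is a strictly decreasing sequence in $[0,1]$. If a subsequence has $m_n\to\infty$, then $c_n\geq \tfrac{m_n-1}{m_n}\to 1$ forces $c_n\to 1$, which is impossible for a strictly decreasing sequence in $[0,1]$; hence $m_n$ is bounded. Passing to a subsequence with $m_n=m$ constant and using $\lambda_0:=\min(\Lambda\setminus\{0\})>0$ (well-defined by DCC of $\Lambda$), the bound $c_n\leq 1$ gives $l_{i,n}\leq m/\lambda_0$, so after further subsequencing the multi-index $(l_{i,n})_i$ is constant. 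The problem reduces to a strictly decreasing sequence $\sum_i l_i b_{i,n}$ with $b_{i,n}\in\Lambda$, contradicting DCC of $\Lambda$.

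The main obstacle I anticipate lies in the codimension-two local analysis in positive and mixed characteristic, where the normalisation $S^\eta\to S$ and the index-one cover used above may exhibit wild ramification or imperfect residue-field issues; one must verify carefully that the formula, and in particular the integrality of the $l_i$, still holds in this generality. The combinatorial DCC argument is standard.
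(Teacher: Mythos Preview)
Your approach is correct and essentially the same as the paper's: both reduce to the local codimension-two (excellent surface) situation, where the formula is classical. The paper simply cites \cite{birkar_existence_2017} for this reduction and the surface computation, while you sketch the index-one cover argument and the combinatorial DCC step explicitly; your flagged concern about wild ramification in positive/mixed characteristic is precisely what the cited treatment in \cite[Section 4.2]{kollar_singularities_2013} handles.
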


\begin{proof}
The proof in \cite{birkar_existence_2017} immediately reduces to the case of an excellent surface, so this proof also works in arbitrary dimensions and more general excellent schemes.
\end{proof}

\subsection{Log minimal model program}

In this subsection we summarise the known results of the threefold LMMP which we need to apply later.

The first is the relative LMMP of \cite{bhatt_globally}.  Note that this applies very generally, and we will use the full power of the LMMP over non-quasi-projective schemes in our proof of ACC. 

\begin{theorem}\cite[Theorem 9.35]{bhatt_globally}\label{thm:relative_mmp}
Let $T$ be a scheme which is quasi-projective over a finite dimensional excellent ring $R$ admitting a dualising complex, none of whose residue characteristics are $2$, $3$ or $5$.  

Let $(X,B)$ be a three dimensional $\mathbb{Q}$-factorial klt pair with $\mathbb{R}$-boundary, which is projective over $T$, such that the image of $X$ in $T$ has positive dimension.  Suppose that $A$ is an ample divisor on $X$ such that $K_X+B+A$ is nef over $T$.  Then we can run the $(K_X+B)$-MMP over $T$ with scaling of $A$ and it terminates with either a log minimal model or Mori fibre space.
\end{theorem}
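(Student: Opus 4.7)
The plan is to execute the standard MMP-with-scaling algorithm and verify that each of its three pillars---cone and contraction theorems, existence of flips, and termination---is available in the relative setting postulated here.

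First I would initialise: set $(X_0,B_0,A_0)=(X,B,A)$, $\lambda_0=1$, and for each $i$ define
\[\lambda_i = \inf\{t\geq 0 : K_{X_i}+B_i+tA_i \text{ is nef over } T\}.\]
Whenever $\lambda_i>0$, the relative cone theorem for klt threefolds over $T$ produces a $(K_{X_i}+B_i)$-negative extremal ray $R_i$ trivial against $K_{X_i}+B_i+\lambda_i A_i$. The contraction $f_i\colon X_i\to Y_i$ exists as a projective morphism over $T$ by base point freeness applied to the nef and $f_i$-big $\mathbb{R}$-divisor $K_{X_i}+B_i+\lambda_i A_i$; the hypothesis that $X$ has positive dimensional image in $T$ is what permits this base point freeness in the requisite generality, reducing the construction of the contraction to situations already handled in \cite{bhatt_globally}.

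Next I would dispose of the three contraction types. A Mori fiber contraction ends the process; a divisorial contraction yields a new $\mathbb{Q}$-factorial klt pair $(X_{i+1},B_{i+1})$ of smaller relative Picard rank; a flipping contraction requires the existence of the flip, which is supplied by the relative threefold flip package in residue characteristic $p>5$, again under the positive dimensional image assumption. In each case the resulting $(X_{i+1},B_{i+1},A_{i+1})$ is $\mathbb{Q}$-factorial klt, and $K_{X_{i+1}}+B_{i+1}+\lambda_i A_{i+1}$ remains nef over $T$, so the induction proceeds.

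Finally, for termination, the sequence $(\lambda_i)$ is monotone non-increasing. Since each $\lambda_i$ is a log canonical threshold of $A_i$ with respect to $(X_i,B_i)$, ACC for log canonical thresholds (\autoref{thm:acc_for_lct}) forces the $\lambda_i$ into a DCC set, so they must stabilise at some $\lambda\geq 0$. Once stabilised, all remaining steps are $(K_{X_i}+B_i+\lambda A_i)$-trivial flips, and one applies special termination---induction on dimension applied to the components of $\lfloor B_i+\lambda A_i\rfloor$ using the surface MMP---to exclude an infinite sequence. I expect the principal obstacle to be this final termination step in the relative setting: unlike over an algebraically closed field, an infinite sequence of flips over $T$ need not restrict to an infinite sequence on any single fibre, so the ACC statement has to be wielded with uniformity across $T$. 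This is exactly why \autoref{thm:acc_for_lct} is phrased as a bound on singularities rather than on varieties over a fixed base, and verifying that this uniformity transfers cleanly to each step of the relative MMP is the delicate point of the argument.
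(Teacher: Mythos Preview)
The paper does not prove this statement at all: it is quoted verbatim from \cite[Theorem 9.35]{bhatt_globally} as a known input to be used later, so there is no ``paper's own proof'' to compare against. Your task was therefore ill-posed from the start, but the sketch you give also has substantive problems worth noting.

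The termination step is where the argument breaks. You assert that each $\lambda_i$ is a log canonical threshold of $A_i$ with respect to $(X_i,B_i)$, and then invoke \autoref{thm:acc_for_lct}. But $\lambda_i$ is the \emph{nef threshold}, not a log canonical threshold: it is defined by a positivity condition on $K_{X_i}+B_i+tA_i$, not by the failure of log canonicity of $(X_i,B_i+tA_i)$. There is no reason these should coincide, and in general they do not. So ACC for log canonical thresholds says nothing about the sequence $(\lambda_i)$ directly. Moreover, even if this were repaired, invoking \autoref{thm:acc_for_lct} here would be circular in the logical structure of the present paper: \autoref{thm:acc_for_lct} is one of the main results proved here, and its proof (via \autoref{prop:relative_ACC} and \autoref{thm:global_acc}) relies essentially on the relative MMP of \cite{bhatt_globally}, including precisely the statement you are trying to establish.

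Your fallback to special termination on $\lfloor B_i+\lambda A_i\rfloor$ is also unsupported: $(X,B)$ is klt, so $\lfloor B\rfloor=0$, and nothing forces $\lfloor B_i+\lambda A_i\rfloor$ to be nonempty. The actual termination argument in \cite{bhatt_globally} proceeds differently, through finiteness of models in the relative setting; see the discussion around \cite[Theorem 9.35]{bhatt_globally} rather than attempting to rebuild it from ACC.
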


The next set of results are from \cite{das_waldron_imperfect}, which developed the LMMP over imperfect fields for pseudo-effective threefold pairs $(X,B)$.  Our main interest here is to deal with the non-pseudo-effective case.  First the cone theorem, which is crucial input to running a terminating LMMP in the non-pseudo-effective case.  We give a slightly strengthened version of the original statement, which we will need later.

\begin{theorem}\cite[Theorem 1.1]{das_waldron_imperfect}\label{thm:cone}
	Let $(X,B)$ be a projective $\bQ$-factorial, dlt pair of dimension at most $3$, over a field $K$ of characteristic $p> 5$. Then there is a countable set of curves $\{C_i\}_{i\in I}$ such that:
		\begin{enumerate}
			\item \[\overline{NE}(X)=\overline{NE}(X)_{K_X+B\geq 0}+\sum_{i\in I}\mathbb{R}_{\geq 0}[C_i]\]
			\item For any ample $\bQ$-divisor $A$, there is a finite subset $I_A\subset I$ such that
			\[\overline{NE}(X)=\overline{NE}(X)_{K_X+B+A\geq 0}+\sum_{i\in I_A}\mathbb{R}_{\geq 0}[C_i]\]
			\item The rays $[C_i]$ do not accumulate in $\overline{NE}(X)_{K_X+B<0}$.
			\item  For each $C_i$, there is a unique positive integer $d_{C_i}$
			depending only on $X, C_i$ and the ground field $K$ satisfying the following properties: 
			\begin{enumerate}
			\item If $C^{\overline{K}}_i$ is an integral curve on $X\otimes_K\overline{K}$ which maps to $C_i$ via $\phi:X\otimes_K\overline{K}\to X$, then for any Cartier divisor $L$ on $X$ we have 
			\[d_{C_i}=\frac{L\cdot_K C_i}{\phi^*L\cdot_{\overline{K}}C^{\overline{K}}_i}.\]
			In particular, $L\cdot_K C_i$ is divisible by $d_{C_i}$.
\item			\( 0<-(K_X+B)\cdot_K C_i\leq  6d_{C_i}\)

\end{enumerate}
		\end{enumerate}
		
Furthermore, if $Y$ is the normalisation of $X\otimes_K \overline{K}$, we can additionally choose the curves $C_i$ to be the images of rational curves from $Y$.
\end{theorem}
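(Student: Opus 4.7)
Parts $(1)$--$(4)$ are exactly \cite[Theorem 1.1]{das_waldron_imperfect}, so the only new content is the concluding \emph{furthermore} clause, which strengthens the conclusion to say that each $C_i$ can be taken to be the image of a rational curve on $Y$, the normalisation of $X\otimes_K\overline{K}$. I would not reprove $(1)$--$(4)$ from scratch; instead I would trace the construction of the $C_i$ in \cite{das_waldron_imperfect} and observe that these curves already arise as images, under the natural map $\pi:Y\to X$, of curves on $Y$ produced by bend-and-break, which are automatically rational.

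Concretely, let $\phi:X\otimes_K\overline{K}\to X$ be the base change and $\nu:Y\to X\otimes_K\overline{K}$ the normalisation, so $\pi:=\phi\circ\nu$ is finite and surjective. Since $Y$ is projective and normal over the algebraically closed field $\overline{K}$, Mori-style bend-and-break applies on $Y$: any curve on which $K_Y$ is sufficiently negative can, within its ray in $\overline{NE}(Y)$, be replaced by a rational curve of length at most $2\dim Y=6$. Given an extremal ray $R_i\subset\overline{NE}(X)$ with $(K_X+B)\cdot R_i<0$, one starts with any integral curve $C_i^{\overline{K}}\subset Y$ lying over $R_i$, applies bend-and-break to produce a rational $\tilde{C}_i\subset Y$ in the same ray of $\overline{NE}(Y)$, and then sets $C_i:=\pi(\tilde{C}_i)$. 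The integrality factor $d_{C_i}$ in $(4)(a)$ and the length bound $-(K_X+B)\cdot C_i\le 6d_{C_i}$ follow by comparing intersection numbers along the finite map $\pi$ with the $\overline{K}$-side bound $-(K_Y+B_Y)\cdot \tilde{C}_i\le 6$.

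The only subtle point is that $\pi$ is inseparable, so $\pi^*(K_X+B)$ differs from any reasonable $K_Y+B_Y$ by an effective correction coming from the Jacobian of $\nu$ together with the purely inseparable part of $\phi$. This correction is \emph{favourable}: it only makes $K_Y$ more negative on curves dominating $R_i$, so bend-and-break on $Y$ fires without obstruction and the resulting rational curve still lies in the correct ray. The main bookkeeping obstacle I expect is that the $\pi$-preimage of $R_i$ may split into several rays on $Y$, and one must pick a component that genuinely surjects onto $R_i$ rather than collapsing; however, since $R_i$ is one-dimensional and $\pi$ is finite, at least one irreducible component of the preimage of any generator has this property, so this is a finite verification rather than a genuine difficulty.
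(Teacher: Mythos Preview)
Your high-level plan---take parts (1)--(4) from \cite{das_waldron_imperfect} and then trace through that paper's construction of the $C_i$ to verify the ``furthermore''---is exactly what the paper does. But your concrete execution has a genuine gap. You claim that, given an extremal ray $R_i$ on $X$, one can lift a generator to $Y$ and then apply bend-and-break to produce a rational curve \emph{in the same ray of $\overline{NE}(Y)$}. Bend-and-break (Koll\'ar, II.5.8) does not do this: it produces a rational curve through a chosen point of the original curve, with bounded $-K_Y$-degree, but with no control on its numerical class. Turning that into a rational curve lying in a prescribed extremal face is the content of the cone theorem itself, and on $Y$ you cannot invoke a cone theorem because $(Y,B_Y)$ is typically far from dlt (the normalised base change can be badly singular). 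So your uniform argument, as written, does not explain why $\pi(\tilde{C}_i)$ lands in $R_i$.

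The paper avoids this by \emph{not} attempting a uniform bend-and-break argument. Instead it walks through the actual case analysis in \cite{das_waldron_imperfect} and checks, in each case, that the specific curve already produced there is the image of a rational curve on $Y$. In most cases (birational contractions in dimension $2$; divisorial, flipping, and low-relative-dimension contractions in dimension $3$) the curve in \cite{das_waldron_imperfect} is found by adjunction or restriction to a lower-dimensional subvariety, and rationality follows by induction on dimension together with the observation that a curve $C$ with $\deg K_C<0$ is a conic, hence the image of a rational curve from its base change. Bend-and-break on $Y$ is invoked only in the cases where the extremal contraction has full relative dimension (surface to a point, threefold to a point), which is precisely where \cite{das_waldron_imperfect} already used it; there the ray-membership issue is handled inside the original argument, not by your proposed mechanism. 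Your sketch would need to be replaced by this case split to close the gap.
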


\begin{proof}
	The first change from \cite{das_waldron_imperfect} is to drop the $F$-finite assumption.  This can be obtained simply by referring to the proof of \cite[Theorem 9.28]{bhatt_globally} in place of the proof of \cite[Theorem 1.1]{birkar_existence_2017} in \cite[Proof of Theorem 1.1]{das_waldron_imperfect}.
	
	The formula for $d_{C_i}$ in (4.a) is \cite[Lemma 4.1]{das_waldron_imperfect}.
	
Now it only remains to prove the furthermore, since everything else is covered in \cite{das_waldron_imperfect}.  
In fact, the curves found in \cite{das_waldron_imperfect} already had the required property, so it remains to work through the proof in \cite{das_waldron_imperfect} to verify this.  The proof of the cone theorem is split into several pieces:
\begin{itemize}
\item If $\dim(X)=1$ then the statement has content only if $\deg K_X<0$.   Let $L=H^0(X,\sO_X)$.  Then  \cite[Lemma 6.5]{cascini_base_2015} says that $X\otimes_L\overline{K}$ is a conic in $\mathbb{P}^2_{\overline{K}}$, of which the reduction of a component is a rational curve.  Via the factorization $X\otimes_L\overline{K}\to X\otimes_{K}\overline{K}\to X$, we see that \(X\) is also the image of a rational curve on $X\otimes_K\overline{K}$. 

\item If $\dim(X)=2$ we must examine the proof of \cite[Theorem 4.3]{das_waldron_imperfect}. This proceeds by applying a known version of the cone theorem to isolate the extremal rays and then showing that each one contains a curve of the required form.  So we split into cases based on the type of contraction $f:X\to Z$ of the extremal ray.  If $f$ is birational then the curve found at the very end of \cite[Theorem 4.3]{das_waldron_imperfect} satisfies $\deg K_{C_i}<0$ and hence is the image of a rational curve by the dimension $1$ case.  If $f$ has relative dimension one then the curve is a general fibre of this morphism which again has  $\deg K_{C_i}<0$ by adjunction.  Finally if $f$ has relative dimension two, the curve is found by applying bend and break \cite[II.5.8]{kollar_rational_1996} on $Y$ and taking the image on $X$, and therefore the curve found on $Y$ is rational. 
\item If $\dim(X)=3$ then we find the curves in the proof of \cite[Lemma 7.1]{das_waldron_imperfect}.  There are three cases considered, the first of which is finding a curve inside a divisorial extremal ray.  In this case we cite the argument of \cite[Lemma 3.2]{birkar_existence_2017}, which restricts to and produces a Mori fibre structure on the divisor contracted by the ray.  Hence the curves found come from rational curves by the $\dim(X)=2$ case.
The next case is to deal with a flipping contraction, which again cites \cite[Lemma 3.2]{birkar_existence_2017}, where this case again concludes by applying the cone theorem for surfaces. 
Finally we need to find curves in a contraction of positive relative dimension.  If the relative dimension is not three, then again \cite{das_waldron_imperfect} cites the argument of \cite[Lemma 3.3]{birkar_existence_2017} which proceeds by applying the cone theorem on the restriction to the pullback of a general hyperplane, so again the rationality comes from the surface case.  Finally if the relative dimension is $3$ then the curve comes from applying bend and break on $Y$, which produces rational curves as in the surface case.
\end{itemize}

\end{proof}

We also proved the base point free theorem for big divisors, which together with the cone theorem allows us to contract birational extremal rays:

\begin{theorem}\cite[Theorem 1.4]{das_waldron_imperfect}\label{thm:big_bpf}
Let $(X,B)$ be a projective klt threefold pair over a field of characteristic $p>5$ with $\mathbb{Q}$-boundary.  Then if $L$ is a $\mathbb{Q}$-divisor such that both $L$ and $L-(K_X+B)$ are nef and big, then $L$ is semi-ample.
\end{theorem}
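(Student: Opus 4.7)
The plan is to reduce semi-ampleness of $L$ to semi-ampleness on a lower-dimensional subscheme via Keel's theorem (\autoref{thm:keel}), and then handle that by induction on dimension.

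First, since $L-(K_X+B)$ is big and nef, apply Kodaira's lemma to write $L-(K_X+B)\sim_\mathbb{Q} A+E$ with $A$ ample and $E\geq 0$ effective. For sufficiently small $\epsilon>0$ the pair $(X,B+\epsilon E)$ remains klt, and
\[ L-(K_X+B+\epsilon E) = (1-\epsilon)(L-(K_X+B)) + \epsilon A \]
is ample up to $\mathbb{Q}$-equivalence. Hence we may reduce to the situation where $L-(K_X+B)$ is ample. Now invoke \autoref{thm:keel}: it suffices to prove that $L|_{\mathbb{E}(L)}$ is semi-ample. Since $L$ is big, $\mathbb{E}(L)\subsetneq X$ is a proper closed subset, so $\dim\mathbb{E}(L)\leq 2$, and we may stratify it into irreducible components.

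For an irreducible surface component $S\subseteq\mathbb{E}(L)$, the plan is to realise $S$ inside a dlt boundary on a birational model and apply surface theory. Using \autoref{thm:resolution} and a suitable perturbation, construct a $\mathbb{Q}$-factorial dlt modification of $(X,B+\epsilon E+\delta S')$ (where $S'$ is the strict transform of $S$) so that $S'$ appears in the dlt locus. Apply divisorial adjunction with coefficient control via \autoref{prop:adjunction_coefficients} to obtain a klt pair $(S^\nu,B_{S^\nu})$ on the normalisation for which the pullback of $L$ is nef and big with $L|_{S^\nu}-(K_{S^\nu}+B_{S^\nu})$ big and nef; then apply the base point free theorem for klt surfaces in positive characteristic to conclude $L|_{S^\nu}$ is semi-ample, and descend to $S$ (again using Keel-type arguments). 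For a one-dimensional component $C\subseteq\mathbb{E}(L)$ we have $L\cdot C=0$, so $L|_C$ is numerically trivial; a second application of \autoref{thm:keel} on $C$ reduces to the trivial zero-dimensional case.

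The main obstacle is the two-dimensional stratum over an imperfect base: constructing the dlt modification and controlling singularities under adjunction is delicate in this setting, since the resolution available (\autoref{thm:resolution}) and the dlt variant of the Bertini statement discussed after \autoref{lem:add_ample} are more restrictive than over perfect ground fields. One must also descend semi-ampleness from $S^\nu$ to the possibly non-normal $S$ and glue across components of $\mathbb{E}(L)$; this is where Keel's theorem plays its decisive role, since the nef line bundle, once semi-ample on the normalisation of each component, patches by the descent built into \autoref{thm:keel}.
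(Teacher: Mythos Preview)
The paper does not give its own proof of this statement; it is simply quoted from \cite[Theorem 1.4]{das_waldron_imperfect} as background input, so there is nothing in the present paper to compare your argument against.

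That said, your proposal contains a genuine error. You write that for a surface component $S\subseteq\mathbb{E}(L)$ the pullback $L|_{S^\nu}$ is ``nef and big''. By definition of $\mathbb{E}(L)$, every irreducible component $S$ of $\mathbb{E}(L)$ satisfies $L^{\dim S}\cdot S=0$, so $L|_{S}$ is \emph{not} big; it has numerical dimension at most $1$. Thus the ``big and nef base point free theorem for klt surfaces'' you invoke does not apply in the form you state. What is actually needed on $S^\nu$ is the non-big base point free theorem for surfaces: $L|_{S^\nu}$ nef together with $L|_{S^\nu}-(K_{S^\nu}+B_{S^\nu})$ nef and big forces $L|_{S^\nu}$ to be semi-ample, by Tanaka's abundance for surfaces over imperfect fields. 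The overall Keel strategy you outline is the standard one used in \cite{das_waldron_imperfect} (following \cite{birkar_existence_2017} and ultimately \cite{keel_basepoint_1999}), but the heart of the matter is precisely handling the non-big restriction, not a big one.

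Your treatment of curves is also incomplete: ``numerically trivial on $C$'' does not by itself give semi-ampleness (think of a non-torsion degree-zero line bundle on a curve of positive genus). In practice one does not treat curves separately; one arranges via tie-breaking that $\mathbb{E}(L)$ is covered by lc centres of a suitable pair, passes to a dlt model, and reduces everything to semi-log canonical abundance in dimension at most two on the reduced boundary, after which Keel's theorem glues.
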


\begin{theorem}\cite[Theorem 1.6]{das_waldron_imperfect}\label{thm:lmm_dw}
Let $(X,B)$ be a quasi-projective klt threefold pair over an $F$-finite field $k$ of characteristic $p>5$.  Then if $K_X+B$ is pseudo-effective, $(X,B)$ has a log minimal model.
\end{theorem}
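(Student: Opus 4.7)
The plan is to run a $(K_X+B)$-MMP with scaling of an ample divisor and to prove its termination. After first replacing $(X,B)$ by a $\bQ$-factorial dlt modification --- obtained by resolving via \autoref{thm:resolution} and then running a relative MMP back down --- I would choose an ample $A$ with $K_X+B+A$ nef and, along the MMP, define the scaling thresholds
\[\lambda_i = \inf\{t\geq 0 : K_{X_i}+B_i+tA_i\text{ is nef}\}.\]
At each stage, the cone theorem (\autoref{thm:cone}) produces an extremal ray $R_i$ with $(K_{X_i}+B_i+\lambda_iA_i)\cdot R_i=0$ and $(K_{X_i}+B_i)\cdot R_i<0$, which we then seek to contract.

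When the ray is divisorial, the contraction is supplied by \autoref{thm:big_bpf}: applied to $K_{X_i}+B_i+(\lambda_i+\varepsilon)A_i$ for small $\varepsilon>0$, which is nef and big, it produces the required morphism, and standard arguments preserve $\bQ$-factoriality and klt singularities after the contraction. The main constructive obstacle is producing the flip when the ray is small. I would tackle this through the pl-flip formalism: restrict the flipping contraction to a component of $\rddown{B_i+\lambda_iA_i}$ meeting the flipping locus (a surface), and deduce finite generation of the relative log canonical algebra from the known surface MMP via adjunction and careful control of the boundary coefficients produced by \autoref{prop:adjunction_coefficients}. The $F$-finiteness hypothesis on $k$ is decisive here, since it makes available the trace maps and Frobenius-based finite generation techniques from the characteristic-$p>5$ threefold theory developed in \cite{hacon_three_2015, birkar_existence_2016}, which one must adapt to the imperfect setting.

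For termination, pseudo-effectivity of $K_X+B$ forces $\lambda_i \to 0$, and I would rule out an infinite sequence of flips by special termination: restricting the flips to the components of $\rddown{B_i+\lambda_iA_i}$ produces surface MMPs, which must terminate by the two-dimensional theory. The hardest technical issue --- and the reason the argument genuinely relies on $F$-finiteness --- is extracting finite generation and descent statements in the absence of strong Bertini theorems and of a well-behaved resolution theory over imperfect fields; I expect flip existence (via restriction to the flipping surface and lifting) to be the central difficulty, with termination then essentially inherited from the surface case and from special termination.
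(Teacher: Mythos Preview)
The paper does not contain a proof of this statement: \autoref{thm:lmm_dw} is stated in the preliminaries as a citation of \cite[Theorem 1.6]{das_waldron_imperfect}, and is used as a black-box input throughout (e.g.\ in the proof of \autoref{thm:it_is_good} and \autoref{thm:finiteness_of_models}). There is therefore nothing in the present paper to compare your proposal against.

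That said, a few comments on the sketch itself. The overall shape --- $\bQ$-factorial dlt modification, cone theorem, contraction via the big base point free theorem, flips, termination --- is the right one, and is indeed roughly how \cite{das_waldron_imperfect} proceeds. But two of your steps are not quite right as stated. First, the assertion that ``pseudo-effectivity of $K_X+B$ forces $\lambda_i\to 0$'' is not automatic: the $\lambda_i$ are merely non-increasing, and showing $\lim\lambda_i=0$ (or otherwise terminating) already requires a termination-type argument. Second, your plan to produce flips via pl-flips by restricting to a component of $\rddown{B_i+\lambda_i A_i}$ presupposes such a component exists and meets the flipping locus; after the initial klt reduction there is no reason for $\rddown{B_i}$ to be nonempty, and $A_i$ can be chosen general enough to avoid the flipping curves. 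The actual argument in \cite{das_waldron_imperfect} builds flips by reduction to the algebraically closed case (via base change to $\overline{K}$ and normalisation, controlling the conductor) rather than by a direct pl-flip restriction over the imperfect field, and obtains termination not from special termination alone but by combining special termination with an ascending-chain-type argument on thresholds.
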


\begin{remark}
We will prove this result in the non-$F$-finite case in \autoref{sec:non_F-finite}.
\end{remark}

\section{Extremal rays and Tsen's theorem}

Tsen's theorem provided a key step in the construction of Mori fibre spaces for threefolds over algebraically closed fields \cite[Lemma 3.5]{birkar_existence_2017} by allowing us to control the behaviour of polytopes of boundary divisors.  While Tsen's theorem does not hold in our situation, in this section we get around this issue to recover the results on boundedness of extremal rays \cite[Proposition 3.8]{birkar_existence_2017}.

\begin{lemma}\label{lem:tsen}
Let $(X,B)$ be a $\mathbb{Q}$-factorial dlt pair of dimension $3$ over a field $K$ of characteristic $p>5$.  Assume that $f:X\to Z$ is a $(K_X+B)$-negative birational extremal contraction such that $-S$ is ample over $Z$ for some component $S$ of $\rddown{B}$.  Let $C$ be a curve on $Z$.  Then there is a curve $D$ on $X$  whose image on $Z$ is $C$ and  such that
\[ \frac{L\cdot C}{d_C}=\frac{f^*L\cdot D}{d_D}\]
for any Cartier divisor $L$ on $Z$.
\end{lemma}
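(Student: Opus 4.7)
The plan is to split into two cases based on whether $C$ lies in the image of the exceptional locus of $f$. If $C\not\subseteq f(\Ex(f))$, I take $D$ to be the strict transform of $C$ on $X$: since $f|_D\colon D\to C$ is birational, $k(D)=k(C)$ forces $d_D=d_C$, and the projection formula yields $f^*L\cdot D=L\cdot C$, so the identity is immediate.

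The main case is $C\subseteq f(\Ex(f))$. Because $-S$ is $f$-ample, every $f$-exceptional curve is contained in $S$, so $\Ex(f)\subseteq S$ and in particular $C\subseteq f(S)$. My strategy is to base change to the algebraic closure $\overline K$, where Tsen's theorem is available, and then to descend the curve produced there. Concretely, let $\phi\colon X^{\overline K}\to X$ denote the base change, fix a geometric component $C^{\overline K}$ of $C\otimes_K\overline K$, and let $T$ be an irreducible component of $S^{\overline K}$ dominating $C^{\overline K}$. Since $-S^{\overline K}|_T$ is ample relative to $f^{\overline K}|_T\colon T\to C^{\overline K}$, this is a Mori-fibre-type morphism of a surface onto a curve over $\overline K$, and the Tsen-theoretic argument of \cite[Lemma 3.5]{birkar_existence_2017} — which is now applicable because we are over the algebraically closed field $\overline K$ — produces an integral curve $D^{\overline K}\subset X^{\overline K}$ mapping birationally onto $C^{\overline K}$. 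Setting $D:=\phi(D^{\overline K})$ gives an irreducible $K$-curve on $X$ whose base change to $\overline K$ consists of the Galois conjugates of $D^{\overline K}$ with an appropriate inseparable multiplicity. The identity is then checked after base change: each geometric component of $D\otimes_K\overline K$ maps birationally onto a Galois conjugate of $C^{\overline K}$, so by the projection formula for $f^{\overline K}$ one has $(f^{\overline K})^*\phi^*L\cdot_{\overline K}D^{\overline K}=\phi^*L\cdot_{\overline K}C^{\overline K}$; translating via the definitions of $d_C$ and $d_D$ as the base-change multiplicities of the respective reduced geometric components yields $\frac{L\cdot C}{d_C}=\frac{f^*L\cdot D}{d_D}$.

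The step I expect to be the main obstacle is invoking Birkar's Tsen-style argument on the component $T$ of $S^{\overline K}$: one must handle the reducibility and possible non-normality of $S^{\overline K}$ introduced by base change from an imperfect field, and verify that the relative ampleness of $-S$, restricted to a suitable component, genuinely produces the fibre-type surface-to-curve structure to which Birkar's original argument applies. The subsequent descent is comparatively routine, since the target formula was designed to be compatible with the multiplicity corrections $d_C$ and $d_D$ — this is exactly what the identity gains us in exchange for Tsen's theorem failing over $K$.
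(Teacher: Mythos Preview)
Your treatment of the case $C\not\subseteq f(\Ex(f))$ matches the paper's. For the main case your outline is correct in spirit, but there is a genuine gap at exactly the point you flag as the obstacle, and it is not resolved by the argument you sketch.

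The issue is this: over an imperfect $K$, a component $T$ of $S^{\overline K}$ need not be normal, even though $S$ itself is normal (which the paper notes, citing \cite[Corollary~7.17]{bhatt_globally}). Birkar's Tsen-type argument runs a surface MMP to reach a $\mathbb{P}^1$-bundle, and this requires working on the \emph{normalisation} of $T$. Once you normalise, the section $D^{\overline K}$ produced by Tsen lives on the normalisation, and there is no a priori reason its image in $X^{\overline K}$ is still birational onto $C^{\overline K}$: if $D^{\overline K}$ meets the non-isomorphism locus of the normalisation map generically, the pushforward can have higher degree over $C^{\overline K}$, and then your computation of $d_D$ breaks down. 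Saying ``Birkar's argument is now applicable because we are over $\overline K$'' skips precisely this point.

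The paper supplies the missing step. Apply adjunction on $S$ to get a dlt pair $(S,B_S)$ with $-(K_S+B_S)$ $f$-ample. Let $T$ be the normalisation of a component $S^{\overline K}$; by \cite[Theorem~1.1]{patakfalvi_singularities_2017} one has $\phi^*(K_S+B_S)=K_T+(p-1)\Delta+B_T$ with $\Delta\geq 0$, and by \cite[Theorem~1.1]{ji_waldron} the support of $\Delta$ contains the entire locus where $T\to S^{\overline K}$ fails to be an isomorphism. Since the generic fibre of $T\to C^{\overline K}$ is a curve with negative canonical degree and $p>2$, the divisor $\Delta$ is forced to be vertical over $C^{\overline K}$. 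Hence $T\to (C^{\overline K})^\nu$ is a contraction, the $\mathbb{P}^1$-bundle obtained after MMP has $\Delta$ vertical, and the Tsen section $D^{\overline K}$ avoids the conductor at its generic point. Only then does the descent you describe go through.
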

\begin{proof}
By perturbing we may assume that $B$ is a $\mathbb{Q}$-divisor.  If $C$ is not contained in the image of the exceptional locus of $f$, then the statement is clear, since we can let $D$ be the strict transform of $C$ on $X$, and $D\to C$ is an isomorphism over an open subset of $C$.  Then we have $f_*D=C$ and so the projection formula gives $L\cdot C=f^*L\cdot D$.  
Similarly we see that $d_C=d_D$ because we can choose $D^{\overline{K}}$ and $C^{\overline{K}}$ such that $f_{\overline{K},*}D^{\overline{K}}=C^{\overline{K}}$ since $D^{\overline{K}}\to C^{\overline{K}}$ is an 
isomorphism over an open subset of $C^{\overline{K}}$ when we compute it via the formula in \autoref{thm:cone}.  In particular, the same argument implies that
$$f_{\overline{K}}^*\phi_Z^*L\cdot_{\overline{K}} D^{\overline{K}}=\phi_Z^*L\cdot_{\overline{K}}C^{\overline{K}}$$ where $\phi_Z:Z\otimes_K\overline{K}\to Z$ are the base change to $\overline{K}$.

So we only need to show the case that $f$ is a divisorial contraction and $C$ is the image of the contracted divisor, which must be $S$.  Note that $S$ is normal by \cite[Corollary 7.17]{bhatt_globally}.
By adjunction we may write $K_S+B_S\sim_{\mathbb{Q}}(K_X+B)|_S$, where $(S,B_S)$ is dlt, and since this is a $(K_X+B)$-negative extremal contraction, we have that $-(K_S+B_S)$ is ample over $C$.  
 
We next claim that $f_*\mathcal{O}_S=\mathcal{O}_C$. If we pushforward the exact sequence \[0\to \mathcal{O}_X(-S)\to \mathcal{O}_X\to \mathcal{O}_S\to 0\] by $f$ we obtain a long exact sequence
$$0\to \mathcal{I}_C\to \mathcal{O}_Z\to f_*\mathcal{O}_S\to R^1f_*\mathcal{O}_X(-S)\to\cdots$$ where $\mathcal{I}_C$ is the ideal sheaf of $C$ in $Z$.  
  We have \(-S\sim K_X+B-S-(K_X+B)\) is ample over $Z$, so by 
  \cite[Proposition 2.3]{bernasconi_kollar}, it suffices to prove that Kawamata-Viehweg holds for \(f|_S\).  Since \(S\to C\) has relative dimension \(1\) this holds by \cite[Proposition 3.2]{tanaka_minimal_2016-1}. 

Let $S^{\overline{K}}$ and $C^{\overline{K}}$ be components of $(S\otimes_K\overline{K})_{\mathrm{red}}$ and $(C\otimes_K\overline{K})_{\mathrm{red}}$ respectively, chosen in such a way that we have a morphism
 $f^{\overline{K}}:S^{\overline{K}}\to C^{\overline{K}}$ induced by the base change of $f:S\to C$ to $\overline{K}$. 
Since formation of the Stein factorisation commute with flat base change and taking reduced subschemes, we see that $f^{\overline{K}}$ also satisfies $f^{\overline{K}}_*\mathcal{O}_{S^{\overline{K}}}=\mathcal{O}_{C^{\overline{K}}}$. 
  Note that by \autoref{thm:cone} we have that $\frac{L\cdot C}{d_C}=\psi^*L\cdot C^{\overline{K}}$ where $\psi:C^{\overline{K}}\to C$. 
Let $T$ be the normalisation of $S^{\overline{K}}$ with morphism $\phi:T\to S$.  Then by \cite[Theorem 1.1]{patakfalvi_singularities_2017} there is an effective Weil divisor $\Delta$ such that 
\[\phi^*(K_S+B_S)=K_T+(p-1)\Delta+B_T\]
where $B_T=\phi^*B_S$.
By \cite[Theorem 1.1]{ji_waldron}, the support of $\Delta$ contains the entire locus on which $T\to S^{\overline{K}}$ fails to be an isomorphism.  Since the generic fibre $D$ of $T\to C^{\overline{K}}$ is a curve such that $\deg K_D<0$, we see that $\Delta$ must be vertical over $C^{\overline{K}}$ since $p>2$.  Therefore $T\to (C^{\overline{K}})^{\nu}$ is a contraction, where $\nu$ denotes normalisation.

  Then let $T'$ be the minimal resolution of $T$, with 
\[\pi^*(K_T+(p-1)\Delta+B_{T})\sim_{\mathbb{Q}}K_{T'}+(p-1)\Delta_{T'}+B_{T'}\]
where $\Delta_{T'}$ is the strict transform of $\Delta$.
Since $K_{T'}$ is not pseudo-effective over $C^{\overline{K}}$, running a $K_{T'}$-MMP over $C^{\overline{K}}$ results in a Mori fibre space, which is a $\mathbb{P}^1$-bundle over  $(C^{\overline{K}})^\nu$.  Since $p>2$, every component of $\Delta_{T'}$ is vertical for this Mori fibre space.  Now by Tsen's theorem there is a curve $D^{\overline{K}}$ on $T$ which maps birationally to  $(C^{\overline{K}})^\nu$, and hence for any Cartier divisor $L^{\overline{K}}$ on $C^{\overline{K}}$ we have \[L^{\overline{K}}\cdot C^{\overline{K}}=\theta^*f^{{\overline{K}}*}L^{\overline{K}}\cdot D^{\overline{K}}.\]
This is not quite enough: to deduce the required formula, we need to know that the normalisation $\theta: T\to (S^{\overline{K}})_{\red}$ is an isomorphism at the generic point of $D^{\overline{K}}$.  However, this follows by the fact noted earlier, that every component of the conductor of $T\to S^{\overline{K}}$ is a component of $\Delta$, which is vertical over $C^{\overline{K}}$.
\end{proof}

\begin{lemma}\label{lem:dominating_curve}
Let $(X,B)$ be a klt pair of dimension $3$ over a  field $K$ of characteristic $p>5$, and $C$ a curve on $X$.  Let $\phi:W\to X$ be a log resolution of $(X,B)$.  Then there is a curve $D$ on $W$ such that the image of $D$ in $ X$ is $C$, and we have
\[ \frac{L\cdot C}{d_C}=\frac{\phi^*L\cdot D}{d_D}\]
for any Cartier divisor $L$ on $X$.
\end{lemma}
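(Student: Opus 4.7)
The natural approach is to take $D$ to be the strict transform of $C$ under $\phi$, which works cleanly whenever $C$ is not contained in $\phi(\operatorname{Ex}(\phi))$; the more delicate case where $C$ lies inside the image of the exceptional locus is handled by the same Tsen-style argument as in \autoref{lem:tsen}.

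In the strict-transform case, $\phi$ is birational, so $D$ is an integral curve on $W$ and $\phi|_D : D \to C$ is birational. The projection formula on $X$ immediately gives $\phi^*L \cdot D = L \cdot C$, and it only remains to verify $d_D = d_C$. For this I would apply the formula of \autoref{thm:cone}(4a) to both $d_C$ and $d_D$ and use functoriality of base change: if $\pi_X, \pi_W$ denote the base-change maps to $\overline{K}$ and $\phi_{\overline{K}}$ the induced map, then $\pi_W^*\phi^*L = \phi_{\overline{K}}^*\pi_X^*L$, and the projection formula on $W \otimes_K \overline{K}$ reduces the claim to $(\phi_{\overline{K}})_* D^{\overline{K}} = C^{\overline{K}}$ for a suitable choice of integral components. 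Because $\phi|_D$ is an isomorphism over some dense open $U \subset C$, and $U \otimes_K \overline{K}$ meets every integral component of $C \otimes_K \overline{K}$, the integral components of $D \otimes_K \overline{K}$ are in birational bijection with those of $C \otimes_K \overline{K}$ under $\phi_{\overline{K}}$; we pick $D^{\overline{K}}$ to match the given $C^{\overline{K}}$ and the desired identity of cycles is immediate.

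When $C \subset \phi(\operatorname{Ex}(\phi))$ the strict transform is not a curve, and instead $D$ must be built inside an exceptional divisor $E$ of $\phi$ with $\phi(E) \supset C$. The plan is to run the argument of \autoref{lem:tsen} on the restriction $\phi|_E : E \to \phi(E)$: pass to a component of $E \otimes_K \overline{K}$ mapping onto a chosen integral component $C^{\overline{K}}$ of $C \otimes_K \overline{K}$, take its normalisation, apply \cite{patakfalvi_singularities_2017} and \cite{ji_waldron} to bound the conductor and show it is vertical over $C^{\overline{K}}$, then run a $K_T$-MMP over $(C^{\overline{K}})^\nu$ to reduce to a $\mathbb{P}^1$-bundle on which Tsen's theorem produces a section $D^{\overline{K}}$; descending by $K$-rationality gives the required $D \subset W$. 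The main obstacle I anticipate, just as in \autoref{lem:tsen}, is the verticality of the conductor: this is exactly what guarantees that the normalisation is an isomorphism at the generic point of $D^{\overline{K}}$, so that the intersection computation on the normalisation descends correctly to $W$.
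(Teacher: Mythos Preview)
Your strict-transform case is correct and matches the paper. The gap is in the second case: you propose to run the Tsen argument of \autoref{lem:tsen} directly on a single exceptional divisor $E$ of $\phi$ with $\phi(E)\supset C$, but the hypotheses of \autoref{lem:tsen} are not available for such an $E$. In \autoref{lem:tsen} the map is a $(K_X+B)$-negative extremal contraction and $S$ is a component of $\lfloor B\rfloor$; this is what makes $S$ normal, makes $(S,B_S)$ dlt, and---crucially---makes $-(K_S+B_S)$ ample over $C$, so that the generic fibre of $T\to C^{\overline K}$ has negative canonical degree, the conductor is vertical, and the MMP on $T'$ terminates in a $\mathbb{P}^1$-bundle. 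For an arbitrary exceptional divisor of a log resolution none of this holds: $E\to\phi(E)$ need not be a contraction, $K_E$ need not be negative over $C$, and your MMP could terminate in a minimal model rather than a Mori fibre space, so Tsen's theorem never enters.

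The paper's route, following \cite[Lemma~3.5]{birkar_existence_2017}, is instead to factor $\phi$ via an MMP. Set $\Delta_W=\phi_*^{-1}B+\operatorname{Ex}(\phi)$, so that $(W,\Delta_W)$ is dlt and $K_W+\Delta_W\equiv_X F$ with $F\geq 0$ supported exactly on $\operatorname{Ex}(\phi)$ (here one uses that $(X,B)$ is klt); running a $(K_W+\Delta_W)$-MMP over $X$ contracts every exceptional divisor and ends at $X$. One then lifts $C$ step by step: each divisorial contraction genuinely satisfies the hypotheses of \autoref{lem:tsen} (the contracted divisor lies in $\lfloor\Delta\rfloor$ and the contraction is log-negative extremal), while at each flip the curve being lifted cannot lie in the flipped locus (it surjects onto the curve $C$ in $X$, whereas flipped curves map to points of the flipping base over $X$), so the strict transform suffices there.
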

\begin{proof}
This is now identical to the proof of \cite[Lemma 3.5]{birkar_existence_2017} except that we use \autoref{lem:tsen} in place of \cite[Lemma 3.6]{birkar_existence_2017}, and modify the statement accordingly.
\end{proof}

\begin{definition}\label{def:shokurov_polytope}\cite{shokurov_three-dimensional_1992}
Let $X$ be a klt, $\mathbb{Q}$-factorial threefold projective over a field $K$ of characteristic $p>5$.  Let $B\geq 0$ be a $\mathbb{Q}$-divisor on $X$ and 
$V$ be a finite dimensional rational affine space of the vector space of $\mathbb{R}$-Weil divisors.  Then we define the \emph{Shokurov polytope}
\[\mathcal{L}_B(V)=\{\Delta\mid 0\leq (\Delta-B)\in V, \ (X,\Delta)\mathrm{\ is\ lc}\}.\]
This is a rational polytope in $V$, since log resolutions exist in our situation.
\end{definition}

\begin{proposition}\cite[Proposition 3.8]{birkar_existence_2017}\label{prop:polytopes}
Let $X$ be a $\mathbb{Q}$-factorial projective klt threefold over a field $K$.  Let $V$ be a finite dimensional rational affine space of Weil divisors, and let $\mathcal{L}$ be a rational polytope inside $\mathcal{L}_A(V)$ for some $A$, and fix $B\in\mathcal{L}$.  Then there are real numbers $\alpha,\delta$ depending only on $(X,B)$ and $V$ such that 
\begin{enumerate}
\item if $\Gamma$ is any extremal curve and $(K_X+B)\cdot\frac{\Gamma}{d_{\Gamma}}>0$ then $(K_X+B)\cdot\frac{\Gamma}{d_{\Gamma}}>\alpha$.
\item  If $\Delta\in \mathcal{L}$, $||\Delta-B||<\delta$ and $(K_X+\Delta)\cdot R\leq 0$ for an extremal ray $R$ then \[(K_X+B)\cdot R\leq 0.\] 
\item\label{itm:polytope_polytope} Let $\{R_t\}_{t\in T}$ be a family of extremal rays of $\overline{NE}(X)$.  Then the set 
\[\mathcal{N}_T=\{\Delta\in\mathcal{L}\mid (K_X+\Delta)\cdot R_t\geq 0\mathrm{\ for\ all\ }t\in T\}\]
is a rational polytope.
\item Assume that $K_X+B$ is nef, $\Delta\in\mathcal{L}$ and that $X_i\dashrightarrow X_{i+1}/Z_i$ is a sequence of $(K_X+\Delta)$-flips which are $(K_X+B)$-trivial and $X=X_1$.  Then for any curve $\Gamma$ on any $X_i$ we have $(K_{X_i}+B_i)\cdot\frac{\Gamma}{d_\Gamma}>\alpha$ if $(K_{X_i}+B_i)\cdot \frac{\Gamma}{d_\Gamma}>0$ where $B_i$ is the birational transform of $B$. 
\item\label{itm:polytope_flips} In addition to the assumptions of $(4)$, suppose that $||\Delta-B||<\delta$.  If $(K_{X_i}+\Delta_i)\cdot R\leq 0$ for an extremal ray $R$ on some $X_i$ then $(K_{X_i}+B_i)\cdot R=0$ where $\Delta_i$ is the birational transform of $\Delta$. 
\end{enumerate}

\end{proposition}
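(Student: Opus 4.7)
The plan is to follow the proof of \cite[Proposition 3.8]{birkar_existence_2017} essentially verbatim, with \autoref{lem:dominating_curve} playing the role of the dominating curve lemma from the algebraically closed setting.  After a small perturbation using that $\mathcal{L}$ is a rational polytope, I may assume $B$ and all vertices $B_1,\dots,B_s$ of $\mathcal{L}$ are rational, and choose $N\in\mathbb{N}$ such that $N(K_X+B_j)$ is Cartier for every $j$.

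For (1), fix a log resolution $\phi:W\to X$.  Given any extremal curve $\Gamma$, \autoref{lem:dominating_curve} supplies an integral curve $D$ on $W$ with $\phi^*L\cdot D/d_D = L\cdot \Gamma/d_\Gamma$ for every Cartier divisor $L$ on $X$.  Applied to $L = N(K_X+B_j)$, this gives $N(K_X+B_j)\cdot \Gamma/d_\Gamma = \phi^*(N(K_X+B_j))\cdot D/d_D$, which is an integer by \autoref{thm:cone}(4.a).  Hence $(K_X+B_j)\cdot \Gamma/d_\Gamma \in \tfrac{1}{N}\mathbb{Z}$ for every $j$.  Meanwhile \autoref{thm:cone}(4.b), applied after a small ample klt perturbation of each $B_j$, bounds $|(K_X+B_j)\cdot \Gamma/d_\Gamma|$ from above by a fixed constant.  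Therefore each $(K_X+B_j)\cdot \Gamma/d_\Gamma$ takes only finitely many values, and writing $B=\sum r_j B_j$, the positive values of the convex combination $(K_X+B)\cdot \Gamma/d_\Gamma$ admit a positive lower bound $\alpha$ depending only on the $r_j$ and the finite set of possibilities.

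Part (2) is then a standard perturbation argument: for small enough $\delta$, any $\Delta$ with $\|\Delta-B\|<\delta$ satisfies $|(K_X+\Delta-B)\cdot\Gamma/d_\Gamma|<\alpha$ for every extremal $\Gamma$, so $(K_X+\Delta)\cdot R\leq 0$ forces $(K_X+B)\cdot R\leq 0$ by (1).  Part (3) follows from (2) together with rationality of $\mathcal{L}$, since only finitely many of the rays $R_t$ can impose nontrivial cutting conditions on a small neighbourhood of any point of $\mathcal{L}$.  Parts (4) and (5) propagate through the flip sequence: a $(K_X+B)$-trivial flip preserves $(K_{X_i}+B_i)\cdot\Gamma/d_\Gamma$ on curves not contracted, and contracted curves satisfy $(K_{X_i}+B_i)\cdot R=0$ by assumption, so the same constants $\alpha,\delta$ from (1) and (2) remain valid on every $X_i$.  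The main obstacle is the lattice argument in (1), which \autoref{lem:dominating_curve} combined with \autoref{thm:cone}(4.a) is precisely designed to provide.
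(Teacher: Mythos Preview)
Your argument for (1) is correct but the detour through $W$ via \autoref{lem:dominating_curve} is unnecessary there: since $N(K_X+B_j)$ is already Cartier on $X$, \autoref{thm:cone}(4.a) applied directly on $X$ already gives $(K_X+B_j)\cdot\Gamma/d_\Gamma\in\tfrac{1}{N}\mathbb{Z}$.  This is how the paper (citing \cite[Proposition 9.32]{bhatt_globally}) disposes of (1)--(3): one simply replaces every occurrence of $\Gamma$ by $\Gamma/d_\Gamma$ in Birkar's original argument, and no dominating curve is needed.

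The genuine gap is in your sketch of (4)--(5), which is precisely where \autoref{lem:dominating_curve} is actually required.  Saying that a $(K_X+B)$-trivial flip ``preserves $(K_{X_i}+B_i)\cdot\Gamma/d_\Gamma$ on curves not contracted'' does not control an arbitrary curve $\Gamma$ on $X_i$: such a curve may meet the flipped locus and correspond to no curve on $X_{i-1}$ or on $X$ at all.  Moreover the Cartier index of $N(K_{X_i}+B_{i,j})$ for the vertices $B_j$ is not bounded as $i$ grows, so one cannot simply rerun the lattice argument of (1) on each $X_i$ with the same $N$.  The fix, which is the content of the ``long displayed equation'' the paper points to in Birkar's proof, is to apply \autoref{lem:dominating_curve} to the map $\phi_i:W\to X_i$ from the \emph{fixed} resolution $W$ (which is birational to every $X_i$ since flips are small).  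This lifts $\Gamma$ to a curve $D$ on $W$ with $(K_{X_i}+B_i)\cdot\Gamma/d_\Gamma=\phi_i^*(K_{X_i}+B_i)\cdot D/d_D$; then $(K_X+B)$-triviality of the flips gives $\phi_i^*(K_{X_i}+B_i)=\phi^*(K_X+B)=\sum r_j\,\phi^*(K_X+B_j)$, where each $N\phi^*(K_X+B_j)$ is Cartier on $W$, and now the original $\alpha$ works uniformly in $i$.  Your final sentence locates the main obstacle in (1), but in fact (1)--(3) are routine once one normalises by $d_\Gamma$; the real content of \autoref{lem:dominating_curve} lies in (4)--(5).
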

\begin{proof}
The first three parts were already dealt with in \cite[Proposition 9.32]{bhatt_globally}, where we simply remarked that the proof from \cite{birkar_existence_2017} goes through after replacing all occurrences of curves $\Gamma$ with $\frac{\Gamma}{d_{\Gamma}}$.  As also remarked there, the proofs of the last two parts do not go through immediately, but \autoref{lem:dominating_curve} is exactly what we need to fix them.  In particular when one replaces curves $D$ and $\Gamma$ with $\frac{D}{d_D}$ and $\frac{\Gamma}{d_{\Gamma}}$ in the long displayed equation in the proof of part (4) in \cite[Proposition 3.8]{birkar_existence_2017}, the formula given by \autoref{lem:dominating_curve} is exactly what is needed to replace the condition ``$D\to\Gamma$ is birational''.
\end{proof}

\section{Nef reduction maps}\label{sec:nef_red}

The nef reduction map will allow us to obtain a first approximation of the semi-ample fibration.  It was first introduced over algebraically closed fields of characterstic zero in \cite{bauer_reduction_2002}  and in positive characteristic in \cite{cascini_base_2015}.  In this section we will introduce it for arbitrary ground fields.

\begin{lemma}\label{lem:descend_rational_map}
Let $\pi:Y\to X$ be a finite purely inseparable morphism of quasi-projective varieties, and $f:Y\dashrightarrow V$ a rational map.  Then there is a rational map $g:X\dashrightarrow Z$ and purely inseparable morphism $\phi:V\to Z$ making the diagram commute:
\[
\begin{tikzcd}
Y\ar[r,"\pi"]\ar[d,dashrightarrow,"f"] & X\ar[d,dashrightarrow,"g"]\\
V\ar[r,"\phi"] & Z
\end{tikzcd}
\]

\end{lemma}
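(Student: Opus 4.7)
The plan is to descend $f$ using the fact that a finite purely inseparable $\pi$ is inverted, up to a power of Frobenius, by an honest morphism in the opposite direction. Concretely, on any affine open $\Spec B \subset X$ with preimage $\Spec A \subset Y$ the extension $B \subseteq A$ is purely inseparable and finite, so $A^{p^n} \subseteq B$ for some $n \geq 0$; because $[K(Y):K(X)]$ gives a uniform bound on such $n$ and $Y$ is quasi-compact, a single $n$ works on a finite affine cover. The $p^n$-th power map is a ring homomorphism in characteristic $p$, so these local maps glue to a scheme morphism $\sigma : X \to Y$ satisfying
\[
\pi \circ \sigma = F^n_X \qquad\text{and}\qquad \sigma \circ \pi = F^n_Y,
\]
where $F^n$ denotes the iterated absolute Frobenius.

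With $\sigma$ in hand I would set $Z := V$, $\phi := F^n_V : V \to V$, and $g := f \circ \sigma : X \dashrightarrow V$. Commutativity of the square then follows from naturality of Frobenius:
\[
g \circ \pi \;=\; f \circ \sigma \circ \pi \;=\; f \circ F^n_Y \;=\; F^n_V \circ f \;=\; \phi \circ f.
\]
Since the absolute Frobenius is always purely inseparable, $\phi$ has the required property.

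The only genuine obstacle is producing $\sigma$. This reduces to the elementary facts that (i) the $p^n$-th power map is a ring homomorphism in characteristic $p$ (via $(a+b)^{p^n}=a^{p^n}+b^{p^n}$), and (ii) a purely inseparable finite extension of fields $K(X)\subseteq K(Y)$ satisfies $K(Y)^{p^n}\subseteq K(X)$ for some $n$; the gluing across affines is automatic because the $p^n$-th power map is canonical. If one instead insists on the conclusion over a fixed imperfect base field $K$ (i.e.\ that all maps be $K$-morphisms), one would replace the absolute Frobenius throughout by the relative Frobenius $F^n_{-/K}$ with target the $K$-Frobenius twist; but as stated the lemma only concerns morphisms of schemes, for which the construction above suffices.
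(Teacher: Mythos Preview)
Your argument is correct and follows the same core idea as the paper: factor a power of Frobenius through the purely inseparable $\pi$ to produce the descent. The paper's version differs only cosmetically. It first resolves $f$ to a morphism via the graph closure, then takes the Stein factorisation of $f\circ\alpha$ to manufacture a new target $Z$ together with a finite map $\beta:Z\to V$, and finally defines $\phi$ by $\phi\circ\beta=F_Z$. Your choice $Z=V$, $\phi=F_V^n$, $g=f\circ\sigma$ is more direct and equally valid for the lemma as stated; the paper's Stein factorisation is not needed for the bare statement, though it yields a $g$ with connected fibres, which is sometimes convenient downstream.

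One small point worth tightening: your justification for $A^{p^n}\subseteq B$ invokes only the function-field inclusion $K(Y)^{p^n}\subseteq K(X)$, which a priori gives $A^{p^n}\subseteq K(X)$, not $A^{p^n}\subseteq B$. To land in $B$ you need either that $X$ is normal (so $B$ is integrally closed and $a^{p^n}$, being integral over $B$ and lying in $K(X)$, falls into $B$), or the general fact that a finite universal homeomorphism of $\mathbb{F}_p$-schemes satisfies $A^{p^n}\subseteq B$ for large $n$ regardless of normality (e.g.\ Stacks Project Tag 0CND). Either route closes the gap with no change to your strategy; in the paper's applications the schemes involved are normal anyway.
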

\begin{proof}

By taking the normalisation of the closure of the graph of $f$, we may replace $Y$ and $X$ to assume that $f$ is a morphism.  Since $\pi$ is finite and purely inseparable there is a purely inseparable morphism $\alpha:X\to Y$ such that $\pi\circ\alpha=F_X^n$ is the absolute Frobenius of $X$.  Then let $g:X\to Z$ be the Stein factorisation of $f\circ \alpha$, with morphism $\beta:Z\to V$.  Then let $\phi$ be the purely inseparable morphism such that $\phi\circ\beta=F_Z^n$.  Then we have a commutative diagram: 
\[
\begin{tikzcd}
X\ar[r,"\alpha",swap]\ar[d,"g"]\ar[rr,"F_X^n", bend left=20] & Y\ar[r,"\pi",swap]\ar[d,"f"] & X\ar[d,"g"]\\
Z\ar[r,"\beta"]\ar[rr,"F_Z^n", bend right=20, swap] &V\ar[r,"\phi"] & Z
\end{tikzcd}
\]

The same map $g$ fits into the right hand side of the diagram by functoriality of Frobenius.
\end{proof}

\begin{proposition}\label{prop:nef_red_exists}
Let $X$ be a normal projective variety over an uncountable field $K$ with $H^0(X,\sO_X)=K$, and let $D$ a nef $\mathbb{R}$-divisor on $X$.  Then there is a separable extension $L/K$, a rational map $f:X_L\dashrightarrow Z$ to a projective variety $Z$, and an open subset $V\subset Z$ such that
\begin{enumerate}
\item\label{itm:nef_red_proper} $f$ is proper over $V$,
\item\label{itm:nef_red_triv} $D|_F\num 0$ for the very general fibres $F$ of $f$ over $V$,
\item\label{itm:nef_red_exact} if $x\in X_L$ is a very general point and $C$ a curve passing through $x$ then $D\cdot C=0$ if and only if $C$ is contained in the fibre of $f$ containing $x$.
\end{enumerate}
Furthermore, since $f$ is proper over $V$, we can replace $Z$ with the Stein factorisation to assume that $f_*\sO_{f^{-1}(V)}=\sO_V$.
\end{proposition}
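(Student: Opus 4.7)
The plan is to construct the nef reduction first over $\overline{K}$, where its existence is known in positive characteristic (\cite{cascini_base_2015}), and then descend to a finite separable extension of $K$ using the factorisation $K\subset K^{\mathrm{sep}}\subset\overline{K}$. The descent from $\overline{K}$ to $K^{\mathrm{sep}}$ is purely inseparable and will be handled by \autoref{lem:descend_rational_map}; the further descent from $K^{\mathrm{sep}}$ to a finite separable $L/K$ is by finite presentation.

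\textbf{Execution.} Since $K^{\mathrm{sep}}/K$ is separable, the base change $X_{K^{\mathrm{sep}}}$ is normal, and since $\overline{K}/K^{\mathrm{sep}}$ is purely inseparable, the morphism $X_{\overline{K}}\to X_{K^{\mathrm{sep}}}$ is a universal homeomorphism. Let $Y$ be the normalisation of $(X_{\overline{K}})_{\mathrm{red}}$. Applying the nef reduction theorem over the uncountable algebraically closed field $\overline{K}$ to the nef divisor pulled back to $Y$ produces a rational map $\bar f : Y \dashrightarrow \bar Z$ and an open $\bar V \subset \bar Z$ satisfying the analogues of (1)--(3). Because $Y\to(X_{\overline{K}})_{\mathrm{red}}$ is birational, $\bar f$ induces a rational map on $(X_{\overline{K}})_{\mathrm{red}}$, and since its target and defining data are of finite type, it is defined over some finite purely inseparable extension $K^{\mathrm{sep}}\subset K_1\subset\overline{K}$, giving $f_1:X_{K_1}\dashrightarrow Z_1$. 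Applying \autoref{lem:descend_rational_map} to the finite purely inseparable morphism $X_{K_1}\to X_{K^{\mathrm{sep}}}$ then yields a rational map $g:X_{K^{\mathrm{sep}}}\dashrightarrow Z'$. Writing $K^{\mathrm{sep}}$ as a filtered union of finite separable extensions of $K$, the pair $(g,Z')$ descends to a rational map $f:X_L\dashrightarrow Z$ over some finite separable $L/K$. Properties (1)--(3) are then inherited from those of $\bar f$ using that the normalisation is birational, that purely inseparable morphisms are universal homeomorphisms (so preserve fibre structure and intersection numbers with curves), and that ``very general'' predicates descend well across the field extensions in play. The final Stein factorisation step is formal.

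\textbf{Main obstacle.} The main technical difficulty I anticipate is to ensure that the ``very general'' conditions in (2) and (3) survive the descent: one must verify that the countable union of closed subvarieties of $Y$ that is to be avoided corresponds, after descent, to a countable union of closed subvarieties of $X_L$. Each exceptional subvariety on $X_{\overline{K}}$ is cut out by finitely many equations with coefficients in a finitely generated subextension, and its image closure in $X_L$ is a single closed subset; the orbit under $\mathrm{Gal}(K^{\mathrm{sep}}/K)$ yields at most countably many such closed subsets, which suffices. Once the bookkeeping of exceptional loci under the two stages of descent is settled, applying \autoref{lem:descend_rational_map} and finite presentation produces the desired object.
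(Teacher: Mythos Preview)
Your proposal is correct and follows essentially the same approach as the paper: construct the nef reduction map on the normalisation of $X_{\overline{K}}$ using \cite{cascini_base_2015}, then descend the purely inseparable part via \autoref{lem:descend_rational_map} and the separable part by finite presentation, finally verifying (1)--(3) via the projection formula and bookkeeping of the countable bad loci. The only cosmetic difference is the order of descent: the paper spreads out directly to a single finite extension $E/K$ and then factors $E/L/K$ with $E/L$ purely inseparable and $L/K$ separable, applying \autoref{lem:descend_rational_map} once to $Y_E\to X_L$; you instead route through $K^{\mathrm{sep}}$, descending first to $K_1/K^{\mathrm{sep}}$ purely inseparable, then to $K^{\mathrm{sep}}$ via \autoref{lem:descend_rational_map}, then to a finite $L/K$ by limits. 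Both orderings work and the verification of (2)--(3) is the same in spirit.
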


\begin{proof}
We first construct the map.  Let $Y$ be the normalisation of $X\otimes_K \overline{K}$, with morphism $\pi:Y\to X$, and let $D_Y=\pi^*D$.  Nefness is preserved by pullback by the projection formula, and so  $D_Y$ has a nef reduction map $g:Y\dashrightarrow Z_{\overline{K}}$ by \cite[Theorem 2.9]{cascini_base_2015}.  Let $E$ be a finite extension of $K$ such that $Y$, $Z_{\overline{K}}$ and $g$ are defined over $E$.   More precisely there is a rational map of varieties over $E$, $g_E:Y_E\dashrightarrow Z_E$ which base changes to $g$.  Let $L$ be the unique subfield of $E$ such that $E/L$ is purely inseparable and $L/K$ is separable.  It is sufficient to show that we can descend $g$ via $E/L$, but this follows from the more general \autoref{lem:descend_rational_map}.  In particular, use \autoref{lem:descend_rational_map} to fill in the diagram:
\[
\begin{tikzcd}
Y_E\ar[r]\ar[d,dashrightarrow]  &X_L\ar[d,dashrightarrow]\\
Z_E\ar[r]& Z
\end{tikzcd}
\]

We now have to show that the rational map defined this way has the required properties.  Property \autoref{itm:nef_red_proper} is clear.  
Suppose that $f$ is a very general fibre of $F$ over a point $v\in V$.  The preimage of $v$ in $Z_{\overline{K}}$ is a finite set of points, and we can ensure that these are all very general by choosing $v$ such that it avoids the image of the given countable union of proper subvarieties of $Z_{\overline{K}}$.
Hence for a curve $C$ in $F$, every component of $C\otimes_L \overline{K}$ lie in a very general fibre of $g$. This implies that condition \autoref{itm:nef_red_triv} is satisfied, by applying the projection formula and property \autoref{itm:nef_red_triv} over algebraically closed fields.

Finally, a very general point $x\in X_L$ lies under only very general points of $Y$ by choosing it in such a way that it avoids the images of the countably many bad subvarieties.  Property \autoref{itm:nef_red_exact} follows from the projection formula, by noting that for any curve $C$ through $x$, if $\widetilde{C}$ is a component of $(C\otimes_K L)_{\red}$ then $C\cdot D_L=0$ if and only if $\widetilde{C}\cdot D_{\overline{K}}=0$.

\end{proof}

\begin{definition}
We call the rational map constructed in \autoref{prop:nef_red_exists} the \emph{nef reduction map} of $D$. $\dim(Z)$ is called the \emph{nef dimension} of $D$, and denoted $n(D)$.  

Note that we always have $n(D)\geq \kappa(D)$ by \cite[Proposition 2.8]{bauer_reduction_2002}, and the fact that $\kappa(D)$ is preserved by pullbacks.
\end{definition}

\begin{remark}
It might be that the separable base change is not actually needed.
\end{remark}

\begin{proposition}\label{prop:max_nef_dim}
Let $X$ be a projective variety over an uncountable field $K$, and $A$ and $B$ be effective $\mathbb{R}$-divisors such that $A$ is nef and big and $K_X+A+B$ is nef.  

If $n(K_X+A+B)=\dim(X)$ then $K_X+A+B$ is big.
\end{proposition}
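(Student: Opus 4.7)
The plan is to assume for contradiction that $L := K_X + A + B$ is not big and derive a contradiction from $n(L) = \dim X$. The first observation is that if $n(L) = \dim X$, then the fibres of the nef reduction map over the open set $V$ produced in \autoref{prop:nef_red_exists} are zero-dimensional, so by property \autoref{itm:nef_red_exact} a very general point $x \in X_{K'}$ (over the separable extension $K'/K$) lies on no curve $C$ with $L\cdot C = 0$.

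For any $\epsilon > 0$, the divisor $L + \epsilon A$ is nef and big, and its difference with $K_X + B$ is $(1+\epsilon)A$, which is nef and big. Under the implicit klt hypothesis on $(X, B)$ needed for the applications, \autoref{thm:big_bpf} gives that $L + \epsilon A$ is semi-ample; let $f_\epsilon : X \to Y_\epsilon$ be the induced contraction. Since $L$ and $A$ are both nef, the curves contracted by $f_\epsilon$ are exactly those with $L \cdot C = 0 = A \cdot C$, which is independent of $\epsilon$. By the first paragraph such curves do not pass through very general points, so $f_\epsilon$ is generically finite, hence birational after Stein factorisation. This gives a single birational morphism $f : X \to Y$, and writing $L + \epsilon A = f^* H_\epsilon$ with linear dependence on $\epsilon$ yields $L = f^* L_Y$ and $A = f^* A_Y$, where $L_Y + \epsilon A_Y$ is ample on $Y$ for all small $\epsilon > 0$. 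In particular $L_Y$ is nef.

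It remains to deduce that $L_Y$ is big on $Y$ (equivalent to $L$ big on $X$ since $f$ is birational), and this is where I expect the main obstacle to lie. The natural tool is a duality between pseudo-effective divisors and movable curve classes: if $L_Y$ were nef but not big, then some strongly movable curve class on $Y$ would intersect $L_Y$ trivially, and would be represented by a covering family of irreducible curves on $Y$. A general member of this family would lift via the birational map $f$ to a curve through a very general point of $X$ with $L \cdot C = 0$, contradicting the property established in the first paragraph. The technical challenge is to have such a duality available for $Y$ in characteristic $p > 5$ over an imperfect field; for threefolds one should be able to descend the corresponding result from the algebraically closed case, since nefness, bigness, and the existence of covering families are well behaved under the base changes used to construct the nef reduction map in \autoref{prop:nef_red_exists}.
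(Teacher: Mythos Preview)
Your approach diverges substantially from the paper's, and it has genuine gaps.

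The paper's proof is a two-line reduction to the algebraically closed case. Let $Y$ be the normalisation of $X\otimes_K\overline{K}$ with $\pi:Y\to X$. By Tanaka's result on the behaviour of the canonical divisor under purely inseparable base change, $\pi^*(K_X+A+B)=K_Y+B_Y+\pi^*A$ with $B_Y\geq 0$ effective. By construction of the nef reduction map (which was defined via exactly this base change), $n(K_X+A+B)=n(K_Y+B_Y+\pi^*A)$. Now cite Birkar's version of the proposition over algebraically closed fields. Bigness is preserved under pullback and pushforward by $\pi$, so we are done. No klt hypothesis, no base point free theorem, no duality with movable curves.

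Your argument, by contrast, invokes \autoref{thm:big_bpf} to contract the $L$-and-$A$-trivial locus. That theorem requires $(X,B)$ to be a klt threefold pair with $\mathbb{Q}$-boundary; none of these are hypotheses of the proposition (you flag the klt issue yourself, but adding it changes the statement you are proving). Even granting those hypotheses, the final step --- producing a strongly movable curve class with $L_Y\cdot C=0$ from the failure of bigness of a nef divisor --- is precisely a BDPP-type theorem whose availability over imperfect fields in positive characteristic is not established in this paper. You suggest descending it from the algebraically closed case; but if you are willing to descend a statement from $\overline{K}$, you should descend the proposition itself, which is what the paper does and which bypasses all of the auxiliary machinery you introduced.
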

\begin{proof}
Let $Y$ be the normalisation of $X\otimes_K \overline{K}$ with morphism $\pi:Y\to  X$.  Then \[\phi^*(K_X+A+B)=K_Y+B_Y+\phi^*A\] for some  $B_Y\geq 0$ by \cite[Theorem 1.1]{tanaka_behavior_2016}, and \[n(K_X+A+B)=n(K_Y+B_Y+\phi^*A_Y)\] by definition of the nef reduction map.  Hence the statement follows from the case of algebraically closed fields \cite[Theorem 1.11]{birkar_existence_2016}, noting that this did not require the divisors to be boundaries.
\end{proof}

The following will be used later to ensure that a $D$-trivial MMP does not change the nef reduction map.

\begin{definition}
Let $f:X\dashrightarrow Z$ be the nef reduction map of $D$.  Since $f$ is proper over an open subset $U$ of $Z$, we can say that an integral divisor $E$ on $X$ is \emph{horizontal} over $Z$ if $E\cap f^{-1}(U)\to U$ is surjective, and \emph{vertical} otherwise.
\end{definition} 

\begin{proposition}\label{prop:same_nef_red_map}
Let $X$ be a $\mathbb{Q}$-factorial projective variety with nef $\mathbb{Q}$-divisor $D$, for which $f:X\dashrightarrow Z$ is the nef reduction map.  Suppose that $(X,B)$ is a
klt pair, and $\pi:X\to Y$ a $(K_X+B)$-negative birational extremal contraction such that $D\equiv_\pi 0$.
Then every component of the exceptional locus of $\pi$ is vertical over $Z$.
\end{proposition}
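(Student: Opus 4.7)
The plan is to argue by contradiction using property \autoref{itm:nef_red_exact} of the nef reduction. Suppose that some irreducible component $E \subset \Exc(\pi)$ is horizontal over $U$, meaning $f|_E : E \dashrightarrow U$ is dominant. Since $\pi$ is a birational extremal contraction with $D \equiv_\pi 0$, every curve contracted by $\pi$ lies in a single extremal ray and satisfies $D \cdot C = 0$; these curves form a covering family $\{C_t\}_{t \in T}$ of $\Exc(\pi)$.

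The first step is to factor $f$ through $\pi$. By extremality of $\pi$ combined with $D$-triviality, the pushforward $D_Y = \pi_* D$ is a nef $\mathbb{Q}$-Cartier divisor on $Y$ (in the divisorial case; with a small adaptation in the flipping case) satisfying $D \equiv \pi^* D_Y$. The uniqueness of the nef reduction up to birational equivalence of the target (and compatibility of nef reduction with pullback by birational morphisms, obtained exactly as in the proof of \autoref{prop:nef_red_exists}) then yields a nef reduction $g : Y \dashrightarrow Z$ of $D_Y$ with $f = g \circ \pi$. In particular $f(\Exc(\pi)) = g(\pi(\Exc(\pi)))$, and since $\pi$ is birational and extremal, $\dim \pi(\Exc(\pi)) \leq \dim X - 2$. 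Hence whenever $n(D) = \dim Z > \dim X - 2$, this already prevents $f(\Exc(\pi))$ from dominating $Z$, giving the conclusion.

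The main obstacle is the residual case $n(D) \leq \dim X - 2$, in which $\pi(\Exc(\pi))$ can a priori dominate $Z$ under $g$ (for instance, when $n(D) = 1$ and a divisor is contracted to a curve mapping finitely onto $Z$). To rule this out, I would exploit the horizontality assumption to select a very general $z \in U$ so that $F_z = f^{-1}(z) = \pi^{-1}(g^{-1}(z))$ meets $\Exc(\pi)$ in a non-empty subvariety, and then choose a generic $x \in F_z \cap \Exc(\pi)$. The plan is to enlarge the countable collection of ``bad'' subvarieties in the definition of ``very general'' used in \autoref{prop:nef_red_exists} to include those coming from the parameter space $T$ of the covering family $\{C_t\}$, so that a sufficiently generic $x \in F_z \cap \Exc(\pi)$ retains enough genericity to apply \autoref{itm:nef_red_exact} to the contracted curve $C_t$ through it, forcing $C_t \subset F_z$. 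Propagating this over very general $z$ would then force each fibre of $\pi|_{\Exc(\pi)}$ meeting a very general fibre of $f$ to be entirely contained in it, which contradicts horizontality of $E$ by a dimension count.

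The delicate step is genuinely the genericity argument in the last paragraph: since $\Exc(\pi)$ is by construction a proper subvariety of $X_L$, a point of $F_z \cap \Exc(\pi)$ is never ``very general in $X_L$'' in the literal sense of \autoref{prop:nef_red_exists}(3). I expect the resolution to come either from a suitable base-change to $\overline{K}$ (using the construction in \autoref{prop:nef_red_exists}, where one has access to finer nef-reduction properties over algebraically closed fields, e.g.\ via \cite{bauer_reduction_2002, cascini_base_2015}), or from a reformulation characterising the nef reduction in terms of covering families of $D$-trivial curves, so that the covering family $\{C_t\}$ restricted to $\Exc(\pi)$ can be tested against the definition directly.
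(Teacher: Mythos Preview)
Your proposal correctly sets up the descent of $D$ to a nef $D_Y$ on $Y$ (the paper obtains this uniformly via \autoref{thm:big_bpf}, so no separate treatment of the flipping case is needed) and takes a nef reduction $g$ of $D_Y$. But the gap you flag in your last paragraph is real, and neither of your suggested fixes is how the paper resolves it.

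The paper never tries to apply property~\autoref{itm:nef_red_exact} at a point of $\Exc(\pi)$. Instead it applies it only at genuinely very general points $x\in X$, which automatically lie outside $\Exc(\pi)$. For such $x$ it compares the fibre $F_x$ of $f$ through $x$ with the fibre $G_{\pi(x)}$ of $g$ through $\pi(x)$: a curve $C$ through $x$ lies in $F_x$ iff $D\cdot C=0$ iff $D_Y\cdot\pi_*C=0$ iff $\pi_*C\subset G_{\pi(x)}$, so $G_{\pi(x)}$ is the strict transform of $F_x$. The contradiction is then produced on $Y$, not on $X$. If $E=\Exc(\pi)$ is horizontal over $U$, then above a single point $w\in\pi(E)$ the fibre $\pi^{-1}(w)$ contains infinitely many points $e_i$ whose images $f(e_i)$ are distinct very general points of $Z$; each $e_i$ lies in a fibre $F_{x_i}$ containing a very general $x_i$. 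Since $G_{\pi(x_i)}$ is the strict transform of $F_{x_i}$ and $e_i\in F_{x_i}$ with $\pi(e_i)=w$, every $G_{\pi(x_i)}$ passes through $w$. But these are distinct irreducible fibres of $g$, which is absurd.

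So the missing idea is to exploit the collapsing of $\pi$ on the $Y$ side: the points $e_i$ all map to a common $w$, forcing distinct fibres of $g$ to meet. Your dimension count in the case $n(D)>\dim X-2$ is a shadow of this mechanism, but the paper's argument works uniformly without the case split, and does not require identifying the targets of $f$ and $g$ (it takes $g:Y\dashrightarrow V$ with its own target and only compares individual fibres).
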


\begin{proof}
Note that since $(X,B)$ is klt and $\pi$ is $(K_X+B)$-negative and $D$-trivial, $D\sim_{\mathbb{Q},\pi}0$ by \autoref{thm:big_bpf}.  Therefore there exists a nef $\mathbb{Q}$-Cartier divisor $D_Y$ on $Y$ such that $D\sim_{\mathbb{Q}}\pi^*D_Y$.  Let $g:Y\dashrightarrow V$ be the nef reduction map of $D_Y$.  We may replace $g$ and $f$ by their Stein factorisations to assume that their general fibres are irreducible.  

When we take a very general point $x\in X$, we can simultaneously assume that: 
\begin{enumerate}
\item Condition (3) of \autoref{prop:nef_red_exists} applies at $x$.
\item $x$ lies in $f^{-1}(U)$, so that $f$ is proper near $x$.
\item $\pi$ is an isomorphism near $x$, and $\pi(x)$ is a very general point of $Y$ for the purposes of Condition (3) of \autoref{prop:nef_red_exists} applied to $g$.
\item $g$ is proper near $\pi(x)$.
\item $x$ lies in an irreducible fibre of $f$, and $\pi(x)$ lies in an irreducible fibre of $g$. 
\end{enumerate}
We may assume each of these simultaneously since each independently holds away from countably many proper closed subvarieties of $X$ or equivalently of $Y$. 

For such a very general point $x$, let $F_x$ be the  fibre of $f$ which contains $x$, and $G_y$ be the fibre of $g$ through $y=\pi(x)$.
Then a curve $C$ through $x$ is contained in $F_x$ if and only if $C\cdot D=0$, which happens if and only if $\pi_*C\cdot D_Y=0$ by the projection formula.  This in turn happens if and only if $\pi_*C$ is contained in  $G_y$.  As a result, we see that $G_y$ is the strict transform  $F_x$ since they contain the same curves when restricted to the locus on which $\pi$ is an isomorphism.

Now suppose for contradiction that the exceptional locus $E\subset X$ of $\pi$ is horizontal over $U$.  Let $\pi|_E:E\to W\subset Y$ be the induced contraction.  Since $E\to Z$ is surjective, there must be some point $w\in W$, such that $\pi^{-1}(w)$ contains infinitely many distinct points $e_i\in E$ such that $f(e_i)$ is very general in $Z$ and each fibre over $f(e_i)$ contains a very general point $x_i\in X$.  By very generality, $G_{\pi(x_i)}$ is an irreducible fibre of $g$ for each $i$.   But also $G_{\pi(x_i)}$ is the strict transform of $F_{x_i}$ which each contain $e_i$.  Hence $G_{\pi(x_i)}$ each contain $w$, so cannot be disjoint fibres of $g$.  This gives the required contradiction.

\end{proof}

\section{ACC}\label{sec:acc}

We now prove the global ACC for surfaces over imperfect fields.  The key idea is to run induction on the transcendence degree of the ground field.  Note that even though we care only about projective surfaces over fields, we crucially use the full power of the relative LMMP for three dimensional excellent schemes developed in \cite{bhatt_globally}.  

\begin{lemma}\label{lem:geom_reduced}
	Let $(X,B)$ be a dlt projective surface pair over a field of characteristic $p>5$ such that $B$ is big and $K_X+B\num 0$.  Then $X$ is geometrically reduced over $k:=H^0(X,\sO_X)$.  
\end{lemma}
\begin{proof}
	First note that since geometric reducedness of normal varieties is determined at the generic point, it is enough to show that some birational model of $X$ is geometrically reduced over $k$.  
	Let $\phi:Y\to X$ be the minimal resolution of $X$, and define $B_Y$ by $\phi^*(K_X+B)=K_Y+B_Y$.  Note that the assumptions of the theorem also apply to the pair $(Y,B_Y)$.   Let $\psi:Y\to W$ be the result of applying a $K_Y$-MMP, so $W$ is regular and carries a Mori fiber space structure $g:W\to Z$.  Since $K_Y+B_Y\num 0$, $K_Y+B_Y\sim_{\mathbb{Q}} \psi^*(K_W+B_W)$ for a log canonical pair $(W,B_W)$.    If $\dim(Z)=0$ then $W$ is geometrically reduced by \cite[Corollary 1.4]{patakfalvi_singularities_2017}.  Hence we may assume that $\dim(Z)=1$. 
	
	If $\rddown{B_Y}=0$ then $X$ is log Fano type, since we may write $B_Y\sim_{\mathbb{Q}}A+E$ for some effective $E$ and ample $A$.  Then for $\epsilon$ sufficiently small, $(Y, (1-\epsilon B_Y)+\epsilon E)$  is klt and $-(K_Y+(1-\epsilon)B_Y+\epsilon E)$ is ample.  
	Therefore $X$ is geometrically reduced by \cite[Corollary 5.5]{bernasconi_tanaka}, and so we may assume that $\rddown{B_Y}\neq 0$.  
	
	First suppose there is some component $E$ of $\rddown{B_Y}$, and hence of $\rddown{B_W}$, which is dominant over $Z$.  Since we have $(K_W+B_W)|_E\sim_{\mathbb{Q}}K_E+B_E$ for some $B_E\geq0$, we have that $E$ is smooth over $H^0(E,\sO_E)$ since $p\geq 5$.  Furthermore, since the generic fiber of $g$ is a rational curve, the degree of $E\to Z$ is at most two and $Z$ is also smooth over $k = H^0(Z,\sO_Z)$.  Therefore smoothness of  $W$ follows from \cite[Proposition 2.18]{bernasconi_tanaka}.
	
	Therefore we can assume that no component of $\rddown{B_Y}$ is horizontal over $Z$.  
	For $z\in Z$, denote the fibre of $Y\to Z$ over $z$ by $F_z$.   Then let $I$ be the set of all $z\in Z$ for which there is a component of $\rddown{B_Y}$ contained in $F_z$. Denote $\sum_{z\in I} F_z=E+D$, where the  support of $E$ is contained in $\rddown{B_Y}$ and no component of $D$ is contained in $\rddown{B_Y}$.  By performing blow-ups of zero dimensional strata of $\rddown{B_Y}$, we may assume that no component of $D$ passes through a zero-dimensional strata of $\rddown{B_Y}$.  Since the strata of $\rddown{B_Y}$ are all the non-klt centers of $(Y,B_Y)$, it follows that $(Y,B_Y+\epsilon D)$ is dlt for $\epsilon$ sufficiently small.  But then for a general fiber $F'$ and $\delta\ll \epsilon $ we have $(Y,B_Y':=B_Y-\delta E+\delta D+\delta F')$ klt. But this boundary is still big, and so by the same argument as above, $Y$ is log Fano type and so we can  again apply \cite[Corollary 5.5]{bernasconi_tanaka} and are done.  
\end{proof}

\begin{proposition}\label{prop:relative_ACC}
Let $\Lambda\subset [0,1]$ be a DCC set of real numbers.  Then there is a finite subset $I\subset\Lambda$ with the following property:

Let $(X,B)$ be a  pair over a perfect field $k$ of characteristic $p>5$, and $f:X\to Z$ be a projective morphism of normal quasi-projective varieties such that
\begin{itemize}
\item $(X,B)$ is dlt
\item $B$ is $f$-big
\item $f$ has relative dimension  $2$.
\item The coefficients of $B$ are in $\Lambda$ 
\item $K_X+B\equiv_f 0$ 
\end{itemize}
Then the coefficient of each $f$-horizontal component of $B$ is in $I$.
\end{proposition}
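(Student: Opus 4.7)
The plan is to argue by contradiction, using adjunction to a horizontal component of $B$ to reduce the claim to the one-dimensional global ACC of \cite[Proposition 5.4]{das_boundedness}. Suppose the conclusion fails; then by the DCC of $\Lambda$, after passing to a subsequence we obtain pairs $(X_i, B_i) \to Z_i$ satisfying the hypotheses together with $f_i$-horizontal components $S_i \subseteq B_i$ of coefficient $b_i \in \Lambda$, with $b_i$ strictly increasing toward some limit $b_\infty \leq 1$.

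First I would put each $(X_i, B_i)$ into a normal form where the varying coefficient $b_i$ is visible in the adjunction data of some component of $\lfloor B_i \rfloor$. Concretely, using the cone theorem \autoref{thm:cone}, the relative MMP \autoref{thm:relative_mmp} over $Z_i$, and the Shokurov polytope control of \autoref{prop:polytopes}, I would modify $(X_i, B_i)$ so that there is an $f_i$-horizontal component $T_i \subseteq \lfloor B_i \rfloor$ meeting $S_i$ along a nonempty divisor of $T_i^\nu$. One approach is to bump the coefficient of $S_i$ up to $1$ via $B_i \mapsto B_i+(1-b_i)S_i$ and then run a $(K+B)$-MMP with scaling over $Z_i$ to restore the condition $K_{X_i}+B_i \equiv_{f_i} 0$; alternatively, when $S_i$ already lies in $\lfloor B_i \rfloor$, one manufactures $T_i$ by adding a general member of a sufficiently positive ample linear system via \autoref{lem:add_ample}.

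Next, I would apply adjunction to $T_i$. By \autoref{prop:adjunction_coefficients}, the different $B_{T_i^\nu}$ has coefficients of the explicit form $\frac{m-1}{m} + \sum \frac{l_j c_j}{m}$, where the $c_j$ range over coefficients of components of $B_i$ other than $T_i$ that meet $T_i$; in particular, the coefficient of the divisor on $T_i^\nu$ induced by $S_i \cap T_i$ is a positive-integer combination involving $b_i$ as a summand. Since $T_i$ is $f_i$-horizontal and $f_i$ has relative dimension two, the map $f_i|_{T_i^\nu} \colon T_i^\nu \to Z_i$ is projective of relative dimension one, $K_{T_i^\nu}+B_{T_i^\nu} \equiv 0$ over $Z_i$, and $B_{T_i^\nu}$ remains big over $Z_i$. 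Passing to the generic fibre gives a one-dimensional dlt log Calabi--Yau pair over the function field $k_i(Z_i)$ whose boundary has coefficients in the DCC set $\mathfrak{G}(\Lambda)$ of \autoref{prop:adjunction_coefficients}. Applying \cite[Proposition 5.4]{das_boundedness} forces these coefficients into a fixed finite subset of $\mathfrak{G}(\Lambda)$, and a standard DCC-ACC intersection argument applied to the adjunction formula then confines $b_i$ to a finite subset of $\Lambda$, contradicting strict monotonicity.

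The main obstacle I expect is the first step: producing a dlt birational modification on which $b_i$ is cleanly recoverable via adjunction to a floor component $T_i$ transverse to $S_i$. In residue characteristic $p > 5$ over possibly imperfect fields $k_i(Z_i)$, the standard perturbation-and-MMP arguments from characteristic zero do not transfer directly. One must carefully combine the cone theorem \autoref{thm:cone}, the relative MMP \autoref{thm:relative_mmp} over the excellent base $Z_i$, and the uniform polytope bounds of \autoref{prop:polytopes} to verify that $S_i$ is not contracted, that the modified pair remains dlt and $f_i$-big with coefficients still in $\Lambda$, that a suitable transverse $T_i$ exists, and that all of this can be done uniformly across the sequence so that the final contradiction can in fact be extracted.
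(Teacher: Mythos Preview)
Your instinct to reduce via adjunction is right, but the construction of the floor component $T_i$—which you correctly flag as the crux—does not work as proposed. Bumping the coefficient of $S_i$ to $1$ and running an MMP cannot ``restore'' $K_{X_i}+B_i\equiv_{f_i}0$: since $K_{X_i}+B_i+(1-b_i)S_i\equiv_{f_i}(1-b_i)S_i$ with $S_i$ horizontal, the MMP ends at a minimal model on which this class is still nontrivial. And even once $S_i$ sits in the floor, adjoining to $S_i$ produces a boundary on $S_i^\nu$ whose coefficients involve only the \emph{other} components of $B_i$, so $b_i$ itself is no longer visible; the Hacon--M\textsuperscript{c}Kernan--Xu device that handles this in characteristic zero couples global ACC with ACC for log canonical thresholds in a joint induction, but here \autoref{thm:acc_for_lct} is deduced \emph{from} the present proposition, so invoking it would be circular. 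Your alternative of adding a general ample divisor $T_i$ destroys $K+B\equiv_f 0$ outright. There is also a dimension problem you have overlooked: \autoref{thm:cone}, \autoref{thm:relative_mmp}, and \autoref{prop:polytopes} are threefold statements, whereas $\dim X_i=\dim Z_i+2$ is unbounded.

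The paper's solution is to build the floor component in the \emph{vertical} direction and induct on $\dim Z$ rather than reduce directly to curves. One cuts $Z$ by a general hyperplane $H$, localizes at its generic point $\eta$ so that $X_\eta$ is genuinely three-dimensional over the DVR $Z_\eta$, and takes a dlt model of $(X_\eta,B_\eta+F_\eta)$ with $F_\eta$ the special fibre; this forces the entire special fibre into $\lfloor\Gamma\rfloor$. After a threefold relative MMP over $Z_\eta$ (now legitimately covered by \autoref{thm:relative_mmp}) to make $K+\Gamma$ semi-ample, one spreads back out over $Z$ and adjoins to a floor component $S$ lying over $H$. The horizontal divisor whose coefficient is in question meets $S$, so that coefficient appears in the adjoined boundary via \autoref{prop:adjunction_coefficients}, and one has reproduced the same hypotheses over the base $H$ of dimension $\dim Z-1$. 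The base case $\dim Z=0$ is the known surface result over perfect (hence algebraically closed) fields, and \cite[Proposition~5.4]{das_boundedness} is never invoked.
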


\begin{proof}
\emph{Step 1: Easy reductions}

Since the statement is local at the generic fibre of $f$, we are free to shrink $Z$ as required.  We may also replace $Z$ by the Stein factorisation of $f$ to assume that $f_*\sO_X=\sO_Z$.    Denote the function field of $Z$ by $K$.
Let $\xi$ be the generic point of $Z$, and $X_{\xi}$ the generic fibre of $f$, which is a projective variety of dimension $2$.  $X_{\xi}$ is geometrically reduced by \autoref{lem:geom_reduced}.

\emph{Step 2: Preparing for induction.}

We do induction on $\dim(Z)$.  First suppose $\dim(Z)=0$, i.e. $X$ is a surface which is projective over a perfect field $k$.  If 
$k$ is algebraically closed, this is dealt with in \cite[Proposition 11.7]{birkar_existence_2016}. Otherwise performing a base change from a perfect field $k$ to $\overline{k}$ preserves the assumptions, and is enough to conclude the result.

So we may assume that $\dim(Z)>0$, and that the Proposition holds whenever the base has dimension $\dim(Z)-1$.   Let $H$ be a general hyperplane on $Z$ and let $F=f^*H$.  
Step 2 showed that the  fibres of $f$ over a non-empty open subset of $Z$ are integral.  Hence we can choose $H$ in such a way that $F=f^*H$ is integral over the generic point of $H$.  Furthermore if it is not integral everywhere, it must fail over some proper closed subset of $H$, so we can assume that $F$ is integral by shrinking $Z$.

Let $\eta$ be the generic point of $H$, and consider the localisation $Z_\eta$ which is a spectrum of a DVR, and the base change $X_\eta:=X\otimes_Z Z_{\eta}$.  Since $X_{\eta}$ is of dimension three and projective  over the excellent base $Z_{\eta}$, we may apply LMMP techniques to it using \cite{bhatt_globally}.  Note that the generic fibre of $X_\eta\to Z_{\eta}$ is isomorphic to the generic fibre of $f$.

Now let $\pi:(Y_\eta,\Gamma)\to X_{\eta}$ be a $\mathbb{Q}$-factorial dlt model of $(X_{\eta},B_{\eta}+F_{\eta})$.  To be more exact, it is the model obtained in \cite[Corollary 9.21]{bhatt_globally}, which in our situation has the following properties:
\begin{itemize}
\item  $\Gamma$ is the sum of the strict transform of $B_{\eta}+F_{\eta}$, and the reduced exceptional divisor of $\pi$, by definition
\item $(Y_{\eta},\Gamma)$ is a dlt pair.
\item  $K_{Y_{\eta}}+\Gamma$ is nef over $X$. 
\item  $\pi^{-1}(F_{\eta})$, the special fibre of $Y_\eta\to Z_\eta$, is contained in $\rddown{\Gamma}$, since $\Supp(\pi^{-1}(F_{\eta}))=\Supp(\pi^*f^*H_{\eta})$ is a Cartier divisor, hence any irreducible component of it is either a component of the strict transform of $F$, or an exceptional divisor.
\item $(K_{Y_{\eta}}+\Gamma)|_{Y_{\eta}\setminus \pi^{-1}(F_{\eta})}\equiv_f 0$,
 because the non-lc locus of $(X,B+F)$ is contained entirely in $\Supp(F)$, and so $K_{Y_{\eta}}+\Gamma$ is just the crepant pullback of $K_X+B$ away from $F$.
\end{itemize}

\emph{Step 3: Reducing to the case where $K_{Y_{\eta}}+\Gamma$ is semi-ample over $Z_{\eta}
$.}

Now we claim that we may run a terminating $(K_{Y_{\eta}}+\Gamma)$-MMP over $Z_\eta$.  
Since $K_{Y_{\eta}}+\Gamma$ is numerically trivial over the generic point of $Z_{\eta}$, it is $f$-pseudo-effective.  Therefore there is a $(K_{Y_{\eta}}+\Gamma)$-MMP over $Z_\eta$ which terminates by \cite[Proposition 9.20]{bhatt_globally}.  Furthermore, the MMP only contracts rays in the special fibre, since $K_{Y_\eta}+\Gamma$ is already numerically trivial on the generic fibre. 
 Hence the MMP terminates with a log minimal model $g:(W_{\eta},\Gamma_{W_\eta})\to Z_{\eta}$ whose generic fibre is isomorphic to that of $(Y_{\eta},\Gamma_{Y_\eta})\to Z_{\eta}$.  We now claim that $K_{W_{\eta}}+\Gamma_{W_\eta}$ is in fact $g$-semi-ample.  By \autoref{thm:cascini_tanaka}, it is sufficient to show that its restriction to every fibre of $g$ is semi-ample.  But since $Z_{\eta}$ is a DVR, there are exactly two fibres to check.  The restriction to the generic fibre is semi-ample by abundance for surfaces over imperfect fields \cite{tanaka_abundance_2015}.  For the special fibre, note that the fibre over $\eta$ is entirely contained in the support of $\rddown{\Gamma_{W_{\eta}}}$.  Hence we may apply adjunction to the fibre and abundance for semi-log canonical surfaces \cite{posva_abundance}.  Let $W_\eta\to V_\eta$ be the projective contraction over $Z_{\eta}$ associated to the semi-ample $K_{W_\eta}+\Gamma_{W_\eta}$.  Since $K_{W_{\eta}}+\Gamma_{W_\eta}$ was numerically trivial on the generic fibre and $f_*\sO_{W_\eta}=\sO_{Z_\eta}$, $V_\eta\to Z_{\eta}$ is birational, and in fact an isomorphism since $Z_{\eta}$ is regular and of dimension one.

\emph{Step 4: Spreading out over $Z$}

We have obtained a dlt pair $(W_{\eta},\Gamma_{W_{\eta}})$ such that $K_{W_{\eta}}+\Gamma_{W_{\eta}}$ is $\mathbb{Q}$-linearly trivial over $Z_{\eta}$.  
We now claim that we can spread this out over some open subset of the original $Z$ to obtain a corresponding projective morphism $W\to Z$ and dlt pair $(W,\Gamma_W)$ such that $K_W+\Gamma_W\sim_{\mathbb{Q},Z}0$.  
We can always spread out to form a pair $(W,\Gamma_W)$ which is projective over some open subset of $Z$, and 
since we know that the locus over which $K_W+\Gamma_W$ is $f$-semi-ample is open, we may shrink $Z$ without removing the generic point of $H$ in a way which ensures that $K_W+\Gamma_W$ is $f$-semi-ample.

Furthermore, if we take a log resolution $V_\eta\to W_\eta$ which verifies that $(W_\eta,\Gamma_{W_{\eta}})$ is dlt using \cite[Proposition 2.40]{kollar_birational_1998}, we can spread out $V_\eta$ over some open neighbourhood of $\eta$ in such a way that it remains a log resolution.
Furthermore by further shrinking $Z$ we can ensure that every exceptional divisor has image containing the generic point of $H$, and hence its log discrepancy is determined by that on $(W_\eta,\Gamma_{W_\eta})$.  This then verifies that $(W,\Gamma_W)$ is dlt after sufficiently shrinking $Z$.  Note that the coefficients of $\Gamma_W$ are drawn from the set $\Lambda\cup\{1\}$, which remains DCC.

\emph{Step 5: Adjunction}

Pick a component $C$ of $B$ which is horizontal over $Z$, whose coefficient we wish to show comes from a finite set. Since over the generic point of $Z$, $(Y_{\eta},\Gamma_{Y_{\eta}})$ was just the crepant pullback of $(X,B)$, and furthermore $Y_{\eta}\to W_{\eta}$ is an isomorphism over the generic point of $Z$, we see that the  strict transform $C_W$ of $C$ on $W$ exists.  Furthermore, it has the same coefficient in $\Gamma_W$ as it had in $B$.  
 Since $C_W$ is horizontal, we may choose some component $S$ of $\rddown{\Gamma_W}$ which is surjective over $H$ and such that $C|_S$ is horizontal over $H$.  
Let $K_S+\Gamma_S=(K_W+\Gamma_W)|_S\sim_{\mathbb{Q},H} 0$.

Comparing coefficients via adjunction \autoref{prop:adjunction_coefficients}
shows that the coefficient of each prime divisor $V\subset S$ in $\Gamma_S$ has the form 
\[a=\frac{l-1}{l}+\sum\frac{b_im_i}{l}+\frac{cn}{l}\]
where $l\in\mathbb{N}$, $m_i,n\in\mathbb{N}\cup \{0\}$, $c$ is the coefficient of $C$ in $\Gamma_W$, and $b_i'$ are the coefficients of the other components of $\Gamma_W$.
By the induction hypothesis, only finitely many of the numbers of this form can actually occur.  Therefore there can be only finitely many possibilities for $c$ which actually occur by \cite[Lemma 5.2]{hacon_acc_2014}.

\end{proof}

\begin{proof}[Proof of \autoref{thm:global_acc}]
We reduce the statement to \autoref{prop:relative_ACC} by spreading out.  Suppose that $(X,B)$ is a dlt pair over a field $K$ satisfying the requirements in \autoref{thm:global_acc}.  Furthermore, let $Z$ be the non-snc locus of $(X,B)$, and let $\pi:Y\to X$ be a log resolution such that every exceptional divisor over $Z$ has positive log discrepancy.  
Let $E$ be a transcendental extension of $\mathbb{F}_p$ or $\mathbb{Q}$ such that all equations of some quasi-projective embeddings of $X$, every component of $B$, $Z$, $Y$ and $\pi$ are all contained in $E$.  As a result there is a pair $(X_E, B_E)$ over $E$, and birational model $\pi:Y_E\to X_E$ which base change to the original pairs under field extension to $K$.  The set-up over $E$ satisfies all the same assumptions as the original one over $K$, except now it forms the generic fibre of a  projective morphism $f:\mathcal{X}\to \mathcal{V}$ where $E$ is the function field of $\mathcal{V}$ and with a pair $(\mathcal{X},\mathcal{B})$.  We claim that after shrinking $\mathcal{V}$ sufficiently this set-up satisfies the assumptions of \autoref{prop:relative_ACC}.  The only one which is not immediate is dlt.  However, since we also spread out a log resolution $\pi:\mathcal{Y}\to \mathcal{X}$, we may also shrink $\mathcal{V}$ to ensure that this forms a log resolution of $(\mathcal{X},\mathcal{B})$, and the exceptional divisors (and their log discrepancies) match those of $\pi:Y\to X$.  Hence $(\mathcal{X},\mathcal{B})$ is dlt, and so the theorem follows immediately from \autoref{prop:relative_ACC}  applied to this family if the characteristic of $K$ is positive and \cite[Theorem 1.5]{hacon_acc_2014} if the characteristic of $K$ is zero.
\end{proof}

Now the ACC for log canonical thresholds for threefold singularities with residue characteristic $p>5$:

\begin{proof}[Proof of \autoref{thm:acc_for_lct}]
Suppose we have a log canonical threefold pair $(X,B)$ and $\mathbb{R}$-Cartier divisor $M$, and let $\lambda=\mathrm{lct}(M,X,B)$.  We may assume that no component of $M$ is a log canonical center of $(X,B+\lambda M)$ since the coefficient of this component would be $1=b+\lambda m$ where $b$ is its coefficient in $B$ and $m$ is its coefficient in $M$.  Since $b$ and $m$ come from DCC sets, $\frac{1-b}{m}$ comes from an ACC set.  Therefore we may assume that there is a log canonical center $Z$ of dimension at most $1$.  Let $\pi:Y\to X$ be a $\mathbb{Q}$-factorial dlt model of $(X,B+\lambda M)$ with
\[\pi^*(K_X+B+\lambda M)=K_Y+B_Y+\lambda M_Y+E\]
where $B_Y$ and $M_Y$ are the strict transforms of $B$ and $M$ respectively, and $E$ is the reduced exceptional divisor.  Choose a component $F$ of $E$ which dominates $Z$, and such that some component of $M_Y$ is horizontal over $Z$.

Applying adjunction to $F$ gives 
$$K_{F}+\Delta_{F}=(K_{Y}+B_{Y}+\lambda M_Y+E)|_{F}$$
where a component of $\Delta_F$ has  coefficient of the form:

\[a=\frac{l-1}{l}+\sum\frac{b_ik_i}{l}+\sum\frac{\lambda m_in_i}{l}\]

where $l\in \mathbb{N}\cup\{\infty\}$, $k_i,n_i\in\mathbb{N}\cup\{0\}$, $b_i$ are the coefficients of $B$ and $m_i$ are the coefficients of $M$.  In particular, since $M$ is horizontal over $Z$, there is at least one component of $\Delta_F$ such that $n_i\neq 0$ for some $i$.  In particular, restricting to the generic point of $Z$ gives a pair satisfying the hypotheses of \autoref{thm:global_acc}, and hence the possible coefficients $a$ come from a finite set, either by \autoref{thm:global_acc} if $F$ has dimension 2 or by \cite[Proposition 5.4]{das_boundedness} if $F$  has dimension $1$. 
But this implies that only finitely many possibilities for $\lambda$ can occur by \cite[Lemma 5.2]{hacon_acc_2014}.
\end{proof}

\begin{proof}[Proof of \autoref{cor:lc_termination}]
The proof is as in \cite{birkar_acc_2005}, and is written in detail in \cite[Theorem 1.6]{waldron_lmmp_2017}.  The necessary inputs for the argument to work are ACC for log canonical thresholds (\autoref{thm:acc_for_lct}), special termination \cite[Theorem 6.6]{das_waldron_imperfect}, a crepant $\mathbb{Q}$-factorialisation \cite[Corollary 9.21]{bhatt_globally} and the ability to run a (not necessarily terminating) LMMP for a  $\mathbb{Q}$-factorial dlt pair \cite[Theorem 1.1, Theorem 1.3, Theorem 1.5]{das_waldron_imperfect} or \cite[Theorem 9.28, Theorem 9.33, Theorem 9.14]{bhatt_globally}. 
\end{proof}

\section{Goodness}\label{sec:goodness}

The aim of this section is to show that if $(X,B)$ is klt, $B$ is big and $K_X+B$ is nef, then the nef dimensions and Kodaira dimensions of $K_X+B$ are equal.
The following Lemma was originally proved by Kawamata \cite{kawamata_pluricanonical_1985}, and the  version in our current generality follows immediately from that in \cite[Lemma 9.25]{bhatt_globally}.

\begin{lemma}\label{lem:kawamata_lemma}\cite[Lemma 9.25]{bhatt_globally}
Let $f:X\to T$ be a projective and surjective contraction between normal varieties over a field $K$.  Let $L$ be a $\mathbb{Q}$-Cartier $\mathbb{Q}$-divisor on $X$ which is nef over $T$ and such that $L|_F$ is semi-ample, where $F$ is the generic fibre of $f$.  Assume that $\dim(X)\leq 3$.  Then there exists a commutative diagram 
\[
\begin{tikzcd}
X'\ar[r,"\phi"]\ar[d,"f'"] &X\ar[d,"f"]\\
Z\ar[r,"\psi"] & T\\
\end{tikzcd}
\]
such that $\phi$ and $\psi$ are projective birational, $f'$ agrees with the map induced by $\phi^*L$ over the generic point of $T$, and 
\[\phi^*L\sim_{\mathbb{Q}}f'^*D.\]
for some $\mathbb{Q}$-Cartier $\mathbb{Q}$-divisor $D$ on $Z$. 
\end{lemma}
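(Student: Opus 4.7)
The plan is to spread out the semi-ample fibration induced by $L|_F$ on the generic fibre to a genuine morphism $f':X'\to Z$ over $T$, after replacing $X$ by a birational model $X'$, and then to show that the pulled-back divisor $\phi^*L$ is $\mathbb{Q}$-linearly equivalent to a pullback from $Z$.

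First I would construct the diagram. Semi-ampleness of $L|_F$ produces a contraction $g:F\to Z_F$ to a normal projective variety over the function field of $T$, together with an ample $\mathbb{Q}$-Cartier divisor $A_F$ on $Z_F$ satisfying $L|_F\sim_{\mathbb{Q}}g^*A_F$. One can spread $Z_F$ out to a normal variety $\psi:Z\to T$ that is projective birational and has generic fibre $Z_F$, for instance by normalising the closure of the graph of $g$ inside $X\times_T\overline{W}$ for a suitable compactification $\overline{W}$ of $Z_F$ over $T$. Resolving the indeterminacies of the resulting rational map $X\dashrightarrow Z$ via \autoref{thm:resolution} yields a projective birational morphism $\phi:X'\to X$ and a morphism $f':X'\to Z$ making the square commute and agreeing with $g$ on the generic fibre, as required.

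Next I would reduce the statement to a descent problem. Extend $A_F$ to a $\mathbb{Q}$-Cartier divisor $A$ on $Z$, and set $N:=\phi^*L-f'^*A$. By construction $N$ is $\mathbb{Q}$-linearly trivial on the generic fibre of $f'$, hence is $f'$-numerically trivial and every component of its support is vertical over $Z$. It then suffices to establish $N\sim_{\mathbb{Q}}f'^*E$ for some $\mathbb{Q}$-Cartier divisor $E$ on $Z$, since we may then take $D:=A+E$.

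The hard part is exactly this descent step: promoting $f'$-numerical triviality of $N$ to a $\mathbb{Q}$-linear equivalence with a pullback from the (possibly singular, positive-characteristic) base $Z$. In characteristic zero this follows directly from Kawamata--Viehweg vanishing, as in the original argument of Kawamata; in our setting it instead has to be routed through the relative base point free theorem and relative MMP in dimension three, essentially \autoref{thm:big_bpf} and \autoref{thm:relative_mmp}, applied after a small ample perturbation of $\phi^*L$ and a $\mathbb{Q}$-factorial dlt modification of $X'$ over $Z$. This is where the hypothesis $\dim(X)\leq 3$ becomes essential, and it is also why the whole argument can be delegated to the more general statement proved in \cite[Lemma 9.25]{bhatt_globally}.
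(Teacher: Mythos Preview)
The paper does not supply its own proof of this lemma: it attributes the result to Kawamata and states that the version in the required generality follows immediately from \cite[Lemma 9.25]{bhatt_globally}. Your proposal is consistent with this treatment, since after sketching the construction you also delegate the substantive descent step to that same reference.

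Two small slips in your sketch are worth flagging. First, you write that $\psi:Z\to T$ is ``projective birational and has generic fibre $Z_F$''; these are compatible only when $Z_F$ is a point, i.e.\ when $L|_F\sim_{\mathbb{Q}}0$. That is indeed the situation in every application in the paper (in \autoref{prop:pullback_from_nef_red} one first checks $K_F+B_F\sim_{\mathbb{Q}}0$ via abundance for surfaces and curves before invoking the lemma), and it is the case treated in Kawamata's original argument, but the hypothesis as stated is only that $L|_F$ is semi-ample. If $L|_F$ had positive Iitaka dimension your spread-out $Z$ would have $\dim Z>\dim T$ and $\psi$ would not be birational; so your construction handles exactly the case the paper needs, not the full stated generality. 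Second, from $N|_{F'}\sim_{\mathbb{Q}}0$ on the generic fibre you only get $\mathbb{Q}$-linear triviality over the generic point of $Z$, not ``$f'$-numerical triviality'' in the sense of being numerically trivial on every fibre; arranging the latter (or bypassing it) is precisely part of the descent step you defer to the cited reference.
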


\begin{proof}
We just note that the proof of \cite[Lemma 9.25]{bhatt_globally} provided both morphisms being birational as claimed.
\end{proof}

We use this to show that if $B$ is big and $K_X+B$ is nef, then it always pulls back from the base of its nef reduction map.  This is a first approximation to semi-ampleness.

\begin{proposition}\label{prop:pullback_from_nef_red}
Let $(X,B)$ be a $\mathbb{Q}$-factorial klt pair of dimension $3$ over an uncountable field $K=H^0(X,\sO_X)$.  Suppose that $B$ is a big $\mathbb{Q}$-divisor and $K_X+B$ is nef.  Then there exists a separable extension $L/K$, a projective birational contraction $\phi:W\to X_L$ and projective contraction $h:W\to Z$ with $W$ and $Z$ normal, such that
\begin{enumerate}
\item $\dim(Z)=n(K_X+B)$
\item There is a nef divisor $D$ on $Z$ such that  \[\phi^*(K_X+B)\sim_{\mathbb{Q}}h^*D.\]
\end{enumerate}
\end{proposition}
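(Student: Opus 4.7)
The plan is to construct the required map by applying Kawamata's lemma (\autoref{lem:kawamata_lemma}) to a resolution of the nef reduction map of $K_X+B$. The hypothesis of the lemma requires semi-ampleness of $\phi_0^{\ast}(K_X+B)$ on the generic fibre, and verifying this will be the main obstacle.

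First, I would apply \autoref{prop:nef_red_exists} to $K_X+B$ to obtain a separable extension $L/K$ and a rational map $f\colon X_L\dashrightarrow Z'$ of relative dimension $3-n$, where $n:=n(K_X+B)$, such that $K_X+B$ is numerically trivial on very general fibres of $f$. After replacing $K$ by $L$ and using \autoref{thm:resolution} to resolve the indeterminacy of $f$, I would get a projective birational morphism $\phi_0\colon W_0\to X_L$ and a morphism $h_0\colon W_0\to Z'$, which may be taken to be a contraction after Stein factorisation. The restriction $\phi_0^{\ast}(K_X+B)|_F$ to the generic fibre $F$ is nef and numerically trivial, by propagating numerical triviality from very general fibres to the generic fibre (using properness of $h_0$ over the open set of $Z'$ provided by \autoref{prop:nef_red_exists}).

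The main obstacle is to show that $\phi_0^{\ast}(K_X+B)|_F$ is semi-ample, which I would handle case-by-case on $n$. When $n=3$, \autoref{prop:max_nef_dim} gives that $K_X+B$ is big; a Kodaira-type decomposition $B\sim_{\mathbb Q}\epsilon A+B'$ with $A$ ample and $(X,B')$ klt (choosing a general representative so the sub-pair stays klt) makes $(K_X+B)-(K_X+B')\sim_{\mathbb Q}\epsilon A$ nef and big, so \autoref{thm:big_bpf} immediately yields semi-ampleness of $K_X+B$, and the conclusion follows taking $Z$ to be the target of the Iitaka fibration. The same perturbation handles $n=0$: the hypotheses of \autoref{thm:big_bpf} are satisfied and force $K_X+B\sim_{\mathbb Q}0$ since $K_X+B\equiv 0$. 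For the intermediate cases $n\in\{1,2\}$, I would take $\phi_0$ to be a $\mathbb Q$-factorial dlt modification of $(X,B)$, so that $K_{W_0}+B_{W_0}=\phi_0^{\ast}(K_X+B)$ is dlt with $B_{W_0}$ effective, and apply adjunction to the generic fibre to produce a dlt pair $(F,\Gamma_F)$ with $K_F+\Gamma_F$ nef and numerically trivial, where $\Gamma_F\neq 0$ because the ample part of $B$ restricts positively to $F$. For $\dim F=2$ semi-ampleness then follows from abundance for log canonical surfaces over imperfect fields \cite{tanaka_abundance_2015}; for $\dim F=1$, the positivity of $\deg\Gamma_F$ forces $F$ to have arithmetic genus zero, and on such a curve any degree-zero divisor is $\mathbb Q$-linearly trivial.

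With semi-ampleness on the generic fibre secured, \autoref{lem:kawamata_lemma} applied to $h_0\colon W_0\to Z'$ and $L=\phi_0^{\ast}(K_X+B)$ produces projective birational morphisms $W\to W_0$ and $Z\to Z'$, a morphism $h\colon W\to Z$, and a $\mathbb Q$-Cartier $\mathbb Q$-divisor $D$ on $Z$ with $\phi^{\ast}(K_X+B)\sim_{\mathbb Q}h^{\ast}D$, where $\phi\colon W\to X_L$ is the composition $\phi_0\circ(W\to W_0)$. Nefness of $D$ follows by lifting curves from $Z$ to $W$ using surjectivity of $h$ and applying the projection formula to the nef divisor $h^{\ast}D$.
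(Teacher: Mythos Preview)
Your approach matches the paper's for the intermediate cases $n\in\{1,2\}$ (both reduce to abundance on the generic fibre and then invoke \autoref{lem:kawamata_lemma}), and for $n=3$ your argument works, though the paper simply takes $Z=X$ and $D=K_X+B$ since the proposition only asks for a nef $D$. One minor structural wrinkle: in the $n\in\{1,2\}$ case you say you take $\phi_0$ to be a $\mathbb{Q}$-factorial dlt modification of $(X,B)$, but you already fixed $\phi_0$ as a resolution of indeterminacy of $f$; these are different maps. The paper avoids this by observing that $f$ is already proper over the generic point of $Z$, so the generic fibre of $W\to Z$ coincides with the generic fibre of $f$ and inherits the klt pair $(F,B_F)$ directly from $(X,B)$ by adjunction.

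The genuine gap is the case $n=0$. You claim that the hypotheses of \autoref{thm:big_bpf} are satisfied with $L=K_X+B$, but that theorem requires $L$ itself to be nef \emph{and big}; when $K_X+B\equiv 0$ it is certainly not big, so the theorem does not apply. Passing from $K_X+B\equiv 0$ to $K_X+B\sim_{\mathbb{Q}}0$ over an imperfect field is the substantive content of this case, and the paper devotes most of the proof to it. The paper's argument proceeds by reducing (via \cite[Lemma~2.11]{cascini_relative_2017}) to showing that the pullback to the normalisation $Y$ of $X\otimes_K\overline{K}$ is $\mathbb{Q}$-linearly trivial, and for this it shows that $\mathrm{Alb}(Y)$ is trivial. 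The key point is that since $B$ is big, a suitable separable base change $V$ of $X$ is of log Fano type; the strengthened cone theorem (\autoref{thm:cone}) then guarantees that every extremal ray on $V$ contains the image of a rational curve on $Y$, and one of these rays must be horizontal for any nontrivial map $Y\to\mathrm{Alb}(Y)$, producing a rational curve on an abelian variety --- a contradiction. You will need to supply an argument of this kind to handle $n=0$.
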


\begin{proof}
Let $f:X_L\dashrightarrow Z$ be the nef reduction map of $K_X+B$, where $L$ is a finite separable extension of $K$.  By replacing $X$ by $X_L$ and performing a small $\mathbb{Q}$-factorialisation \cite[Corollary 9.21]{bhatt_globally}, we may assume that $L=K$ from now on.
Let $W$ be the normalisation of the graph of $f$, with morphisms  $f_W:W\to Z$ and $\phi:W\to X$.

 If $n(K_X+B)=3$ the theorem is trivial by taking $X=Z=W$.  

If $n(K_X+B)=1$ or $2$, let $F$ be the generic fibre of $f_W:W\to Z$, which since $f$ is proper over an open subset of $Z$, is equal to the generic fibre of $f$. We have 
\[0\equiv (K_X+B_X)|_F=K_F+B_F.\] 
$(F,B_F)$ is a klt pair of dimension $1$ or $2$, with $K_F+B_F$ numerically trivial.  Hence $K_F+B_F$ is semiample by \cite{tanaka_abundance_2015}, and we can conclude via \autoref{lem:kawamata_lemma}.

Finally we are left to deal with the case of $n(K_X+B)=0$, which is equivalent to $K_X+B\equiv 0$.  We are required to show that in fact $K_X+B\sim_{\mathbb{Q}}0$.   
Let $Y$ be the normalisation of $X\otimes_K \overline{K}$, with morphism $\phi:Y\to X$.  By \cite[Lemma 2.11]{cascini_relative_2017}, it is sufficient to show that $\phi^*(K_X+B)\sim_{\mathbb{Q}}0$.  For this we show that $\Alb(Y)$ is trivial.  Suppose for contradiction that $\Alb(Y)$ is positive dimensional, and let $f:Y\to T$ be the Stein factorisation of $Y\to\Alb(Y)$. Since $\Alb(Y)$ is non-trivial, $T$ must have positive dimension. If we can find a rational curve which is horizontal over $T$ then we obtain the desired contradiction since there would be a rational curve on $\Alb(Y)$.

First note that there is some finite extension $F/K$ such that if $Y_F$ is the normalisation of $X\otimes_k F$ then $Y=Y_F\otimes_F \overline{K}$, and similarly $f:Y\to T$ descends to $f_F:Y_F\to T_F$.  Let $E$ be the unique subfield of $F$ such that $F/E$ is purely inseparable and $E/K$ is separable.  Now let $V=X_E=X\otimes_K E$, and $\psi:V\to X$ be the natural morphism.  $V$ is a variety since we assume that $K=H^0(X,\sO_X)$.
We also have \[\psi^*(K_X+B)=K_V+B_V\equiv 0\] where $(V,B_V)$ is a klt pair.   
 However, $V$ may no longer be $\mathbb{Q}$-factorial.  If not, simply replace it with a small $\mathbb{Q}$-factorial modification.  Also, since $F/E$ is purely inseparable, \autoref{lem:descend_rational_map} gives a morphism $g:V\to S$ and purely inseparable morphism $T\to S$ fitting into a commutative diagram
 \[
\begin{tikzcd}
Y_F\ar[r]\ar[d,"f_F"] & V\ar[d,"g"]\\
T_F\ar[r] & S
\end{tikzcd}
\]

We claim that $V$ is of log Fano type.  Since $B_V$ is big, there is an effective divisor $E$ and ample $H\geq 0$ with $B_V\sim_{\mathbb{Q}} H+ E$. Now let $\epsilon$ be sufficiently small that $(V,(1-\epsilon)B_V+\epsilon E)$ is klt, and we have $K_V+(1-\epsilon) B_V+\epsilon E\sim_{\mathbb{Q}} -\epsilon H$, which shows that $V$ is of log Fano type.  Hence by the cone theorem \autoref{thm:cone} all of the extremal rays in $\overline{NE}(X)$ are discrete.  Furthermore, \autoref{thm:cone} guaranteed that every extremal ray contains the image of some rational curve on $Y$. 
Let $A$ be the pullback from an ample divisor on $S$.  Then there are some $A$-positive curves in $\overline{NE}(V)$ because $\dim(S)>0$, hence there must be some $A$-positive extremal ray $R$.  Since $V$ is log Fano type, $R$ contains the image of a rational curve from $Y$, which intersects $A$ positively and hence is not contracted over $S$.  Therefore the rational curve on $Y$ is not contracted over $T$, providing the required contradiction.

\end{proof}

In the next result, we show that $K_X+B$ is good, which is to say that the divisor $D$ on the image of its nef reduction map is big.  This will allow us to apply Keel's theorem to it in the next section.  We roughly follow the outline of \cite[Proposition 6.6]{birkar_existence_2017}.

\begin{theorem}\label{thm:it_is_good}
Let $(X,B)$ be a $\mathbb{Q}$-factorial klt pair of dimension $3$ over an uncountable field $K=H^0(X,\sO_X)$.  Assume either that $K$ is $F$-finite or that \autoref{thm:lmm_dw} holds in the non-$F$-finite case.
Suppose that $B$ is a big $\mathbb{Q}$-divisor and $K_X+B$ is nef.  Then there exists a separable extension $L/K$, a projective birational contraction $\phi:W\to X_L$ and projective contraction $h:W\to Z$ with $W$ and $Z$ normal, such that
\begin{enumerate}
\item $\dim(Z)=n(K_X+B)$
\item There is a big and nef divisor $D$ on $Z$ such that  \[\phi^*(K_X+B)\sim_{\mathbb{Q}}h^*D.\]
\end{enumerate}
Hence \[n(K_X+B)=\kappa(K_X+B)=\dim(Z).\]
\end{theorem}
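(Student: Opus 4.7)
The plan is to derive the theorem from \autoref{prop:pullback_from_nef_red}, which already furnishes $(L, \phi\colon W \to X_L, h\colon W \to Z)$ together with a nef $\mathbb{Q}$-divisor $D$ satisfying $\phi^*(K_X+B) \sim_{\mathbb{Q}} h^*D$ and $\dim Z = n(K_X+B)$. The only additional content to establish is that $D$ is big. Once that is done, the final equalities follow at once: $\kappa(K_X+B) = \kappa(\phi^*(K_X+B)) = \kappa(h^*D) = \kappa(D) = \dim Z$, combined with the inequality $\kappa \leq n$ noted after \autoref{prop:nef_red_exists}.

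I would argue by cases on $d := n(K_X+B) = \dim Z$. When $d = 0$, $Z$ is a point and the statement is vacuous. When $d = 1$, if $D \equiv 0$ then $K_X+B \equiv 0$, contradicting $d = 1$; so $D$ is a nonzero nef $\mathbb{Q}$-divisor on a curve, hence of positive degree and therefore big. When $d = 3$, both $\phi$ and $h$ are birational, so bigness of $D$ is equivalent to bigness of $K_X+B$; this follows from \autoref{prop:max_nef_dim} after using bigness of $B$ to write $B \sim_{\mathbb{Q}} A + E$ with $A$ ample and effective and $E$ effective.

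The substantive case is $d = 2$, where $Z$ is a surface and we must show $D^2 > 0$. Following the outline of \cite[Proposition 6.6]{birkar_existence_2017}, I would argue by contradiction: assume $D^2 = 0$, and combine a canonical-bundle-formula-type reduction along $h\colon W \to Z$ with the ACC results of \autoref{sec:acc} to reach a contradiction. Concretely, after replacing $W$ by a suitable $\mathbb{Q}$-factorial dlt modification (of $(X, B+F)$ for an auxiliary divisor $F$) one can arrange that $\lfloor B_W \rfloor$ contains a component $S$ surjective onto $Z$; the bigness of $B$ together with \autoref{lem:add_ample} provides the flexibility to do this while preserving the essential structure. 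Adjunction then yields $(K_W+B_W)|_S = K_S + B_S$ with $(S,B_S)$ dlt, $K_S + B_S$ numerically trivial over $Z$, and $B_S$ big relatively. \autoref{prop:adjunction_coefficients} forces the coefficients of $B_S$ into a DCC set, and \autoref{thm:global_acc} further restricts them to a finite set $\Gamma$. A numerical analysis of the intersection form on the surface $Z$ under the assumption $D^2 = 0$ then produces an infinite sequence of log canonical thresholds forbidden by \autoref{thm:acc_for_lct}, yielding the contradiction.

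The central difficulty is this $d = 2$ case, where the argument must weave together adjunction, dlt modification, and ACC over imperfect ground fields. In the algebraically closed setting each of these steps is standard, but here the preparatory results of the paper---the adjunction coefficient control of \autoref{prop:adjunction_coefficients}, the polytope results of \autoref{prop:polytopes}, the relative MMP of \autoref{thm:relative_mmp}, and the ACC theorems proved in \autoref{sec:acc}---are precisely what is needed to navigate the complications of inseparable base change and weaker Bertini-type statements.
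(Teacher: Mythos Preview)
Your treatment of the cases $d=0,1,3$ matches the paper's, but your sketch for $d=2$ diverges from the paper's argument and, as written, has a genuine gap.

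The paper (following \cite[Proposition 6.6]{birkar_existence_2017}) does \emph{not} proceed by adjunction to a horizontal reduced component. Instead it perturbs the boundary: writing $B\geq H+A$ with $H$ the pushforward of the pullback of an ample from $Z$ and $A$ ample on $X$, one shows $K_X+B-\epsilon H$ is not pseudo-effective, introduces the pseudo-effective threshold $\delta=\delta(\epsilon)$ for $K_X+B-\epsilon H+\delta A$, passes to a log minimal model $Y$, and studies the nef reduction map $Y\dashrightarrow V$ of $K_Y+B_Y-\epsilon H_Y+\delta A_Y$. On the generic fibre over $V$ one examines the threshold $\tau=\max\{t: K_{Y_\eta}+B_{Y_\eta}-tH_{Y_\eta}\text{ psef}\}$. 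It is the family of problems obtained by letting $\epsilon\to 0$ that feeds ACC: \autoref{thm:acc_for_lct} forces kltness along the way and \autoref{thm:global_acc} forces $\tau=0$. A further MMP over an open of $V$ then yields a fibration on which $\epsilon H-\delta A$ becomes relatively trivial, and a very general fibre curve pulled back to $X$ gives the contradiction (it is $(K_X+B)$-trivial, hence contracted by $f$, so $H$-trivial, yet $A$-positive). The polytope result \autoref{prop:polytopes} and \autoref{prop:same_nef_red_map} are used to control how these MMPs interact with the original nef reduction map.

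Your proposed route has two problems. First, you never specify the auxiliary divisor $F$ nor explain why a dlt modification of $(X,B+F)$ should produce a component $S\subset\lfloor B_W\rfloor$ \emph{surjective onto $Z$}; there is no a priori horizontal lc centre, and the bigness of $B$ alone does not manufacture one. Second, and more seriously, the sentence ``a numerical analysis of the intersection form on $Z$ under $D^2=0$ produces an infinite sequence of log canonical thresholds'' is the entire content of the argument and is left completely unexplained: you have a single fixed pair and a single surface $Z$, so there is no varying parameter to generate any sequence to which ACC could apply. In the paper's proof the infinite sequence arises from the one-parameter family $\epsilon\to 0$; without an analogue of that, ACC has nothing to bite on.
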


\begin{proof}
First note that whenever we take a nef reduction map, we must additionally perform a separable field extension.  All of the hypotheses are preserved by this process except for the $\mathbb{Q}$-factoriality.  This can be recovered by performing a small $\mathbb{Q}$-factorial modification, which is again harmless, so we will do this without comment where necessary.

If $K_X+B$ is big, the theorem follows from \autoref{prop:max_nef_dim}.  Similarly if $n(K_X+B)=0$ then we already saw that $K_X+B\sim_{\mathbb{Q}}0$ in \autoref{prop:pullback_from_nef_red}.  Now if $n(K_X+B)=1$, then the nef reduction map is a proper morphism since it is a rational map from a normal variety to regular curve which is regular over the generic point.  Therefore $D$ is a numerically non-trivial nef divisor on a curve so is ample and we are done.  Therefore we may assume from now on that $n(K_X+B)=2$, and for contradiction we assume that $\kappa(K_X+B)\leq 1$.

After replacing $(X,B)$ by its base change by a separable field extension and performing a small $\mathbb{Q}$-factorial modification, we may assume that the nef reduction map is $f:X\dashrightarrow Z$ where $\dim(Z)=2$.  Recall that this is regular and proper over the generic point of $Z$, and by \autoref{prop:pullback_from_nef_red} we have morphisms 
\[
\begin{tikzcd}[column sep=tiny, row sep=small]
& W\ar[dl,"\phi",swap]\ar[dr,"h"]&\\
X\ar[rr,"f",dashrightarrow]& & Z
\end{tikzcd}
\]
where \[\phi^*(K_X+B)\sim_{\mathbb{Q}}h^*D\]
and we assume for contradiction that $D$ is not big.

We claim that by replacing $X$, we may assume that every possible sequence of MMP steps which are $K_X+B$-trivial consists only of flips.  Suppose otherwise, so there exists a sequence of flips 
$$\begin{tikzcd}[column sep=tiny, row sep=small]
	X_i\ar[rr,dashed,]\ar[dr,"g_i"] && X_{i+1}\ar[dl, "g_i^+"]\\
	&Z_i&
\end{tikzcd}
$$ 
for some klt pairs $K_{X_i}+\Delta_i$ on $X_i$, each of which is $(K_X+B)$-trivial and lead to a $(K_{X_n}+B_{X_n})$-trivial divisorial contraction $g_n: X_n\to X'$.  By the base-point free theorem, there is some divisor $D_i$ such that $K_{X_i}+B_{X_i}\sim_{\mathbb{Q}}g_i^*D_i$, and so each of the steps is crepant for $K_{X_i}+B_{X_i}$.  Now 
 \autoref{prop:same_nef_red_map} ensures that neither the flips nor the divisorial contraction can be horizontal over $Z$, and so the composition $X'\dashrightarrow X\dashrightarrow Z$ is still the nef reduction map for $K_{X'}+B_{X'}$.
Since $X\dashrightarrow X'$ was crepant for $K_X+B$, we may replace $X$ with $X'$ and restart.  
 Since each such replacement reduces the Picard rank, this process eventually stops, so we may assume that every sequence of $(K_X+B)$-trivial MMP steps consists only of flips.

Let $H_Z$ be an ample divisor on $Z$, and let $H=\phi_*h^*H_Z$.  Since $B$ is big, $B-\gamma H$ is also big for $\gamma$ sufficiently small, and so we can write $B-\gamma H\sim_{\mathbb{Q}} A+E$ for some ample $A\geq 0$ and $E\geq 0$.  Since $(X,B)$ is klt, so is $(X,B+\gamma^2H+\gamma(A+E))$ for $\gamma$ sufficiently small.  
Note that we have \[K_X+B\sim_{\mathbb{Q}}K_X+(1-\gamma) B+\gamma^2H+\gamma (A+E)\] and 
hence we can replace $(X,B)$ by $(X,(1-\gamma)B+\gamma^2H+\gamma(A+E))$, and replace $A$ by $\gamma A$ and $H$ by $\gamma^2 H$ in order to assume that  $B\geq H+A$.

Note that $\phi^*H-h^*H_Z$ is $\phi$-exceptional and $\phi$-antinef and hence is effective by the negativity lemma.  Therefore for any $\epsilon>0$, $K_X+B-\epsilon H$ is not pseudo-effective, since 
\[\phi^*(K_X+B-\epsilon H) \leq \phi^*(K_X+B)-\epsilon h^*H_Z= h^*(D-\epsilon H_Z)\] and $D-\epsilon H_Z$ is not pseudo-effective.

For any fixed and sufficiently small $\epsilon>0$ to be determined, let \[\delta=\min\{t\mid K_X+B-\epsilon H+tA \mathrm{\ is\ }\psef\}\] 
Since $K_X+B-\epsilon A$ is not pseudo-effective, it follows that $\delta>0$ regardless of $\epsilon$.  But we also see that $\delta$ can be made arbitrarily small by shrinking $\epsilon$.  In particular we can ensure that $\epsilon$ is sufficiently small that $(X,B-\epsilon H+\delta A)$ is klt. Note that this is indeed a boundary by our replacement of $B$ earlier.  Furthermore, $K_X+B-\epsilon H+\delta A$ is not big since $\delta$ is the pseudo-effective threshold.

From now on ensure that $\epsilon$ is sufficiently small that \autoref{prop:polytopes}\autoref{itm:polytope_flips} will apply to any sequence of $(K_{X}+B_{X})$-trivial $(K_{X}+B_{}-\epsilon H+\delta A)$-flips.  By \autoref{thm:lmm_dw}, there is a log minimal model $Y$ for $(X,B_{}-\epsilon H_{}+\delta A_{})$.  
 Furthermore, since $(X_{},B_{}-\epsilon H_{}+\delta A_{X})$ is klt, $Y\dashrightarrow X$ contracts no divisors and so $X\dashrightarrow Y$ is an isomorphism away from some closed subset of $Y$ of codimension at least $2$.
We want to take the nef reduction map $g:Y\dashrightarrow V$, but first, we we claim that there is not a sequence $\epsilon_i\to 0$ such that  each corresponding $Y_i$ satisfies $K_{Y_i}+B_{Y_i}-\epsilon_i H_{Y_i}+\delta A_{Y_i}\equiv 0$.

Suppose that we can fix a small $\epsilon$ such that 
\[K_Y+B_Y-\epsilon H_Y+\delta A_Y\equiv 0.\]  
If this holds, we first prove that $X\dashrightarrow Y$ is an isomorphism in codimension $1$.  Suppose otherwise, that it contracts divisors.  Then let $\alpha:U\to Y$ and $\beta: U\to X$ be a common resolution. 
$\alpha^*(K_Y+B_Y-\epsilon H_Y+\delta A_Y)$ is a numerically trivial divisor on $U$, and since $K_X+B$ is klt, if we run a $K_U+B_U+E$-MMP over $X$, where $B_U$ is the strict transform of $B$ and $E$ is the reduced exceptional divisor, this MMP terminates on $X$ by the negativity lemma.  Since $\alpha^*(K_Y+B_Y-\epsilon H_Y+\delta A_Y)$ is numerically trivial, the big and nef base point free theorem \cite[Theorem 1.4]{das_waldron_imperfect} implies that it descends via pull backs through every step of the MMP $U\to X$.  
In other words, $\beta_*\alpha^*(K_Y+B_Y-\epsilon H_Y+\delta A_Y)$ is a numerically trivial $\mathbb{Q}$-Cartier divisor on $X$. 
Now we have that $$(K_X+B-\epsilon H+\delta A)-\beta_*\alpha^*(K_Y+B_Y-\epsilon H_Y+\delta A_Y)=E$$ is effective by pulling this back to $U$ and applying the definition of log minimal model, and is supported on the exceptional locus of $X\dashrightarrow Y$. 
Since $B$ is big we may assume that $B\geq \lambda \Supp(E)$ for some sufficiently small $\lambda\geq 0$, just as we did with $H$ earlier.  
Now run a $(K_X+B-\epsilon H +\delta A)$-MMP.  This MMP eventually ceases contracting curves contained in $\Supp(E)$ by  \cite[Theorem 2.15]{hacon_four}.  But since each contraction is negative for $E$, this means that the MMP must terminate in a log minimal model, which is isomorphic to $Y$ in codimension one.

  By choice of $\epsilon$, the first divisorial contraction must have been $(K_X+B)$-trivial by \autoref{prop:polytopes}\autoref{itm:polytope_flips}, which is impossible since we have already replaced $X$ to rule out this possibility.  Therefore there are no divisorial contractions in this MMP, and so $X$ and $Y$ are isomorphic in codimension $1$.  But this then means that $E=0$ and so $K_X+B-\epsilon H+\delta A\equiv 0$ since it is crepant with $K_Y+B_Y-\epsilon H_Y+\delta A_Y$.  
  We assumed that we could choose a sequence $\epsilon_i\to 0$ so that for each $i$ we obtained  $K_{Y_i}+B_{Y_i}-\epsilon_i H_{Y_i}+\delta_i A_{Y_i}\equiv 0$.  But now we see that we also have $K_X+B-\epsilon_i H+\delta_i A\equiv 0$ for each $i$.
But now taking this limit as $\epsilon_i\to 0$ in this equation, we obtain $K_X+B\equiv 0$, which is impossible since it has nef dimension $2$.  Hence we can assume that $n(K_Y+B_Y-\epsilon H_Y+\delta A_Y)>0$ for all $\epsilon$ sufficiently small.

After a suitable separable base change and small $\mathbb{Q}$-factorialisations of $X$ and $Y$, consider the nef reduction map $g:Y\dashrightarrow V$ of $(Y,B_Y-\epsilon H_Y+\delta A_Y)$.  Since $K_X+B-\epsilon H+\delta A$ was not big we must have $\dim(V)\leq 2$, and we have shown that we may assume that $\dim(V)>0$.

Let $\eta$ be the generic point of $V$ and let 
\[\tau=\max\{t\mid K_{Y_\eta}+B_{Y_\eta}-t H_{Y_\eta} \mathrm{\ is\ }\psef\}.\]
We have $\tau\geq 0$ since $K_X+B$ is pseudo-effective, but also $\tau<\epsilon$ since $$K_{Y_{\eta}}+B_{Y_\eta}-\epsilon H_{Y_\eta}\equiv -\delta A_{Y_{\eta}}$$ and  
$A_Y$ is big since $A$ was ample on $X$.

We aim to show that $\tau=0$, so suppose for contradiction that $\tau>0$.  
We first claim that $(Y,B_Y-\tau H_Y)$ is klt.  By construction of $Y$ we have that $(Y, B_Y-\epsilon H_Y+\delta A_Y)$ is klt, and hence so is $(Y, B_Y-\epsilon H_Y)$.  Recall that $B_Y\geq H_Y+A_Y$, so we may write $B_Y=\Delta+H_Y$.  The coefficients of $\Delta$ and $H_Y$ depend only on those of the original $B$, $A$ and $H$ and not on choices of $\epsilon$, $\delta$ and $Y$.  Therefore by ACC for log canonical thresholds \autoref{thm:acc_for_lct} we can choose $\epsilon$ sufficiently small such that whenever $(Y, \Delta+(1-\epsilon)H_Y)$ is log canonical, then so is $(Y,\Delta+H_Y)$.  But this shows that $(Y, B_Y)$ is log canonical, and hence $(Y, B_Y-\tau H_Y)$ is klt since it lies between the klt $(Y, B_Y-\epsilon H_Y)$ and the log canonical $(Y, B_Y)$.

There is some non-empty open subset $U$ of $V$ over which the nef reduction map is proper.  We may therefore run a $(K_Y+B_Y-\tau H_Y)|_{g^{-1}(U)}$-MMP with scaling of some ample divisor, which terminates terminates with a log minimal model $(Y', B_{Y'}-\tau H_{Y'})$ by \cite[Theorem 9.35]{bhatt_globally}, with morphism $g':Y'\to U$.  But $K_{Y'}+B_{Y'}-\tau H_{Y'}$ is semi-ample over $U$ after possibly shrinking $U$ (by abundance for curves or surfaces \cite{tanaka_abundance_2015} applied to the generic fiber), and not relatively big, so comes with a fibration $h:Y'\to W$ over $U$ by \cite[Theorem 9.33]{bhatt_globally}.  This shows that $\tau=0$ by applying \autoref{thm:global_acc} on the generic fibre with the DCC set being formed by
 taking increasing values of $1-\tau$ created by taking $\epsilon\to 0$.

Given that $\tau=0$ we see that $K_Y+B_Y$ is pseudo-effective over $V$, but not big over $V$.  Furthermore by the application of \autoref{thm:acc_for_lct} above, we see that $(Y, B_Y)$ is log canonical.  We claim that we can still run a terminating $(K_Y+B_Y)|_{g^{-1}(U)}$-MMP over $U$ as above.  Since $g$ is the nef reduction map for $K_Y+B_Y-\epsilon H_Y+\delta A_Y$, which is a klt pair with big boundary, we have \[(K_Y+B_Y-\epsilon H_Y+\delta A_Y)|_{g^{-1}(U)}\sim_{\mathbb{Q},U}0\] by the relative base point free theorem \cite[Theorem 9.33]{bhatt_globally}. 
So for $\lambda\in(0,1)$ we have 
\[(K_Y+B_Y+\lambda(-\epsilon H_Y+\delta A_Y))|_{g^{-1}(U)}\sim_{\mathbb{Q},U}(1-\lambda)(K_Y+B_Y).\]
But since $(Y,B_Y)$ is log canonical and $(Y, B_Y-\epsilon H_Y+\delta A_Y)$ is klt we see that $(Y, B_Y+\lambda(-\epsilon H_Y+\delta A_Y))$ is actually klt. Therefore we can run a $((K_Y+B_Y+\lambda(-\epsilon H_Y+\delta A_Y)))|_{g^{-1}(U)}$-MMP with scaling of some ample divisor which terminates, using  \cite{bhatt_globally}, and note that this was also an MMP for $(K_Y+B_Y)|_{g^{-1}(U)}$ since the two divisors are $\mathbb{Q}$-linearly equivalent over $U$.
Let $(Y',B_{Y'})$ be the resulting log minimal model over $U$.  Since we still have \[K_{Y'}+B_{Y'}+\lambda(-\epsilon H_{Y'}+\delta A_{Y'})\sim_{\mathbb{Q},U}(1-\lambda)(K_{Y'}+B_{Y'})\]
and the former pair is klt, we see that $K_{Y'}+B_{Y'}$ is semi-ample over $U$ by \cite[Theorem 9.33]{bhatt_globally}.  Let $h:Y'\to W$ be the resulting contraction.  Note that since $K_Y+B_Y$ was not big over $V$, we have $\dim(W)<\dim(Y)$.
Since  $(K_Y+B_Y-\epsilon H_Y+\delta A_Y)|_{g^{-1}(U)}$ is relatively trivial over $U$ by \cite[Theorem 9.33]{bhatt_globally}, this property descends through the LMMP and so \[K_{Y'}+B_{Y'}-\epsilon H_{Y'}+\delta A_{Y'}\sim_{\mathbb{Q},W}0.\]  This then implies that \[\delta A_{Y'}-\epsilon H_{Y'}\sim_{\mathbb{Q},W} 0\] since $Y'\to W$ is the semi-ample fibration for $K_{Y'}+B_{Y'}$.

Note that $X\dashrightarrow Y$ is an isomorphism away from a closed subset of $Y$ of codimension at most $2$, and 
$Y|_{g^{-1}(U)}\dashrightarrow Y'$ is an isomorphism away from the union of a closed subset of $Y'$ of codimension at least $2$.  This means that since $\dim(X)=3$, $Y|_{g^{-1}(U)}\dashrightarrow Y'$ is an isomorphism over a general curve in the fibers of $h:Y'\to U$.  Any such curve is proper already in $Y|_{g^{-1}(U)}$ since $g$ is proper over $U$, and so $X\dashrightarrow Y$ is an isomorphism over a general such curve as well.  Therefore
the composition $\psi:X\dashrightarrow Y \dashrightarrow Y'$ is an isomorphism over a general curve in the fibers of $h:Y'\to W$.  Choose such a curve $C$.  
 Let $\widetilde{C}=\psi^{-1}_*C$.  Then 
\[(K_X+B)\cdot\widetilde{C}=(K_{Y'}+B_{Y'})\cdot C=0\]
 since these divisors are the same near $C$.  By the generality of $C$, we can ensure that  $\widetilde{C}$ contains a very general point of $X$, and so is contracted by the nef reduction map $h:X\dashrightarrow Z$.    But since $C$ is contracted over $U$ we also have \[(\epsilon H-\delta A)\cdot \widetilde{C}=(\epsilon H_{Y'}-\delta A_{Y'})\cdot C=0.\] 
 But this is impossible, because $A$ is ample on $X$ and $H$ is the pullback of an ample divisor on $Z$, and so we would have $A\cdot\widetilde{C}=(\epsilon/\delta) H\cdot\widetilde{C}=0$, contradicting the ampleness of $A$.  
 This provides the contradiction to the initial assumption, so we conclude that the divisor $D$ was in fact big.

\end{proof}

\section{Base point freeness and Mori fibre spaces}\label{sec:bpf}

We now prove the main result, the base point free theorem for non-big divisors, and then deduce its consequences for the threefold LMMP.

\begin{lemma}\label{lem:bpf_Q}
Let $(X,B+A)$ be a $\mathbb{Q}$-factorial klt threefold pair over a field $K$ such that $A\geq0$ is an ample $\mathbb{Q}$ divisor and $B$ a $\mathbb{Q}$-boundary. Assume either that $K$ is $F$-finite or that \autoref{thm:lmm_dw} holds in the non-$F$-finite case.

If  $K_X+B+A$ is nef then it  is semi-ample.
\end{lemma}

\begin{proof}

First, we may replace $K$ to assume that $K=H^0(X,\sO_X)$.  Then we may perform an uncountable transcendental extension to assume that $K$ is uncountable, and finally replace $K$ by the finite separable extension $L$ appearing in \autoref{thm:it_is_good}.  Showing that the pullback of $K_X+B+A$ by this field extension is semi-ample is sufficient.  
All assumptions are preserved by these operations except perhaps for $\mathbb{Q}$-factoriality, which can be recovered by performing a small $\mathbb{Q}$-factorial modification followed by perturbing the pullback of $A$. 
In this way we may  assume that
 there are morphisms 
\[
\begin{tikzcd}
X'\ar[r,"\phi"]\ar[d,"h"] &X\\
Z & 
\end{tikzcd}
\]
where $\phi$ is a birational contraction and such that $Z$ carries a big divisor $D$ on $Z$ with 
\[\phi^*(K_X+B+A)\sim_{\mathbb{Q}}h^*D.\]

If $\dim(Z)=0$ we are done, and similarly if $\dim(Z)=1$ then $D$ is ample and we are done.  Also,
if $\dim(Z)=3$ then the result is \autoref{thm:big_bpf}, so we may assume that $\dim(Z)=2$.  

We will first show that 
$K_X+B+A$ is EWM, for which it is sufficient show that $D$ is EWM. By \cite{keel_basepoint_1999} it is enough to check that $D|_{\mathbb{E}(D)}$ is EWM, but $\mathbb{E}(D)$ consists of a union of curves on which $D$ is numerically trivial by the projection formula, so it is EWM to a point as required.  Let $f:X\to V$ be the map associated to $K_X+B+A$.
To check that $D$ is semi-ample, it is enough to check that $D|_{\mathbb{E}(D)}$ is semi-ample by \cite{keel_basepoint_1999}.  For this it is in turn enough to show that $(K_X+B+A)|_F$ is semi-ample for every fibre $F$ of $f$ by the  argument for the moreover of \cite[Lemma 9.26]{bhatt_globally}.
The proof of this given in \cite[Theorem 9.27]{bhatt_globally} also works here.  In particular the relevant argument starts in the third paragraph of the proof of \cite[Theorem 9.27]{bhatt_globally}. 

\end{proof}

\begin{proof}[Proof of \autoref{thm:bpf}]
First, by the same argument as \cite[Theorem 1.4]{das_waldron_imperfect} we can assume that the ground field is $F$-finite.  
Furthermore by taking a small $\mathbb{Q}$-factorialisation \cite[Corollary 9.21]{bhatt_globally} we may assume that $X$ is $\mathbb{Q}$-factorial.
Let $L-(K_X+B)\sim_{\mathbb{R}}A+\epsilon E$ for $A$ ample and $\epsilon$ sufficiently small.  Then by including a small perturbation of $A$ into $E$ we may assume that $A$ is an ample $\mathbb{Q}$-divisor, and since $\epsilon$ is sufficiently small, replacing $B$ by $B+\epsilon E$ results in a klt pair.  Now by \autoref{lem:add_ample} we may replace $A$ up to linear equivalence to assume that $A\geq 0$ and $(X,B+A)$ is a klt pair.  If $B$ is a $\mathbb{Q}$-divisor, we are done by \autoref{lem:bpf_Q}.  Otherwise, by \autoref{prop:polytopes}\autoref{itm:polytope_polytope} applied to the set of all extremal rays,  we may find $\mathbb{Q}$-boundaries $B_i$ and real numbers $b_i>0$ with \(\sum B_i=1\) such that $B=\sum_i b_iB_i$ for which $K_X+B_i+A$ is nef for all $i$.  Therefore $K_X+B_i+A$ is semi-ample for all $i$, and hence $K_X+B+A$ is semi-ample as required.
\end{proof}

\begin{theorem}[Contraction Theorem]\label{thm:contraction}
Let $(X,B)$ be a projective $\mathbb{Q}$-factorial dlt pair of dimension $3$, projective over a field $K$ of characteristic $p>5$. Then any $(K_X+B)$-negative extremal ray can be contracted by a projective morphism.
\end{theorem}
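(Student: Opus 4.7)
The plan is to produce a nef $\mathbb{Q}$-Cartier divisor $L$ supporting $R$ with $L-(K_X+B)$ ample, apply the Base Point Free Theorem \autoref{thm:bpf}, and take the contraction to be the Stein factorization of the morphism defined by a sufficiently divisible multiple of $L$. By design, an irreducible curve $C$ will be contracted if and only if $L\cdot C=0$, if and only if $[C]\in R$.

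First I would reduce to the klt case. Since $(X,B)$ is dlt with $B\geq 0$, the pair $(X,(1-\epsilon)B)$ is klt for every $\epsilon\in(0,1)$, and by \autoref{thm:cone}(3) the ray $R$ remains $(K_X+(1-\epsilon)B)$-negative for all sufficiently small $\epsilon>0$. After this replacement we may assume $(X,B)$ is klt.

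Next, I would construct $L$. Fix an ample $\mathbb{Q}$-divisor $A$ such that $K_X+B+A$ is nef, and enumerate via \autoref{thm:cone}(2) the finitely many $(K_X+B+\tfrac12 A)$-negative extremal rays $R=R_1,R_2,\ldots,R_k$. Varying $H$ in a finite-dimensional rational vector space of ample $\mathbb{Q}$-divisors and using the rational polyhedrality guaranteed by \autoref{prop:polytopes}\autoref{itm:polytope_polytope} applied to the finite family $\{R_1,\ldots,R_k\}$, I can find a rational ample $H$, with $H-\tfrac12 A$ still ample, such that $L:=K_X+B+H$ is nef, satisfies $L\cdot R_1=0$, and $L\cdot R_i>0$ for $i\geq 2$. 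Positivity of $L$ on any other extremal ray $R'$ is automatic: by the enumeration $(K_X+B)\cdot R'\geq -\tfrac12 A\cdot R'$, so $L\cdot R'\geq (H-\tfrac12 A)\cdot R'>0$ since $H-\tfrac12 A$ is ample.

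Since $L-(K_X+B)=H$ is ample, hence nef and big, \autoref{thm:bpf} yields that $L$ is semi-ample. The Stein factorization $\phi\colon X\to Y$ of the morphism defined by a sufficiently divisible multiple of $L$ is then a projective morphism with connected fibres contracting precisely the curves whose class lies in $R$, and so is the required contraction. The main technical point is the construction of $L$ with the prescribed vanishing on $R$ and positivity on all other extremal rays; given $L$, the rest is a direct application of \autoref{thm:bpf}.
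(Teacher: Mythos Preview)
Your proof is correct and follows the same route as the paper: reduce to klt, use the cone theorem to produce a nef $L=K_X+B+H$ with $H$ ample and $L^{\perp}\cap\overline{NE}(X)=R$, then apply the base point free theorem (the paper perturbs $B$ to a $\mathbb{Q}$-divisor and invokes \autoref{lem:bpf_Q} after making $(X,B+H)$ klt via \autoref{lem:add_ample}, whereas you invoke \autoref{thm:bpf} directly, which is equally valid). One small slip: requiring $K_X+B+A$ to be nef does not guarantee that $R$ is $(K_X+B+\tfrac12 A)$-negative, so $R$ need not appear among your $R_1,\dots,R_k$; since you never actually use nefness of $K_X+B+A$, you should instead simply choose $A$ small enough that $R$ remains $(K_X+B+\tfrac12 A)$-negative.
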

\begin{proof}
Since the ray $R$ is $(K_X+B)$-negative, by a small perturbation of $B$ we can assume that it is a $\mathbb{Q}$-divisor and $(X,B)$ is klt. Since $R$ is also $(K_X+B+A)$-negative for some sufficiently small ample divisor $A$, there are only finitely many $K_X+B+A$-negative rays by \autoref{thm:cone}, each of which contains a curve, we can find an ample $\mathbb{Q}$-divisor $H$ such that $(K_X+B+H)^\perp=R$. Now by replacing $H$ up to linear equivalence such that $(X,B+H)$ is klt we are done by \autoref{lem:bpf_Q}.
\end{proof}

\begin{theorem}[Finiteness of models]\label{thm:finiteness_of_models}
Let $X$ be a three dimensional klt $\mathbb{Q}$-factorial variety over a field $K$.  
Assume either that $K$ is $F$-finite or that \autoref{thm:lmm_dw} holds in the non-$F$-finite case.
  Let $A\geq 0$ be a big $\mathbb{Q}$-divisor and $V$ a finite dimensional rational affine subspace of the vector space of $\mathbb{R}$-Weil divisors on $X$.  Recall the Shokurov polytope \autoref{def:shokurov_polytope}:
\[\mathcal{L}_A(V)=\{\Delta\mid 0\leq (\Delta-A)\in V\mathrm{\ and\ }(X,\Delta)\mathrm{\ is\ log\ canonical}\}\]

Let $\mathcal{C}$ be a rational polytope inside $\mathcal{L}_A(V)$ such that $(X, B)$ is klt for every $B\in \mathcal{C}$.  Then there exist finitely many birational maps $\phi_i:X\dashrightarrow Y_i$ such that for each $B\in \mathcal{C}$ for which  $K_X+B$ is pseudo-effective, there is some $i$ such that $(Y_i, B_{Y_i})$ is a log minimal model of $(X,B)$.
\end{theorem}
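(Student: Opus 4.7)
The plan is to show: for every $B_0\in\mathcal{C}$ for which $K_X+B_0$ is pseudo-effective, there is an open neighborhood $U_{B_0}\subset\mathcal{C}$ and a single birational contraction $\phi_{B_0}:X\dashrightarrow Y_{B_0}$ such that $(Y_{B_0},(\phi_{B_0})_*B)$ is a log minimal model of $(X,B)$ for every pseudo-effective $B\in U_{B_0}$.  Since $\mathcal{E}=\{B\in\mathcal{C}\mid K_X+B\text{ is pseudo-effective}\}$ is closed inside the compact polytope $\mathcal{C}$, it is itself compact, so a finite subcollection of such neighborhoods covers it, producing the desired finite list of maps.

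Given $B_0\in\mathcal{E}$, the pair $(X,B_0)$ is $\mathbb{Q}$-factorial klt with $B_0$ big (as $B_0-A\geq 0$ and $A$ is big), so \autoref{thm:lmm_dw} produces a log minimal model $\phi_{B_0}:X\dashrightarrow Y_{B_0}$.  Then $Y_{B_0}$ is $\mathbb{Q}$-factorial klt, $\phi_{B_0}$ extracts no divisors, every $\phi_{B_0}$-exceptional prime divisor $E$ on $X$ satisfies the strict discrepancy inequality $a(E,X,B_0)<a(E,Y_{B_0},(\phi_{B_0})_*B_0)$, and $K_{Y_{B_0}}+(\phi_{B_0})_*B_0$ is nef.

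The main work is to check that the four conditions in the definition of a log minimal model persist under a small perturbation of $B_0$ inside $\mathcal{C}$.  The klt condition and the strict discrepancy inequality are strict linear conditions in the coefficients of $B$ and so survive on a sufficiently small $\delta$-ball around $B_0$; the no-divisor-extracted condition is independent of $B$.  For nefness of $K_{Y_{B_0}}+(\phi_{B_0})_*B$, choose a rational polytope $\mathcal{C}'$ of boundaries on $Y_{B_0}$ containing an open neighborhood of $(\phi_{B_0})_*B_0$ and on which $Y_{B_0}$ remains klt, and apply \autoref{prop:polytopes} on $Y_{B_0}$ to extract a uniform $\delta>0$ with the following property: every extremal ray $R$ of $\overline{NE}(Y_{B_0})$ for which $(K_{Y_{B_0}}+(\phi_{B_0})_*B)\cdot R\leq 0$ for some $B$ in the $\delta$-ball around $B_0$ must also satisfy $(K_{Y_{B_0}}+(\phi_{B_0})_*B_0)\cdot R\leq 0$.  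Combined with the existing nefness at $B_0$, this forces $K_{Y_{B_0}}+(\phi_{B_0})_*B$ to be nef for all $B$ in the $\delta$-ball.  Setting $U_{B_0}$ to be this ball intersected with $\mathcal{C}$ completes the construction of the neighborhood.

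The main obstacle is ensuring that the $\delta$ produced by \autoref{prop:polytopes} is genuinely uniform over all extremal rays of $Y_{B_0}$ (so that a single $\delta$-ball handles every ray simultaneously), and that the Shokurov polytope structure transfers cleanly from $X$ to $Y_{B_0}$ under pushforward by the birational contraction $\phi_{B_0}$; both are provided by \autoref{prop:polytopes} and the fact that a log minimal model of a klt pair is again $\mathbb{Q}$-factorial klt.  Once these neighborhoods are established, compactness of $\mathcal{E}$ immediately yields the required finite collection $\{\phi_i:X\dashrightarrow Y_i\}$.
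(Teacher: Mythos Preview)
Your compactness approach has a genuine gap in the nefness step.  From \autoref{prop:polytopes}(2) you correctly deduce that if $(K_{Y_{B_0}}+(\phi_{B_0})_*B)\cdot R\leq 0$ for some extremal ray $R$ and some $B$ within $\delta$ of $B_0$, then $(K_{Y_{B_0}}+(\phi_{B_0})_*B_0)\cdot R\leq 0$, hence $=0$ by nefness at $B_0$.  But this does \emph{not} force $(K_{Y_{B_0}}+(\phi_{B_0})_*B)\cdot R\geq 0$: a $(K_{Y_{B_0}}+(\phi_{B_0})_*B_0)$-trivial extremal ray can, and typically will, become strictly negative under a small perturbation of the boundary.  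Such rays exist whenever $K_{Y_{B_0}}+(\phi_{B_0})_*B_0$ is nef but not ample, which is the generic situation here since $K_X+B_0$ need not be big.  Consequently a single model $Y_{B_0}$ does not serve for an entire open neighbourhood of $B_0$, and the covering argument collapses.

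The paper instead follows Birkar's argument from \cite{birkar_existence_2017}, which proceeds by induction on $\dim\mathcal{C}$ rather than by compactness.  After producing a log minimal model $Y$ for one $B_0$ via \autoref{thm:lmm_dw}, one invokes \autoref{prop:polytopes}\autoref{itm:polytope_polytope} to see that the locus $\mathcal{N}\subset\mathcal{C}$ on which $K_Y+\Delta_Y$ is nef is a \emph{rational polytope}; $Y$ is a log minimal model for every $B\in\mathcal{N}$.  For $B\in\mathcal{E}\setminus\mathcal{N}$ one moves to the boundary of $\mathcal{N}$, uses \autoref{thm:bpf} to contract the rays that have become trivial there, and continues the MMP from the resulting model; since the faces of $\mathcal{N}$ have strictly smaller dimension, the inductive hypothesis finishes the argument.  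The essential point your sketch misses is that $\mathcal{N}$ is not in general a neighbourhood of $B_0$, so one must exploit its rational polytope structure together with semi-ampleness on its boundary, rather than openness.
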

\begin{proof}
The proof is identical to that of \cite[Proposition 4.2]{birkar_existence_2017} using \autoref{prop:polytopes}, \autoref{thm:bpf} and \autoref{thm:lmm_dw}.
\end{proof}

\begin{theorem}[Termination with scaling]\label{thm:termination}
Let $(X,B)$ be a three dimensional $\mathbb{Q}$-factorial klt pair, projective over a field $K$ of characteristic $p>5$.  
Assume either that $K$ is $F$-finite or that \autoref{thm:lmm_dw} holds in the non-$F$-finite case.
Suppose also that $B$ is a big, and $C\geq 0$ is such that $K_X+B+C$ is nef.   Then we can run the LMMP for $K_X+B$ with scaling of $C$ and it terminates.  
\end{theorem}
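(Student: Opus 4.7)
The plan is to first ensure that each step of the MMP with scaling can be executed, and then to deduce termination by applying the finiteness of models result \autoref{thm:finiteness_of_models}. Passing to a small $\mathbb{Q}$-factorialization via \cite[Corollary 2.91]{bhatt_globally}, I may assume $X$ is $\mathbb{Q}$-factorial. The cone theorem \autoref{thm:cone}, the contraction theorem \autoref{thm:contraction}, and the existence of $(K_X+B)$-flips with $B$ big established in \cite[Theorem 1.5]{das_waldron_imperfect} together provide each step of a $(K_X+B)$-MMP with scaling of $C$; divisorial contractions can occur only finitely often since they strictly reduce the Picard number, so after truncating I may assume the MMP consists of an infinite sequence of flips $X = X_0 \dashrightarrow X_1 \dashrightarrow \cdots$ with non-increasing scaling thresholds $\lambda_i := \inf\{t \geq 0 : K_{X_i}+B_i+tC_i \text{ is nef}\} \in [0,1]$.

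Since $B$ is big, write $B \sim_{\mathbb{Q}} B'+A$ with $A$ a small ample $\mathbb{Q}$-divisor, $B' \geq 0$ and $(X, B')$ klt. Let $V$ be the finite dimensional rational affine subspace of $\mathbb{R}$-Weil divisors spanned by $B'$ and $C$, and let $\mathcal{C} \subset \mathcal{L}_A(V)$ be a rational polytope containing every boundary of the form $B + tC$ with $t \in [0,\lambda_0]$ and on which every pair is klt. By \autoref{thm:finiteness_of_models}, there exist finitely many birational maps $\psi_k \colon X \dashrightarrow Y_k$ such that every pair in $\mathcal{C}$ with pseudo-effective log canonical class admits one of the $Y_k$ as a log minimal model. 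A standard application of the negativity lemma then shows that for each $i$, the pair $(X_i, B_i+\lambda_i C_i)$ together with the induced birational contraction $X \dashrightarrow X_i$ is a log minimal model of $(X, B+\lambda_i C)$: every step of the MMP prior to $X_i$ is trivial against $K+B+\lambda_j C$ at its own scaling value $\lambda_j \geq \lambda_i$, hence non-positive against $K+B+\lambda_i C$, while $K_{X_i}+B_i+\lambda_i C_i$ is nef by definition.

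Each $X_i$ must therefore coincide with some $Y_{k(i)}$ under the identification coming from the MMP. If the sequence of flips were infinite, pigeonhole on the finite set $\{Y_1,\ldots,Y_N\}$ would produce indices $i_1 < i_2$ for which $X_{i_1}$ and $X_{i_2}$ give the same log minimal model; the composition $X_{i_1} \dashrightarrow X_{i_2}$ would then be a small birational self-map of a log minimal model of a klt pair and hence an isomorphism, forcing each intermediate flip to be trivial, a contradiction. The main obstacle will be the verification in the second paragraph that $(X_i, B_i + \lambda_i C_i)$ really is a log minimal model of $(X, B + \lambda_i C)$ in a form precise enough to invoke \autoref{thm:finiteness_of_models}, since log minimal models of klt pairs are only unique up to small birational maps; this is the technical heart of the termination-with-scaling argument and should follow the pattern of \cite[Section 4]{birkar_existence_2017}, with the cone theorem \autoref{thm:cone} and the base point free theorem \autoref{thm:bpf} substituting for the characteristic-zero inputs used there.
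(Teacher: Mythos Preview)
Your proposal is correct and follows the same approach as the paper, which simply records that the proof is identical to \cite[Proposition~4.3]{birkar_existence_2017} once one has \autoref{thm:cone}, \autoref{thm:contraction}, and \autoref{thm:finiteness_of_models} in hand. You have correctly identified both the inputs and the one genuine subtlety---matching each $X_i$ with one of the finitely many models $Y_k$ despite log minimal models of klt pairs being unique only up to small birational maps---and your deferral to Birkar's argument for this point is exactly what the paper does as well.
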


\begin{proof}[Proof of \autoref{thm:termination}]
The proof is the same as \cite[Proposition 4.3]{birkar_existence_2017}, using \autoref{thm:contraction}, \autoref{thm:cone} and \autoref{thm:finiteness_of_models}. 
\end{proof}

\begin{proof}[Proof of \autoref{thm:mfs} in the $F$-finite case]
Fix an ample divisor $A$ such that $K_X+B+A$ is nef.  We claim that we can run a $(K_X+B)$-MMP with scaling of $A$ which terminates.  	We split into cases based on whether $K_X+B$ is pseudo-effective or not.  If it is pseudo-effective, then the argument of \cite[Proposition 4.4]{birkar_existence_2017} applies by replacing \cite[Proposition 3.8]{birkar_existence_2017} with \autoref{prop:polytopes}.  Otherwise, if $K_X+B$ is not pseudo-effective there is $\epsilon>0$ such that $K_X+B+\epsilon A$ is not pseudo-effective.  If we run a $(K_X+B+\epsilon A)$-MMP with scaling of $A$ and if it terminates then the result is also a $(K_X+B)$-Mori fibre space.  This terminates by \autoref{thm:termination} since the boundary $B+\epsilon A$ is big.
\end{proof}

\section{Non-$F$-finite fields}\label{sec:non_F-finite}

In this section, we extend all of our results to non-$F$-finite ground fields.  
As noted in \cite[Remark 1.9]{das_waldron_imperfect}, the assumption appeared there due to a limitation on the existence of log resolutions.  In particular it was not known that there was a projective log resolution with ample exceptional divisor, however this is now known (\cite[Theorem 1]{kollar_witaszek} and \cite[Proposition 2.14]{bhatt_globally}) with the small caveat that the resulting resolution is not an isomorphism over the simple normal crossings locus.   This prevents adding an ample perturbation to the boundary of a dlt pair.

Firstly note that the $\dim(T)>0$ assumption does not appear in \cite[Section 9]{bhatt_globally} until \cite[Proposition 9.20]{bhatt_globally}.  Therefore the only main result of \cite{das_waldron_imperfect} which is not already recovered in the non-$F$-finite case is the existence of log minimal models, and the base point free theorem.  The latter can be deduced immediately from the $F$-finite case since semi-ampleness is preserved under base change, see \cite[Proof of Theorem 1.4]{das_waldron_imperfect}.  While this can probably be obtained as in \cite{das_waldron_imperfect} by appealing to the argument of \cite{birkar_existence_2016}, we give a quick reduction to the $F$-finite case instead.

\begin{proposition}\label{prop:non-f-finite-lmmp}
	Let $(X,B)$ be a $\mathbb{Q}$-factorial klt pair, projective over a field $K$ of characteristic $p>5$, such that $K_X+B$ is pseudo-effective.   Then there is a $(K_X+B)$-MMP which terminates.  In particular $(X,B)$ has a log minimal model.
	\end{proposition}
\begin{proof}
	As in \cite[Section 6.6]{das_waldron_imperfect}, we can find an $F$-finite subfield $L\subset K$ such that $(X,B)$ extends from $L$, which means that there is a $\mathbb{Q}$-factorial klt pair $(X_L,B_L)$ over $L$ such that $$(X,B)\cong (X_L,B_L)\otimes_L K.$$  
	
	By \autoref{thm:termination} we can run a terminating $(X_L, B_L)$-MMP with scaling of some ample divisor $A_L$:
	
\[
\begin{tikzcd}[column sep=tiny, row sep=small]
X_L:=X_1^L\ar[rr,dashrightarrow]\ar[dr] & & X_2^L\ar[dl]\ar[rr,dashrightarrow] & & \dots\ar[rr,dashrightarrow] & & X_{n-1}^L\ar[dr]\ar[rr,dashrightarrow]& & X_n^L\ar[dl]\\
& Z_1^L&& &  & &&Z_{n-1}^L & \\
\end{tikzcd}
\]

Here $X_i^L\to Z_i^L$  denotes either a flipping contraction or a divisiorial contraction, while $X_{i+1}^L\to Z_i^L$ denotes either the flip or the identity respectively.  
	For each $i$, let $X_i=X_i^L\otimes_L K$ and $Z_i=Z_i^L\otimes_L K$ and $A=A_L\otimes_L K$, where by definition we have $X=X_1$. 	
	
	Now run an $(X,B)$-MMP over $Z_1$ with scaling of $A$.  This terminates by \cite[Proposition 9.20]{bhatt_globally} with some model $\widetilde{X}_2$ since $\dim(Z_i)=3$.  
	There is a morphism $\phi:\widetilde{X}_2\to X_2$ since $K_{\widetilde{X}_2}+\widetilde{B}_2$ is nef over $Z_1$ and hence semi-ample over $Z_1$ by the base point free theorem \cite[Theorem 1.4]{das_waldron_imperfect}.  Hence $\phi$ exists by the uniqueness of canonical models.  Note that we still have $K_{\widetilde{X}_2}+\widetilde{B}_2=\phi^*(K_{X_2}+{B}_2)$. 
	Let $\lambda$ be the number which appeared in the LMMP with scaling, which is to say that $\lambda:=\min\{t\mid K_{X}+B+tA\mathrm{\ is\ nef}\}$.  Then by construction, $X\to Z\leftarrow X_2$ is crepant for $K_X+B+\lambda A$, and hence we have $K_{\widetilde{X}_2}+\widetilde{B}_2+\lambda \widetilde{A}=\phi^*(K_{X_2}+{B}_2+\lambda A)$ is nef as well.  It follows that $\widetilde{A}=\phi^*A$ and so the next nef threshold on $X_2$ is the same as the next nef threshold on $\widetilde{X}_2$.   

	Continuing this process produces:
	
\[
\begin{tikzcd}[column sep=tiny, row sep=small]
X=X_1\ar[rr,dashrightarrow]\ar[d] & & \widetilde{X}_2\ar[d]&\dots & & \widetilde{X}_{n-1}\ar[d]\ar[rr,dashrightarrow]& & \widetilde{X}_n\ar[d]\\
X=X_1\ar[rr,dashrightarrow]\ar[dr]& & X_2\ar[dl]&\dots& & X_{n-1}\ar[dr]\ar[rr,dashrightarrow]& & X_n\ar[dl]\\
& Z_1&&\dots & &&Z_{n-1} & \\
\end{tikzcd}
\]
	
	where the top row is a $(K_{X}+B)$-MMP with scaling of $A$.  Eventually we find that $K_{\widetilde{X}_n}+\widetilde{B}_n=\phi_n^*(K_{{X}_n}+{B}_n)$ is nef and so this LMMP has terminated.  The pair $(\widetilde{X}_n,\widetilde{B}_n)$ was obtained by running a $K_X+B$-MMP which terminated on $(\widetilde{X}_n,\widetilde{B}_n)$ and hence $(\widetilde{X}_n,\widetilde{B}_n)$ is a log minimal model for $(X,B)$. 
	
	\end{proof}

At this point, all of the results which were conditional on \autoref{thm:lmm_dw} holding in the $F$-finite case are unconditional.

\bibliographystyle{acm}
\bibliography{library}

\end{document}